\tikzset{dummy/.style= {circle,fill,draw,inner sep=0pt,minimum size=1.2mm}}
\tikzset{vertex/.style={fill, circle, minimum size=.1cm, inner sep=0pt}}
\numberwithin{equation}{section} 
\numberwithin{figure}{section}
\newcommand{\newrefformat}[2]{}
\newcommand\restr[2]{{
  \left.\kern-\nulldelimiterspace 
  #1 
  \vphantom{\big|} 
  \right|_{#2} 
  }}
\crefname{lemma}{Lemma}{Lemmas}
\crefname{theorem}{Theorem}{Theorems}
\crefname{definition}{Definition}{Definitions}
\crefname{proposition}{Proposition}{Propositions}
\crefname{remark}{Remark}{Remarks}
\crefname{observation}{Observation}{Observations}
\crefname{construction}{Construction}{Constructions}
\crefname{corollary}{Corollary}{Corollaries}
\crefname{question}{Question}{Questions}
\crefname{equation}{Equation}{Equations}
\crefname{construction}{Construction}{Constructions}
\crefname{ex}{Example}{Examples}
\crefname{appsec}{Appendix}{Appendices}
\crefname{subsection}{Subsection}{Subsections}
\Crefname{warning}{Warning}{Warnings}
\theoremstyle{plain}
\newtheorem{theorem}[equation]{Theorem}
\newtheorem{proposition}[equation]{Proposition}
\newtheorem{lemma}[equation]{Lemma}
\theoremstyle{definition}
\newtheorem{definition}[equation]{Definition}
\newtheorem{remark}[equation]{Remark}
\newtheorem{construction}[equation]{Construction}
\newtheorem{observation}[equation]{Observation}
\newcommand{\THH}{\mathrm{THH}}
\newcommand{\E}{\mathrm{E}}
\newcommand{\Tor}{\mathrm{Tor}}
\newcommand{\hocolim}{\mathrm{hocolim}}
\renewcommand{\k}{{{\Bbbk}}}
\newcommand{\Z}{{\mathbb Z}}
\newcommand{\Q}{{\mathbb Q}}
\newcommand{\F}{{\mathbb F}}
\newcommand{\bbL}{{\mathbb L}}
\newcommand{\B}{\mathfrak{B}}
\tikzset{
  altstackar/.style={decorate, decoration={show path construction,
    lineto code={
      \path (\tikzinputsegmentfirst); \pgfgetlastxy{\xstart}{\ystart}
      \path (\tikzinputsegmentlast); \pgfgetlastxy{\xend}{\yend}
      \path ($(0,0)!1.5pt!(\ystart-\yend,\xend-\xstart)$); \pgfgetlastxy{\xperp}{\yperp}
      \foreach \n[evaluate=\n as \k using .5*#1-\n+.5] in {1,...,#1}{
        \ifodd\n{\draw[->, shorten <=2pt, shift={($\k*(\xperp,\yperp)$)}](\xstart,\ystart)--(\xend,\yend);}
        \else{\draw[<-, shorten >=2pt, shift={($\k*(\xperp,\yperp)$)}](\xstart,\ystart)--(\xend,\yend);}\fi
      }
    }
  }}, altstackar/.default={1}
}
\def\slashedarrowfill@#1#2#3#4#5{%
  $\m@th\thickmuskip0mu\medmuskip\thickmuskip\thinmuskip\thickmuskip
  \relax#5#1\mkern-7mu%
  \cleaders\hbox{$#5\mkern-2mu#2\mkern-2mu$}\hfill
  \mathclap{#3}\mathclap{#2}%
  \cleaders\hbox{$#5\mkern-2mu#2\mkern-2mu$}\hfill
  \mkern-7mu#4$%
}
\def\rightslashedarrowfill@{%
  \slashedarrowfill@\relbar\relbar\mapstochar\rightarrow}
\newcommand\xslashedrightarrow[2][]{%
  \ext@arrow 0055{\rightslashedarrowfill@}{#1}{#2}}
\newcommand\bbS{\mathbb{S}}
\newcommand\X{\mathbb{Z}[[u_1]]_{\mathbb{Q}}}
\newcommand{\bTHH}{\overline{THH}}
\newcommand{\bbTHH}{\overline{\overline{THH}}}
\begin{document}

\author[Agarwal]{Sanjana Agarwal}
\address{Department of Mathematics, Indiana University Bloomington, 831 E Third St, Bloomington, IN 47405}
\email{sanjagar@iu.edu}

\title[$\THH$ of the Morava E-theory spectrum $E_2$]{$\THH$ of the Morava E-theory spectrum $E_2$}

\begin{abstract} 
The Morava $E$-theories, $E_{n}$, are complex-oriented $2$-periodic ring spectra, with homotopy groups $W_{\F_{p^{n}}}[[u_{1}, u_{2}, ... , u_{n-1}]][u,u^{-1}]$. Here $W$ denotes the ring of Witt vectors. $E_{n}$ is a Landweber exact spectrum and hence uniquely determined by its homotopy groups as $BP_{\ast}$-algebra. Algebraic $K$-theory of $E_{n}$ is a key ingredient towards analyzing the layers in the $p$-complete Waldhausen’s algebraic $K$-theory chromatic tower. One hopes to use the machinery of trace methods to get results towards algebraic $K$-theory once the computation for $THH(E_{n})$ is known. 

In this paper we describe $THH(E_{2})$ as part of consecutive chain of cofiber sequences where each cofiber sits in the next cofiber sequence and the first term of each cofiber sequence is describable completely in terms of suspensions and localizations of $E_{2}$.
For these results, we first calculate $K(i)$-homology of $THH(E_{2})$ using a Bökstedt spectral sequence and then lift the generating classes of $K(1)$-homology to fundamental classes in homotopy group of $THH(E_{2})$. These lifts allow us to construct terms of the cofiber sequence and explicitly understand how they map to $THH(E_{2})$. 
\end{abstract}

\maketitle
\tableofcontents

\section{Introduction}
The goal of this article is to study and describe the topological Hochschild homology of the second Morava $E$-theory spectrum, $THH(E_{2})$, in terms of the suspensions and localizations of $E_{2}$-spectrum (\cref{MainThm}). After $p$-completion, we are able to give a full description of $THH(E_{2})^{\wedge}_p$ in these terms as follows.

\begin{theorem}[\cref{MainCorr}]
We have the following diagram of $(E_2)^{\wedge}_p$-modules for $THH(E_{2})_{p}^{\wedge}$, where $(C_{f_{i}})_{p}^{\wedge}$ are the cofiber maps of $(f_{i})_{p}^{\wedge}$
\[\begin{tikzcd}
(E_{2})_{p}^{\wedge} \arrow{r}{(f_{1})_{p}^{\wedge}:= \text{p-completed unit map}}[swap]{\eqref{unit map}} & THH(E_{2})_{p}^{\wedge} \arrow{d}{(C_{f_{1}})_{p}^{\wedge}}\\
\Sigma^{2p-1}L_{1}(E_{2})_{p}^{\wedge} \arrow{r}{(f_{2})_{p}^{\wedge}:=(\overline{\jmath_{1}})_{p}^{\wedge}}[swap]{\eqref{Define X_2 and f_2}} & \overline{THH}(E_{2})_{p}^{\wedge} \arrow{d}{(C_{f_{2}})_{p}^{\wedge}}\\
(\bigvee_{\alpha} (\Sigma^{|\alpha|}L_{1}E_{2}[u_{1}^{-1}] \bigvee \Sigma^{|\alpha|+2p-1}L_{1}E_{2}[u_{1}^{-1}]))_{p}^{\wedge}\arrow[r, phantom, sloped, "\simeq"]&\bbTHH(E_2)_{p}^{\wedge}.
\end{tikzcd}\]
\end{theorem}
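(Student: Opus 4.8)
The plan is to deduce the $p$-complete statement from the integral result \cref{MainThm} by applying $(-)_p^\wedge$, which preserves cofiber sequences; the substance is therefore the construction of the maps $f_1,f_2$ and the identification of the successive cofibers. The computational engine is the Bökstedt spectral sequence
\[
E^2_{**}=\HH_*^{K(i)_*E_2}\!\left(K(i)_*E_2\right)\Longrightarrow K(i)_*\THH(E_2),
\]
which I would run for $i=0,1,2$. First I would compute the input $K(i)_*E_2$ as a graded-commutative $K(i)_*$-algebra from Landweber exactness and the complex orientation: since $\pi_*E_2=W(\F_{p^2})[[u_1]][u^{\pm1}]$ with $|u_1|=0$ and the Hazewinkel generator $v_1$ in degree $2p-2$, the $K(1)$-homology records the height-one data while the $K(2)$-homology is essentially the completed base ring. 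Feeding this into the standard Hochschild homology formulas---polynomial generators contributing exterior suspension classes, exterior generators contributing divided powers---produces $E^2$; I would then argue collapse for degree reasons and resolve the remaining multiplicative extensions to read off $K(i)_*\THH(E_2)$.

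Second, I would isolate the distinguished generators. The unit $E_2\to\THH(E_2)$ accounts for the bottom class, so that its cofiber $\bTHH(E_2)$ is spanned by the suspension classes $\sigma(-)$; among these the degree $2p-1$ class $\lambda_1$---the analogue of the first Ausoni--Rognes fundamental class, arising as the suspension of the degree $2p-2$ generator---plays the central role. The crucial step is to \emph{lift} the $K(1)$-homology generators to honest homotopy classes in $\pi_*\THH(E_2)$. I would construct these as maps out of suspensions and localizations of $E_2$ and detect their Hurewicz images in $K(1)_*\THH(E_2)$; multiplicativity of $\THH$ and the $E_2$-module structure reduce the construction to the single class $\lambda_1$, the remaining generators being obtained by multiplication.

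Third, taking $f_1$ to be the unit map and $f_2=\overline{\jmath_1}\colon\Sigma^{2p-1}L_1(E_2)\to\bTHH(E_2)$ the map classified by $\lambda_1$, I would form the iterated cofibers and identify them with $\bTHH(E_2)$ and $\bbTHH(E_2)$, checking each identification on $K(i)$-homology for the relevant $i$ and concluding from the fact that the spectra in sight are built from $E_2$-modules, so that a Morava $K$-theory equivalence is an equivalence. The localization $L_1$ and the inversion $[u_1^{-1}]$ enter precisely because, after quotienting by the bottom cell and by the $\lambda_1$-multiple, the surviving classes are seen only $K(1)$-locally and live in the $u_1$-inverted range. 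The final wedge decomposition of $\bbTHH(E_2)$ would then be produced by building a section summand-by-summand over the monomial basis $\{\alpha\}$ of the residual Hochschild generators, the two pieces $\Sigma^{|\alpha|}L_1E_2[u_1^{-1}]$ and $\Sigma^{|\alpha|+2p-1}L_1E_2[u_1^{-1}]$ corresponding to the factors $1$ and $\lambda_1$ multiplying $\alpha$.

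The main obstacle is this lifting-and-splitting passage from algebra to homotopy. Detecting a class in $K(i)$-homology is routine, but showing it lifts to an actual homotopy class---rather than only to an associated-graded representative---and that the resulting cofibers genuinely \emph{split} as a wedge, rather than assembling into a nontrivial extension, requires control of the relevant attaching maps. I expect this to follow from the $E_2$-module structure trivializing the pertinent mapping spectra, together with sparseness of $K(i)_*\THH(E_2)$ to annihilate the obstruction groups; once the integral splitting is in place, the $p$-completion in the final statement is formal.
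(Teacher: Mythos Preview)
Your broad outline---compute $K(i)_*\THH(E_2)$ via the B\"okstedt spectral sequence, lift the generating classes to $\pi_*\THH(E_2)$, build the maps $f_1,f_2$ and the wedge map from these lifts, then $p$-complete---matches the paper. (A minor slip: the $E^2$-page is $\HH^{K(i)_*}_{*,*}(K(i)_*E_2)$, Hochschild homology over $K(i)_*$, not over $K(i)_*E_2$; and the input for the power-series variable $u_1$ is not the ``polynomial $\Rightarrow$ exterior'' heuristic but the $H$-smoothness calculation $\HH_*^{\F_p}\F_p[[u_1]]$ of Larsen--Lindenstrauss.) The genuine gap is in your last step.

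You propose to establish an \emph{integral} equivalence between $\bbTHH(E_2)$ and the wedge by a splitting/section argument, and then regard the $p$-completion as a formality. The paper proves no such integral equivalence---it explicitly conjectures that the cofiber sequences beyond the unit map do \emph{not} split. Your stated mechanism, ``for $E_2$-modules a Morava $K$-theory equivalence is an equivalence,'' is false: an $L_1$-local spectrum with vanishing $K(1)_*$ is merely rational, not contractible, and there is no sparseness on the $K(0)$-side to kill obstructions (indeed $K(0)_*\THH(E_2)$ involves the enormous $\X$-module $\HH_1^\Q(\X)$). What the paper actually does is: (i) use the $K(2)$-equivalence of the unit map to see that $\bTHH(E_2)$, and hence $\bbTHH(E_2)$, is $L_1$-local; (ii) build the map $\overline{Q}\colon X_3[u_1^{-1}]\to\bbTHH(E_2)[u_1^{-1}]$ from the lifted classes and check it is a $K(1)$-isomorphism \emph{only}; (iii) deduce that its cofiber $C_{\overline Q}$, and likewise the cofiber $C_\infty$ of $\bbTHH(E_2)\to\bbTHH(E_2)[u_1^{-1}]$, are rational; (iv) $p$-complete, which annihilates $C_{\overline Q}$ and $C_\infty$ and yields the displayed equivalence. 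So the $p$-completion is not cosmetic---it is exactly the device that disposes of the uncontrolled rational remainder. Note also that the inversion $[u_1^{-1}]$ on the wedge side is forced, not incidental: since $\HH_1^{\F_p[u_1]}\F_p[[u_1]]$ is an $\F_p((u_1))$-vector space, $K(1)_*\bbTHH(E_2)$ is already a $K(1)_*E_2[u_1^{-1}]$-module, so no map from suspensions of $L_1E_2$ without $u_1$ inverted can be a $K(1)$-isomorphism.
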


Here $\bTHH(E_2)$ is the cofiber of the unit map $f_1$ and $\bbTHH(E_2)$ is the cofiber of the map of $E_2$-modules, $f_2$. Both cofibers are $L_1$-local where $L_1$ denotes the Bousfield localization with respect to the first Johnson-Wilson theory $E(1)$. For an $E_2$-module $M$, by $M[u_1^{-1}]$ we mean the spectrum $$\hocolim(M\xrightarrow{u_{1}}M\xrightarrow{u_{1}}M\xrightarrow{u_{1}}\cdots)$$ where the $u_{1}$-multiplication is defined by the $E_{2}$-module structure of $M$. See \S5.3 for details on indexing, $\alpha$.

The cofiber sequences in the diagram above, are possible to construct due to the homotopy classes of $THH(E_2)$ that we are able to lift from $K(i)$-homology classes of $THH(E_2)$ along the Hurewicz map. Here $K(i)$ is the $i$th Morava $K$-theory. Computing $K(i)_{\ast}THH(E_2)$ is crucial to our procedure and we show the following:

\begin{theorem}[\cref{K0 homology of THHE_2}, \cref{K(1) of THH(E2)}, \cref{K(2) of THH(E2)}, \cref{higher i}] The spectrum $THH(E_2)$ is $L_2$-local. And, we have following isomorphisms of $K(0)_{\ast}E_2$, $K(1)_{\ast}E_2$ and $K(2)_{\ast}E_2$-algebras, respectively
\begin{align*}
K(0)_{\ast}THH(E_{2}) & \cong (K(0)_{\ast}E_{2}) \otimes_{\X} (\Lambda_{\X}HH_{1}^{\Q}(\X)) \otimes_{\Q} (\Lambda_{\Q}du) \\
K(1)_{\ast}THH(E_{2}) & \cong (K(1)_{\ast}E_{2}) \otimes_{{F}_{p}[[u_{1}]]} (\Lambda_{\mathbb{F}_{p}[[u_{1}]]}HH_{1}^{\F_{p}[u_{1}]}\F_{p}[[u_{1}]]) \otimes_{\F_{p}} (\Lambda_{\F_{p}} dt_{1})\\
K(2)_{\ast} THH (E_{2}) & \cong K(2)_{\ast}E_{2}
\end{align*}
where $\X:= \mathbb{Z}_{p}[[u_{1}]] \otimes_{\Z} \Q$ and $du, dt_1$ are as in \eqref{du classes},\eqref{dt_1 class}.
\end{theorem}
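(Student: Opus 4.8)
The natural engine for this computation is the B\"okstedt spectral sequence. Each Morava $K$-theory $K(i)$ is a field spectrum, so the K\"unneth isomorphism holds and the smash-product description of $THH$ yields a multiplicative, conditionally (in fact strongly) convergent spectral sequence
\[
E^2_{s,t} = HH_s^{K(i)_*}(K(i)_* E_2)_t \;\Longrightarrow\; K(i)_{s+t}\, THH(E_2).
\]
The plan is therefore, for each $i\in\{0,1,2\}$, to (a) identify the input $K(i)_* E_2$ as a $K(i)_*$-algebra, (b) compute its Hochschild homology, and (c) prove the spectral sequence collapses and resolve the extension problems; a separate, structural argument then disposes of $L_2$-locality and the vanishing for $i \geq 3$.

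For (a) I would begin from $\pi_* E_2 = W_{\F_{p^2}}[[u_1]][u^{\pm 1}]$ and track how the chromatic height drops under $K(i)$. Rationally ($i=0$) this is just base change to $\Q$, giving a Laurent algebra built from $\X$ and the unramified Witt factor, with periodicity unit $u$. For $i=1$ one reduces mod $p$ and inverts $v_1$, producing an $\F_p[[u_1]]$-flavoured algebra in which a class $t_1$ (of Milnor/Hopf-algebroid type) appears. For $i=2=n$ the formal group has height exactly $2$, and the resulting ring is (pro-)\'etale over $K(2)_*$. In each case the upshot is that $K(i)_* E_2$ is, up to the unramified/\'etale part, a completion of a smooth algebra over the relevant base field, decorated by the unit $u$ and, in positive height, by $t_1$.

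For (b) I would invoke a Hochschild--Kostant--Rosenberg identification: Hochschild homology of a smooth algebra is the exterior algebra on its K\"ahler differentials. Over $\Q$ this is exact and splits off $\Lambda_{\X} HH_1^{\Q}(\X)$ (the $u_1$-direction) and $\Lambda_{\Q}\, du$ (the logarithmic differential of the periodicity unit). The genuinely delicate point is that the inputs are \emph{completed} rings, so the naive module of absolute differentials is pathological; what the $E^2$-page actually records is a continuous/relative Hochschild homology, which is why the $i=1$ answer is phrased through the relative object $HH_*^{\F_p[u_1]}(\F_p[[u_1]])$ and only the geometric directions $du_1$, $du$, $dt_1$ survive while the unramified base contributes nothing. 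For $i=2$, \'etaleness forces $\Omega^1=0$, so the Hochschild homology is concentrated in degree $0$ and equals $K(2)_* E_2$.

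For (c), collapse should follow from multiplicativity: the $E^2$-page is generated as an algebra by $HH_0 = K(i)_* E_2$ together with the exterior classes ($du$, $dt_1$) in filtration $s=1$, which are suspensions of permanent cycles and hence themselves permanent, so multiplicativity forces every differential to vanish; the remaining extension problems are then controlled by the exterior structure and freeness over $K(i)_*$. Finally, $L_2$-locality is cleanest structurally: $L_2$ is smashing and $E_2$ is $K(2)$-local (hence $L_2$-local), so every $E_2$-module, in particular the $E_2$-algebra $THH(E_2)$, is $L_2$-local; correspondingly $K(i)_* E_2 = 0$ for $i\geq 3$ makes the Bökstedt input vanish, giving $K(i)_* THH(E_2)=0$. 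I expect the main obstacle to be step (b) in positive height, namely pinning down the completed inputs $K(1)_*E_2$ and $K(2)_* E_2$ precisely and proving that their B\"okstedt $E^2$-pages are the continuous/relative Hochschild homologies claimed, so that the absolute-differential pathology is avoided, and then tracking the class $dt_1$ faithfully through the collapse.
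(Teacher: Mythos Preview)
Your overall architecture (B\"okstedt spectral sequence, compute the $E^2$-page, collapse on degree grounds) matches the paper, and your $L_2$-locality argument via ``$L_2$ is smashing and $THH(E_2)$ is an $E_2$-module'' is cleaner than the paper's. But there is a genuine conceptual gap in step~(b).

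You write that ``what the $E^2$-page actually records is a continuous/relative Hochschild homology'' and that this is how the absolute-differential pathology of completed rings is avoided. This is false: the B\"okstedt $E^2$-page is the ordinary, discrete $HH^{K(i)_*}_{*}(K(i)_*E_2)$, and the pathology is \emph{present in the answer}. The term $HH_1^{\F_p[u_1]}\F_p[[u_1]]$ is not a single class ``$du_1$'' but an $\F_p((u_1))$-vector space of continuum dimension (Larsen--Lindenstrauss). So your sentence ``only the geometric directions $du_1$, $du$, $dt_1$ survive'' misreads the target: for $i=1$ there is an enormous exterior algebra coming from the completion, and for $i=0$ the same phenomenon occurs inside $HH_1^{\Q}(\X)$. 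The relative object $HH^{\F_p[u_1]}_*(\F_p[[u_1]])$ enters not because the spectral sequence is secretly continuous, but because Larsen--Lindenstrauss prove that completions of excellent rings are \emph{$H$-smooth}, so the discrete Hochschild homology of $\F_p[[u_1]]$ over $\F_p$ factors (non-canonically) as $HH^{\F_p}_*\F_p[u_1]\otimes_{\F_p[u_1]}\Lambda_{\F_p[[u_1]]}HH_1^{\F_p[u_1]}\F_p[[u_1]]$. You need to invoke this technology explicitly; a naive HKR argument will not produce the correct $E^2$-page.

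A second gap is the route to $i=1,2$. The paper does not compute $HH^{K(i)_*}_*(K(i)_*E_2)$ directly from a presentation of $K(i)_*E_2$. Instead it writes $K(i)_*E_2\cong K(i)_*E(2)\otimes_{E(2)_*/p}E_{2*}/p$ via Landweber exactness, proves a base-change theorem of the form $HH^R_*(A\otimes_S B)\cong HH^R_*(A)\otimes_{HH^k_*S}HH^k_*B$ under a flatness hypothesis, verifies that $HH^{\F_p}_*(E_{2*}/p)$ is flat over $HH^{\F_p}_*(E(2)_*/p)$ (again using $H$-smoothness), and then inputs the Ausoni--Richter computation of $HH^{K(i)_*}_*(K(i)_*E(2))$. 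This is where the single exterior class $dt_1$ (and not $du$) appears for $i=1$. Your sketch does not indicate this reduction. For $i=2$ the paper does not use any (pro-)\'etaleness claim; rather, after the base-change one sees that $u_1$ acts by zero on $K(2)_*E_2$ while $HH_1^{\F_p[u_1]}\F_p[[u_1]]$ is an $\F_p((u_1))$-module, so their tensor product over $\F_p[[u_1]]$ vanishes, collapsing everything to degree zero.
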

Here, for $M$ an $R$-module, $\Lambda_{R}M$ is the (graded) exterior $R$-algebra on $M$. And, $\Lambda_{\Q}du:= \Lambda_{\Q}\Q\langle du\rangle$, $\Lambda_{\F_p}dt_1:= \Lambda_{\F_p}\F_p\langle dt_1\rangle$.

In this section, we give some context to the question we address in this paper and comment on some unanswered questions and future directions. For terminologies and notations used in this paper, please look at \cref{Notations}.

\subsection{Motivation}
The 2002 paper of Ausoni and Rognes \cite{10.1007/BF02392794} initiated a program to understand algebraic $K$-theory of ring spectra and its relationship to chromatic phenomena. We begin with a brief outline of this program as it has served as one of the big motivations behind many of the calculations in this area over the last 20 years.

In algebraic number theory, the arithmetic of the ring of integers in a number field is largely captured by its Picard group, unit group, etc. These groups are closely related to the algebraic $K$-theory groups associated to this ring: $K_{0}(R)$, $K_{1}(R)$, respectively. Thus, algebraic $K$-theory encodes extremely rich information about the arithmetic structures of rings within itself. 

There is a symmetric monoidal functor from the category of rings to the category of spectra
\begin{align*}
  H(-): \mathscr{R}ing & \rightarrow \mathscr{S}p\\
    R & \mapsto HR
\end{align*}
where $HR$ is the Eilenberg-MacLane spectrum associated to $R$.
 Since we can extend the algebraic $K$-theory functor to ring spectra (see for example \cite[Chapter 6]{MR1417719}), a natural question to ask is: what structural information about ring spectra is encoded by algebraic $K$-theory. Waldhausen in \cite{MR802796} showed that algebraic $K$-theory for ring spectra, particularly the sphere spectrum $\bbS$, is related to high dimensional differential topology. Thus, understanding $K(\bbS)$ is an important and relevant question for many subjects.
 
Work of Dundas, Goodwillie, McCarthy \cite[Theorem 0.0.2]{MR3013261} and Hesselholt, Madsen \cite[Theorem B.1]{MR1410465} shows that the square 
\[ \begin{tikzcd}
K(\bbS) \arrow{r} \arrow{d} & K(\bbS_{p}) \arrow{d}\\
K(\pi_{0}\bbS) \arrow{r} & K(\pi_{0}\bbS_{p})
\end{tikzcd} \]
is homotopy cartesian after $p$-completion ($\bbS_{p}$ is the p-complete sphere spectrum). So up to $p$-completion, it is enough to understand $K(\bbS_{p})$. $\bbS_{p}$ is the homotopy limit of the chromatic tower 
\[ 
\cdots \rightarrow L_{n}\bbS_{p} \rightarrow \cdots \rightarrow L_{1}\bbS_{p} \rightarrow L_{0}\bbS_{p} = H\Q_{p} \]
where $L_{n}$ denotes the Bousfield localization with respect to the $n$th Johnson-Wilson theory $E(n)$ (see \cref{Notations}\eqref{localization},\eqref{Johnson-Wilson theory}). The Hopkins-Ravenel chromatic convergence theorem implies that the map
\[ \bbS_{p} \overset{\simeq} \longrightarrow \text{holim}_{n} \ L_{n}\bbS_{p} \]
is a weak equivalence. It also induces a tower on algebraic $K$-theory
\[ 
\cdots \rightarrow K(L_{n}\bbS_{p}) \rightarrow \cdots \rightarrow K(L_{1}\bbS_{p}) \rightarrow K(L_{0}\bbS_{p}) = K(\Q_{p}) \]
with a map 
$$K(\bbS_{p}) \rightarrow \text{holim}_{n} \ K(L_{n}\bbS_{p}).$$
Waldhausen \cite{10.1007/BFb0075567} conjectured that this map should be a weak equivalence. Waldhausen's work also suggests that there should be an algebraic $K$-theoretic interpretation of the fibers of the maps in this sequence: the fibers should be closely related to $K(L_{K(n)}\bbS_{p})$, where $L_{K(n)}$ is Bousfield localization with respect to $n$th Morava $K$-theory $K(n)$ (see \cref{Notations}\eqref{localization},\eqref{Morava K-theory}). 

From the work of Devinatz-Hopkins \cite{MR2030586}, $L_{K(n)}\bbS_{p}$ is closely related to $E_n$, the $n$th Morava $E$-theory (see \cref{Notations}\eqref{Morava E-theory}) as  
$$L_{K(n)}\bbS_{p} \simeq E_{n}^{h\mathbb{G}_{n}}$$
where $$\mathbb{G}_{n}:= \mathbb{S}_{n} \rtimes C_{n}$$ is the semidirect product of $n$th profinite Morava stabilizer group and the cyclic group of order $n$. Based on this, Rognes conjectured that $$K(L_{K(n)}\bbS_{p}) \rightarrow K(E_{n})^{h\mathbb{G}_{n}}$$
is a `nice' map in sufficiently high dimensions: in particular up to smashing with a finite $p$-local $CW$-spectrum of chromatic type $n+1$, this map is a weak equivalence. Hence, in \cite{10.1007/BF02392794}, the authors lay out a plan to analyze $K(E_{n})$. The theme is to construct ``localization sequences" (cofibration sequences analogous to Quillen's algebraic $K$-theory localization sequence \cite{zbMATH03457106}) in algebraic $K$-theory of connective commutative $\bbS$-algebras and then use the machinery of trace methods \cite{Survey} to compute things using topological cyclic homology, $TC$.

The localization sequences conjectured in \cite{10.1007/BF02392794} towards this project were proven to not hold in \cite{MR3760300}. Blumberg and Mandell in \cite{MR4096617} prove the following localization sequences in $K$-theory, $TC$, and topological Hochschild homology $THH$
\begin{equation}\label{localization sequences}
    \begin{aligned}
      \cdots \rightarrow K(W\F_{p^{n}}[[u_{1}, \cdots, u_{n-1}]]) \rightarrow & K(BP_{n}) \rightarrow K(E_{n}) \rightarrow \Sigma \cdots  \\
      \cdots \rightarrow TC(W\F_{p^{n}}[[u_{1}, \cdots, u_{n-1}]]) \rightarrow T&C(BP_{n}) \rightarrow TC(BP_{n}|E_{n}) \rightarrow \Sigma \cdots
      \\
      \cdots \rightarrow THH(W\F_{p^{n}}[[u_{1}, \cdots, u_{n-1}]]) \rightarrow T&HH(BP_{n}) \rightarrow THH(BP_{n}|E_{n}) \rightarrow \Sigma \cdots
    \end{aligned}
\end{equation}
where $W$ denotes the ring of $p$-typical Witt vectors, and $BP_{n}$ denotes the connective cover of $E_{n}$. This is exceptional in the sense that this provides us with tools for calculating algebraic $K$-theory of non-connective ring spectra using localization sequences. 

To implement the machineries of trace methods and localization sequences towards understanding the relationship between algebraic $K$-theory and the chromatic phenomenon, we therefore, first need to know $THH$ of various spectra involved. The topological Hochschild homologies (and algebraic $K$-theories) of various complex-oriented Landweber theories are thus extremely useful, but not yet very well known except in few cases \cite{MR1209233}, \cite{10.1007/BF02392794}, \cite{Ausoni_2010}, \cite{Ausoni_2012}, \cite{Ausoni_2012Second}, \cite{stonek2020highertopologicalhochschildhomology}. In \cite{MR4071375}, Ausoni and Richter make progress towards calculating $THH(E(n))$ but these are under commutativity assumptions on $E(n)$ spectra. Recent developments in the field (for example \cite{burklund2022chromaticnullstellensatz}, \cite{hahn2022redshiftmultiplicationtruncatedbrownpeterson}, \cite{burklund2023ktheoreticcounterexamplesravenelstelescope}) have given us big insights into this program but explicit structure for algebraic $K$-theory, $TC$, and $THH$ of $BP_n$ and $E_n$ remain completely unknown and interesting for $n\geq 2$.

\subsection{Future questions and remarks}
The computations of this paper result in some further questions:

\begin{itemize}[leftmargin=*]
\item The methodology used for the computation in this paper does not currently extend to higher Morava $E$-theories $E_n, \ n\geq 2$. This is due to the fact that the computations of Hochschild homology of rings $$k[[x_1,\ldots,x_n]]$$ ($k$ any field) remain, to our knowledge, unknown for $n \geq 2$. The methods of \cite{MR1853116}, where $$HH_{\ast}^k(k[[x]])$$ is calculated, do not extend to these rings. Hochschild homology of these rings are of interest to us. 
\item In moving from $E(2)$ to $E_2$, one adds a lot of nice structure. But in terms of homotopy groups for us, it means working with power series rings instead of polynomial rings. Taming the power series ring somehow is necessary to make the result stronger. For this reason, continuous $THH$ as defined in Efimov's recent work \cite{efimov2025localizinginvariantsinverselimits} seems to be an appropriate invariant to work with. In upcoming work with Noah Wisdom, we explore some computations in this direction.
\item The computations of this paper can be useful in the broader Rognes program due to the relationship
\[
\begin{tikzcd}
THH(BP_{n}) \arrow{d} &\\
THH(BP_{n})[u^{-1}] \arrow{r}{\simeq} & THH(E_{n})
\end{tikzcd}
\]
and the fact that $THH(BP_{n})$ and $TC(BP_{n})$ are crucial in the localization sequences \eqref{localization sequences} towards understanding $K(E_{n})$. If we can lift the classes of $THH(E_2)$ to $THH(BP_2)$, we should be able to compute the cofiber $THH(BP_2|E_2)$ since we understand $THH(W\F_{p^{2}}[[u_{1}]])$ from \cite{MR1853116}. Note that in general understanding $THH(W\F_{p^{n}}[[u_{1}, \cdots, u_{n-1}]])$ might require the first bullet point above.
\item Given the understanding of classes $\pi_{\ast}THH(E_2)$ in this paper, one can ask if its possible to understand and say something about the cyclotomic structure of $THH(E_2)$. We have been unable to say anything in that direction yet.
\item We are unable to show complete splitting of cofiber diagrams in \cref{MainThm} and \cref{MainCorr}. In fact, we conjecture that these sequences do not split but we do not have a proof so far. 
\end{itemize}

\subsection{Some terminology and notation}\label{Notations}\indent
\begin{enumerate}[leftmargin=*]
    \item \label{Morava K-theory} We denote the \textit{$i$th Morava $K$-theory spectrum} as $K(i)$. These are complex oriented (but not Landweber exact) spectra with $p$-typical formal group laws and coefficient rings  $$K(i)_{\ast} = \F_p[v_{i}, v_{i}^{-1}] \text{, where } |v_i|=2(p^i-1).$$ 
    \item \label{Johnson-Wilson theory} We denote the \textit{$n$th Johnson-Wilson theory spectrum} as $E(n)$. These are complex oriented spectra with $p$-typical formal group laws and coefficient rings $$E(n)_{\ast} = \Z_{(p)}[v_{1}, \cdots, v_{n}, v_{n}^{-1}] \text{, where } |v_i|=2(p^i-1).$$ They are Landweber exact and hence determined by their coefficient rings as $BP_{\ast}$-algebras. 
    \item \label{Morava E-theory} We denote the \textit{$n$th Morava $E$-theory spectrum} as $E_n$. These are complex oriented spectra with $p$-typical formal group laws and coefficient rings $$E_{n \ast} \cong W\F_{p^{n}}[[u_{1}, \cdots\\, u_{n-1}]][u,u^{-1}] \text{, where } |u|=2, |u_i|=0.$$
     They are Landweber exact and hence determined by their coefficient rings as $BP_{\ast}$-algebras. They are $E_{\infty}$-ring spectra. More details on $BP_{\ast}$-algebra structure is discussed in \cref{Classes}.
    \item\label{localization} We denote by $L_n$, the Bousfield localization with respect to $E(n)$ and by $L_{K(n)}$, the Bousfield localization with respect to $K(n)$. Note that $E_n$ is an $L_n$-local spectrum.
    \item\label{HH,THH} For $k$ a commutative ring and $A$ a $k$-algebra, $N^{cy}_k(A)$ denotes the cyclic bar construction of $A$ over $k$. We also call this the Hochschild complex. The homotopy groups of this simplicial abelian group or the homology groups of the associated chain complex is denoted by $HH_{\ast}^k(A)$, the Hochschild homology of $A$ relative to $k$. Topological Hochschild homology, denoted by $THH$, is the topological analogue of Hochschild homology in the category of spectra $(Sp,\bbS,\wedge)$. It is a simplicial spectrum with homotopy groups denoted as $THH_\ast$. For more details see \cite{Survey},\cite{krause2018lectures}.  
    \item \label{field of fractions} $\Q_p$ denotes the $p$-complete rationals. For a field $k$, $k(t)$ denotes the field of fractions of the polynomial ring $k[t]$, $k[[t]]$ represents the ring of formal power series with coefficients in $k$ and with associated field of fractions denoted as $k((t))$.    
 \end{enumerate}

\subsection*{Outline}
In \cref{Prelims}, we give some preliminary tools and results we need for the calculations in this paper. \cref{K(i) of THHE2} is concerned with computations of the $K(i)$-homologies of $THH(E_2)$. For this, we first compute $HH_{\ast, \ast}^{K(i)_{\ast}}K(i)_{\ast}E_2$ and then use the Bökstedt spectral sequence. Next we lift these homology classes along the Hurewicz map to $\pi_{\ast}THH(E_2)$ in \cref{Lifting classes}. Finally, using the lifted classes in $\pi_{\ast}THH(E_2)$, we construct various cofiber sequences to prove our main result in \cref{Main results}.

\subsection*{Acknowledgements}
This paper would not have been possible without the many insightful conversations and the guidance provided by Michael Mandell. We are also grateful to David Mehrle, Srikanth B. Iyengar, Birgit Richter, Hari Rau-Murthy, Allen Yuan, Nat Stapleton and Noah Riggenbach for helpful discussions. We thank Ayelet Lindenstrauss and the anonymous reviewer for numerous comments and suggestions on the exposition. Finally, we would like to thank Kate Ponto for her support and encouragement.

\section{Preliminaries}\label{Prelims}

In this section, we review some preliminary mathematical results needed in the rest of the paper. These results concern the unit map from a ring to its topological Hoschschild homology (\cref{section unit map}), particular homotopy and Hochschild homology classes of relevant complex oriented spectra (\cref{Classes}), the Bökstedt spectral sequence (\cref{BSS}), the Hochschild homology of power series rings (\cref{HH of H-smooth}), and some results of~\cite{MR4071375} on the Hochschild homology of $K(i)_*E(n)$ (\cref{AR calculations}). We include these to make the paper more self-contained; a reader familiar with this material can skip this section and consult it as needed in the later sections.

\subsection{Unit map} \label{section unit map}
There is a unit map (of $E_{\infty}$-ring spectra)
\begin{equation}\label{unit map} 
   E_2 \to THH(E_2)
\end{equation}
 which we denote by $f_1$ for reasons that will be apparent in \cref{Main results}. This map is induced from the map of ring spectrum 
 $$\bbS \rightarrow THH(E_{2})$$ (which represents the unit element in $\pi_{\ast}THH(E_{2})$) after taking smash product with $E_{2}$ and using the $E_{2}$-module structure on $THH(E_{2})$. Since $E_\infty$-ring structures can be rigidified to commutative ring structures \cite[II.3]{MR1417719}, $THH(E_2)$ is modeled as the geometric realization of a simplicial spectrum and the unit map can also be seen as the inclusion of the $1$-skeleton. 
 
 Since $E_2$ is a commutative ring spectrum, there is a map of commutative $E_2$-algebra spectra in the other direction $$THH(E_2) \rightarrow E_2$$
 given by the multiplication maps $E_2^{\wedge n+1} \to E_2$. The composite map
 $$E_2 \to THH(E_2) \to E_2$$
 is identity and thus we have a splitting of $E_{2}$-modules 
    \begin{equation}\label{splitting of unit map}
        THH(E_2) \simeq E_2 \vee \overline{THH}(E_2)
    \end{equation}
    where $\overline{THH}(E_2)$ denotes the cofiber of \eqref{unit map}. The cofiber inherits the structure of a non-unital commutative $E_2$-algebra. 

\subsection{Classes}\label{Classes}
Throughout this paper, we work with various key homological classes that arise out of the Brown-Peterson spectrum $BP$, $BP_{\ast}BP$-theory and maps of ring spectra 
\begin{equation}\label{BP maps}
\begin{aligned}
&BP \rightarrow K(n),   \\ 
&BP \rightarrow E(m) \rightarrow E_{m}
\end{aligned}
\end{equation}
(for any $n, m$).  We use these maps without further comment to make $K(n)_{\ast}$, $E(m)_{\ast}$, and $E_{m\ast}$ into $BP_{\ast}$-modules and $E_{m\ast}$ into an $E(m)_{\ast}$-module.
More details on these spectra, these maps, and the classes defined below can be found in \cite{MR860042}.

$E(m)$ and $E_{m}$ have canonical choices of formal group laws, i.e., canonical ring maps from $BP$. Thus, there are canonical classes 
\begin{equation}\label{v classes 1}
   v_{i} \in \pi_{\ast}E
\end{equation}
for $E = E(m)$ or $E_m$ which are the images of the classes $v_{i}$ in $BP_{\ast}$. Also, from \eqref{BP maps} there are classes  
\begin{align}
 v_i & \in  \pi_{\ast}(E \wedge E) \cong E_{\ast}E, \label{v classes 2}\\
 t_j & \in \pi_{\ast}(E \wedge E) \cong E_{\ast}E \label{t classes 1}
\end{align}
which are the image classes of $$BP_*BP\cong \Z_{(p)}[v_i,t_j] \to E_{\ast}E.$$
These $v_{i}$s are the left Hurewicz image, $\eta_{L}(v_{i})$.
Note that $|t_j|=|v_j| = 2(p^j-1)$, $t_j$ in $BP_{\ast}BP$ is the image of $v_j\in BP_\ast$ under the right unit map $\eta_R$ and $v_i$ in $BP_{\ast}BP$ is the image of $v_i\in BP_\ast$ under the left unit map $\eta_L$. In the same vein, we have
\begin{align} 
    v_i & \in K(n)_{\ast}E, \label{v classes 3}\\
    t_{j} & \in K(n)_{\ast}E \label{t classes 2}
\end{align}
where \eqref{v classes 3} arises from the image $\eta_{L}(v_{i})$ of $K(n)$.

There is a Hurewicz map 
\[\pi_{\ast}(E \wedge E)\to K(n)_{\ast}(E \wedge E) \cong K(n)_{\ast}E \otimes_{K(n)_{\ast}} K(n)_{\ast}E\]
where the isomorphism holds since $K(n)$ satisfies the K\"unneth theorem. 
The image of the class $t_j$ from \eqref{t classes 1} under this map can be found by looking at the corresponding Hurewicz map for $BP$,
\[ \pi_\ast(BP\wedge BP)\to BP_\ast(BP\wedge BP) \cong BP_\ast BP \otimes_{BP_\ast} BP_\ast BP.\]
Here, the isomorphism holds since $BP_\ast BP$ is flat over $BP_\ast$.
Standard formulae for the Hopf algebroid diagonal and antipode~\cite[A2.1.27]{MR860042} then in principle calculate the image of $t_j$ for all $j$.  In the case $j=1$, we get the image class to be
$$t_1\otimes 1+1\otimes t_1.$$
Thus,
\begin{equation}\label{t_1 class}
    t_1 \in \pi_{\ast}(E \wedge E) \mapsto t_1\otimes 1+1\otimes t_1 \in  K(n)_{\ast}E \otimes_{K(n)_{\ast}} K(n)_{\ast}E.
\end{equation}
The Hochschild complex of $K(n)_{\ast}E$ over $K(n)_{\ast}$ has 
$$K(n)_{\ast}E \otimes_{K(n)_{\ast}} K(n)_{\ast}E$$
as the simplicial degree $1$ term. As an element here, $t_1\otimes 1$ is a degenerate class (it is $s_0(t_1)$ where $s_0$ is the zeroth degeneracy map of the Hochschild complex and $t_1\in K(n)_{\ast}E$ is a simplicial degree $0$ element). Therefore, as classes in the Hochschild complex of $K(n)_{\ast}E$ over $K(n)_{\ast}$ 
\begin{equation}\label{degenerate class}
    t_1\otimes 1+1\otimes t_1 = 1\otimes t_1.
\end{equation}
The class $1\otimes t_1$ is in the kernel of difference of face maps $d_0-d_1$ going down from simplicial level $1$ to simplicial level $0$ in the Hochschild complex. Therefore, it is an element of the first Hochschild homology:
\begin{equation}\label{dt_1 class}
    dt_1\in HH_{(1,2p-2)}^{K(n)_{\ast}}K(n)_{\ast}E
\end{equation} denotes this homological class which is the image of $t_1\in \pi_{2p-2}(E \wedge E)$ as in \eqref{t_1 class}.

The $u, u_{i}$ classes of $E_{m \ast}$ are related to $v_{i}$ classes of $BP_{\ast}$ and $E(m)_{\ast}$ via the second map in \eqref{BP maps}. At the level of homotopy groups, $E(m) \to E_m$ sends 
\begin{equation}\label{Relation of u and v}
    \begin{aligned}
        v_{i} & \mapsto u_{i} \cdot u^{p^{i}-1} \text{, for } i < m\\
        v_{m} & \mapsto u^{p^{m}-1}.
    \end{aligned}
\end{equation}
From the relations given by Hazewinkel formula \cite[B.5, Pg 167-171]{MR1192553} we have the following equation relating $u_1, v_1, t_1$ and $u$
\begin{equation}\label{u,v,t}
    u_{1} = u^{1-p}v_{1} = u^{1-p}pt_{1}.
\end{equation}
The Hurewicz map takes the classes
$u, u_{i} \in E_{m \ast}$ to classes that we denote by the same symbol
\begin{equation}\label{u classes 1}
    u, u_{i} \in K(n)_{\ast}E_m.
\end{equation} 
Under the right Hurewicz map
$$K(n)_{\ast}E_m\to K(n)_{\ast}(E_m\wedge E_m) \cong K(n)_{\ast}E_m \otimes_{K(n)_{\ast}} K(n)_{\ast}E_m$$
we have
\begin{equation}\label{u classes 2}
    \begin{aligned}
        u_i & \mapsto 1 \otimes u_{i}\\
        u & \mapsto  1 \otimes u.
    \end{aligned}
\end{equation}
As in the discussion above \eqref{dt_1 class}, these result in elements of first Hochschild homology, denoted
\begin{equation}\label{du classes}
    du_{i} \in HH_{(1,0)}^{K(n)_{\ast}}K(n)_{\ast}E_{m} \text{ and } du \in HH_{(1,2)}^{K(n)_{\ast}}K(n)_{\ast}E_{m}.
\end{equation}

\subsection{The Bökstedt spectral sequence}\label{BSS}

The Bökstedt spectral sequence is an essential tool that computes the homology of $THH(R)$ from the Hochschild homology of the homology of a ring spectrum $R$ for any homology theory that satisfies the Künneth theorem \cite[Theorem IX.2.9]{MR1417719}. Examples of such homology theories include ordinary homology theory with field coefficients and all Morava $K$-theories $K(i)$. 

For the homology theory $K(i)$ and the ring spectrum $E_2$, the associated Bökstedt spectral sequence has the form
\begin{equation} \label{BSS Equation}
     \E^2_{r,s} = (HH_{r}^{K(i)_{\ast}}(K(i)_{\ast}E_{2}))_{s} \implies K(i)_{r+s}THH(E_{2}). 
\end{equation}
Here $K(i)_{\ast}$ and $K(i)_{\ast}E_{2}$ are graded rings, and so the Hochschild homology has an internal degree ($s$ in \eqref{BSS Equation}) in addition to a homological degree ($r$ in \eqref{BSS Equation}).
To construct the spectral sequence, we use the cyclic bar construction of $THH$ and the fact that smash product commutes with geometric realization 
\[ K(i) \wedge THH(E_{2}) = K(i) \wedge |E_{2}^{\wedge (\bullet+1)}|
\cong |K(i)\wedge E_{2}^{\wedge (\bullet+1)}|.\]
The spectral sequence arises from the simplicial filtration using the Künneth isomorphism, which turns the simplicial filtration of $K(i)_{\ast}THH(E_{2})$ into the Hochschild complex of $K(i)_{\ast}E_{2}$ relative to $K(i)_{\ast}$:
\begin{equation}\label{Künneth}
\pi_{*}(K(i)\wedge (E_{2} \wedge \cdots \wedge E_2))\cong (K(i)_*E_{2}) \otimes_{K(i)_*}\cdots \otimes_{K(i)_*} (K(i)_*E_{2}).
\end{equation}
The $\E^1$-page of the spectral sequence, thus, becomes 
\begin{equation}\label{E1 page BSS}
    \E^1_{r,s} = ((K(i)_{\ast}E_2)^{\otimes_{K(i)_{\ast}}r})_s.
\end{equation}

\eqref{BSS Equation} is a multiplicative spectral sequence of commutative $K(i)_{\ast}E_2$-algebras \cite[Proposition 4.3]{Angeltveit_2005}. The $\E^2$-page differentials are maps
\begin{equation}\label{differentials of BSS}
    \E^2_{r,s} \to \E^{2}_{r-2,s+1}
\end{equation}
and satisfy the Leibniz rule.

\subsection{Hochschild homology of $H$-smooth rings}\label{HH of H-smooth} For our computations, we need to know the relative Hochschild homologies of certain rings that are not étale or smooth over the base ring but do satisfy the condition of $H$-smoothness of Larsen--Lindenstrauss~\cite{MR1853116}. Below we summarize some of the results from \cite{MR1853116} and use them to conclude $H$-smoothness of the ring pairs we are interested in.   

\begin{definition}\label{H-smooth}\cite[Definition 1.1]{MR1853116}
    Let $k$ be a commutative ring and $A$ a commutative $k$-algebra. Then the \textit{pair $(k, A)$ is $H$-smooth} (or \textit{$A$ is $H$-smooth over $k$}) if $HH_{\ast}(A)$ is flat over $k$ and the Hochschild-Kostant-Rosenberg map \cite{HKR}
    $$\lambda_{n}^{A/k}: \Lambda^{n}HH_{1}^{k}(A) \rightarrow HH_{n}^{k}(A)$$
    is an isomorphism for all $n$.
\end{definition}

For an $H$-smooth pair $(k,A)$, we have by definition, the calculation of $HH^k_\ast(A)$ in terms of $HH^k_1(A)$. The main results of \cite{MR1853116} that we need allow for identification of some useful $H$-smooth pairs. The following three results are the ones we need.

\begin{proposition}\label{Base change for H-smooth}\cite[Proposition 1.3]{MR1853116}
$H$-smoothness respects arbitrary base change.
\end{proposition}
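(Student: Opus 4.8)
The plan is to reduce the statement to the behaviour of the cyclic bar construction under base change and then use the flatness hypothesis to transport the homology. Fix an $H$-smooth pair $(k,A)$ and an arbitrary ring map $k \to k'$, and set $A' := A \otimes_k k'$. The first observation is purely formal: since $(A \otimes_k k')^{\otimes_{k'}(n+1)} \cong A^{\otimes_k(n+1)} \otimes_k k'$, the cyclic bar constructions satisfy
\[ N^{cy}_{k'}(A') \cong N^{cy}_{k}(A) \otimes_{k} k' \]
as simplicial $k'$-modules. Thus $\HH_{\ast}^{k'}(A')$ is the homology of the complex $N^{cy}_{k}(A) \otimes_k k'$, and the whole proposition becomes a question of commuting homology past $-\otimes_k k'$, together with checking that the two defining conditions of $H$-smoothness survive.

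The heart of the argument is the base-change isomorphism $\HH_{\ast}^{k'}(A') \cong \HH_{\ast}^{k}(A) \otimes_k k'$. For this I would invoke the universal-coefficient (hyperhomology) spectral sequence associated to the complex $N^{cy}_k(A)$ and the coefficients $k'$,
\[ \E^2_{p,q} = \Tor_{p}^{k}\big(\HH_{q}^{k}(A),\, k'\big) \implies \HH_{p+q}^{k'}(A'). \]
By hypothesis $\HH_{\ast}^{k}(A)$ is flat over $k$, so $\Tor_p^k(\HH_q^k(A), k') = 0$ for all $p>0$ and the spectral sequence collapses onto the line $p=0$, yielding $\HH_{n}^{k'}(A') \cong \HH_n^k(A) \otimes_k k'$ for every $n$.

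With this identification the remaining two conditions follow formally. Flatness of $\HH_{\ast}^{k'}(A')$ over $k'$ is immediate, since flatness is preserved under base change: $\HH_{\ast}^{k}(A) \otimes_k k'$ is $k'$-flat because $\HH_{\ast}^k(A)$ is $k$-flat. For the Hochschild--Kostant--Rosenberg condition I would use naturality of $\lambda_n$ together with the fact that exterior powers commute with base change. Concretely, the case $q=1$ gives $\HH_1^{k'}(A') \cong \HH_1^k(A) \otimes_A A'$, whence $\Lambda_{A'}^{n} \HH_1^{k'}(A') \cong (\Lambda_{A}^{n} \HH_1^{k}(A)) \otimes_A A' \cong (\Lambda_A^n \HH_1^k(A)) \otimes_k k'$. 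Under these identifications the map $\lambda_n^{A'/k'}$ is identified with $\lambda_n^{A/k} \otimes_k k'$, which is an isomorphism because $\lambda_n^{A/k}$ is. Hence $(k', A')$ is $H$-smooth.

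The step I expect to be the main obstacle is the abutment of the spectral sequence in the second paragraph: it naturally computes the homology of the derived tensor product $N^{cy}_k(A) \otimes^{\bL}_k k'$, whereas the object we care about is the underived $N^{cy}_k(A) \otimes_k k'$, whose homology is genuinely $\HH_\ast^{k'}(A')$. These agree once the bar terms $A^{\otimes_k(n+1)}$ are $k$-flat; in the pairs relevant to this paper $A$ is flat over $k$, so this is automatic, but in general one must either arrange flatness of the bar terms or argue directly that flatness of $\HH_\ast^k(A)$ forces the comparison map $N^{cy}_k(A) \otimes^{\bL}_k k' \to N^{cy}_k(A) \otimes_k k'$ to be a quasi-isomorphism. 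Everything after this point is bookkeeping: confirming naturality of the HKR map and that the exterior-power identifications are compatible with $\lambda_n$.
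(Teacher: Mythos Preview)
The paper does not supply its own proof of this proposition; it is quoted verbatim as \cite[Proposition 1.3]{MR1853116} and used as a black box. So there is no ``paper's proof'' to compare against beyond whatever argument appears in Larsen--Lindenstrauss.

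Your argument is essentially the standard one and is correct, but the ``main obstacle'' you flag in your last paragraph is not actually an obstacle: it is already handled by the hypothesis. Since $HH_0^k(A)=A$ for $A$ commutative, the flatness of $HH_\ast^k(A)$ over $k$ in the definition of $H$-smoothness forces $A$ itself to be $k$-flat. Hence every bar term $A^{\otimes_k(n+1)}$ is $k$-flat, and the derived and underived tensor products $N^{cy}_k(A)\otimes_k^{\bbL} k'$ and $N^{cy}_k(A)\otimes_k k'$ agree on the nose. With that observation your universal-coefficient/K\"unneth spectral sequence collapses exactly as you say, and the rest of your bookkeeping (preservation of flatness under base change, compatibility of $\lambda_n$ with $-\otimes_k k'$ via the base-change isomorphism for exterior powers) goes through without further caveats. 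In short: your proof is fine once you notice that $H$-smoothness already contains the flatness of $A$ over $k$ that you thought you might be missing.
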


\begin{proposition}\cite[Corollary 1.6]{MR1853116}\label{Transitivity of H-smooth}
For $k$ a commutative ring, let $A$ be a commutative $k$-algebra and $B$ a commutative $A$-algebra. Then if $A$ is $H$-smooth over $k$ and $B$ is $H$-smooth over $A$, $B$ is $H$-smooth over $k$.
\end{proposition}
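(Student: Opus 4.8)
The plan is to derive the transitivity from a single base-change spectral sequence for Hochschild homology along the composite $k \to A \to B$, and then to verify that the two defining conditions of \cref{H-smooth}, flatness and the HKR isomorphism, are each inherited by the pair $(k,B)$. The organizing principle is the derived base-change identity $HH^k(B) \simeq HH^A(B)\otimes^{\mathbb L}_A HH^k(A)$, which yields a first-quadrant, multiplicative spectral sequence of graded-commutative $B$-algebras
\[
E^2_{p,q} = \bigoplus_{i+j=q}\Tor^A_p\!\big(HH_i^A(B),\, HH_j^k(A)\big)\ \Longrightarrow\ HH_{p+q}^k(B).
\]
Its low-degree edge recovers the transitivity short exact sequence
\[
B\otimes_A HH_1^k(A)\longrightarrow HH_1^k(B)\longrightarrow HH_1^A(B)\longrightarrow 0,
\]
the Hochschild analogue of the first fundamental sequence for Kähler differentials, which is the concrete object I will take exterior powers of.

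First I would collapse the spectral sequence. By $H$-smoothness of $B$ over $A$ the graded module $HH^A_*(B)$ is $A$-flat, so $\Tor^A_p(HH^A_*(B),-)$ vanishes for $p>0$ and the sequence degenerates at $E^2$, giving an isomorphism of graded $B$-modules
\[
HH_n^k(B)\ \cong\ \bigoplus_{i+j=n} HH_i^A(B)\otimes_A HH_j^k(A).
\]
Flatness of $HH^k_*(B)$ is then immediate from this decomposition: each summand is the tensor over $A$ of an $A$-flat module $HH^A_i(B)$ with the module $HH^k_j(A)$, which is flat over $k$ by $H$-smoothness of $A/k$, and such a tensor product is again $k$-flat. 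This establishes the flatness half of \cref{H-smooth} for $(k,B)$.

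For the HKR half, the degree-one part of the collapsed formula reads $HH_1^k(B)\cong HH_1^A(B)\oplus (B\otimes_A HH_1^k(A))$, so the transitivity sequence above is in fact a split short exact sequence of $B$-modules. Taking exterior powers of a split sequence gives
\[
\Lambda^n_B HH_1^k(B)\ \cong\ \bigoplus_{i+j=n}\Lambda^i_B HH_1^A(B)\otimes_B \Lambda^j_B\!\big(B\otimes_A HH_1^k(A)\big).
\]
I would then feed in the two given HKR isomorphisms: $\Lambda^i_B HH_1^A(B)\xrightarrow{\ \sim\ } HH_i^A(B)$ from $H$-smoothness of $B/A$, and, using that the exterior power commutes with the base change $A\to B$ together with \cref{Base change for H-smooth}, $\Lambda^j_B(B\otimes_A HH_1^k(A))\cong B\otimes_A\Lambda^j_A HH_1^k(A)\xrightarrow{\ \sim\ } B\otimes_A HH_j^k(A)$ from $H$-smoothness of $A/k$. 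Assembling these matches $\Lambda^n_B HH_1^k(B)$ term by term with the collapsed decomposition of $HH_n^k(B)$.

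The crux --- and the step I expect to be the main obstacle --- is to check that this assembled isomorphism is genuinely the global HKR map $\lambda_n^{B/k}$ and not merely some abstract isomorphism between the same two modules. Here multiplicativity does the work: both the spectral sequence and the HKR map are morphisms of graded-commutative $B$-algebras generated in homological degree one, so it suffices to verify agreement in degree one, where the comparison is exactly the splitting of the transitivity sequence. Pinning down this multiplicative compatibility --- that the algebra isomorphism induced on the collapsed $E^\infty$-page agrees with $\lambda^{B/k}_*$ and restricts to $\lambda_1^{B/k}$ on generators --- is the real content; once the base-change spectral sequence and its $B$-algebra structure are in hand, the remaining identifications are formal.
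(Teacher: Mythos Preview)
The paper does not supply its own proof of this proposition; it is quoted as \cite[Corollary~1.6]{MR1853116} and used as a black box in the preliminaries. Your argument is essentially the one given in that reference: Larsen and Lindenstrauss prove the derived base-change/transitivity spectral sequence you invoke (their Proposition~1.5), observe that it collapses under the $A$-flatness of $HH^A_\ast(B)$, and then match the resulting tensor decomposition with the exterior power of the split degree-one transitivity sequence exactly as you outline, including the check that the identification is the HKR map via multiplicativity.
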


\begin{theorem}\cite[Theorem 1.7]{MR1853116}\label{Excellent rings and H-smooth}
If $A$ is an excellent ring as in \cite[\href{https://stacks.math.columbia.edu/tag/07QS}{Tag 07QS}]{stacks-project}, $\mathfrak{m}$ is a maximal ideal in $A$, and $\hat{A}_{\mathfrak{m}}$ is the completion of $A$ at $\mathfrak{m}$, then $\hat{A}_{\mathfrak{m}}$ is $H$-smooth over $A$.
\end{theorem}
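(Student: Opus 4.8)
The plan is to deduce the statement from Néron–Popescu desingularization together with the classical Hochschild–Kostant–Rosenberg theorem for smooth algebras. First I would reduce from the base $A$ to the local base $A_{\mathfrak{m}}$. Since $A \to A_{\mathfrak{m}}$ is a localization, it is an epimorphism of rings, so for any two $A_{\mathfrak{m}}$-modules $M, N$ the natural map $M \otimes_A N \to M \otimes_{A_{\mathfrak{m}}} N$ is an isomorphism. Applying this to the cyclic bar construction of $\hat{A}_{\mathfrak{m}}$ identifies the Hochschild complexes over $A$ and over $A_{\mathfrak{m}}$, so $HH^A_{\ast}(\hat{A}_{\mathfrak{m}}) \cong HH^{A_{\mathfrak{m}}}_{\ast}(\hat{A}_{\mathfrak{m}})$ compatibly with the HKR maps. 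Because a localization of an excellent ring is excellent, it therefore suffices to prove that $\hat{A}_{\mathfrak{m}}$ is $H$-smooth over the excellent local ring $A_{\mathfrak{m}}$.

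Next I would use excellence to control the completion map. The G-ring condition, which is part of excellence, says precisely that the formal fibers of $A_{\mathfrak{m}}$ are geometrically regular; combined with the (automatic) flatness of completion for a Noetherian local ring, this makes $A_{\mathfrak{m}} \to \hat{A}_{\mathfrak{m}}$ a \emph{regular} homomorphism. By Néron–Popescu desingularization \cite{stacks-project}, a regular homomorphism of Noetherian rings exhibits its target as a filtered colimit of smooth algebras: $\hat{A}_{\mathfrak{m}} \cong \colim_i B_i$ with each $B_i$ a smooth (finitely presented) $A_{\mathfrak{m}}$-algebra.

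Now I would transport $H$-smoothness across this colimit. Hochschild homology commutes with filtered colimits of algebras, so $HH^{A_{\mathfrak{m}}}_n(\hat{A}_{\mathfrak{m}}) \cong \colim_i HH^{A_{\mathfrak{m}}}_n(B_i)$. For each smooth $B_i$ the classical HKR theorem identifies $HH^{A_{\mathfrak{m}}}_n(B_i)$ with $\Omega^n_{B_i/A_{\mathfrak{m}}} \cong \Lambda^n \Omega^1_{B_i/A_{\mathfrak{m}}}$ via the map $\lambda_n$ of \cref{H-smooth}, and $\Omega^1_{B_i/A_{\mathfrak{m}}}$ is a finitely generated projective $B_i$-module. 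Since $\lambda_n$ is natural in the algebra, the isomorphisms $\lambda_n^{B_i/A_{\mathfrak{m}}}$ form a map of filtered systems whose colimit is $\lambda_n^{\hat{A}_{\mathfrak{m}}/A_{\mathfrak{m}}}$; as Hochschild homology and exterior powers both commute with filtered colimits of algebras, a filtered colimit of isomorphisms is an isomorphism, giving the HKR half of $H$-smoothness. For flatness, each $\Omega^n_{B_i/A_{\mathfrak{m}}}$ is a direct summand of a free $B_i$-module and $B_i$ is flat over $A_{\mathfrak{m}}$, so each $HH^{A_{\mathfrak{m}}}_n(B_i)$ is flat over $A_{\mathfrak{m}}$; a filtered colimit of flat modules is flat, so $HH^{A_{\mathfrak{m}}}_{\ast}(\hat{A}_{\mathfrak{m}})$ is flat over $A_{\mathfrak{m}}$.

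The deep input, and the step I expect to be the main obstacle, is Néron–Popescu desingularization: without it there is no clean way to approximate the completion by smooth algebras, and it is exactly where excellence (specifically the G-ring hypothesis of geometrically regular formal fibers) is consumed. The only other point requiring care is the compatibility of the construction $M \mapsto \Lambda^n_B M$ with the filtered colimit, in which both the module $M = HH_1$ and the base ring $B$ vary; I would handle this by phrasing everything through the naturality of $\lambda_n$, so that one only ever passes to the colimit of a diagram of isomorphisms rather than trying to commute exterior powers past a change of base directly.
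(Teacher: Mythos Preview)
The paper does not supply its own proof of this statement: it is quoted from \cite[Theorem 1.7]{MR1853116} and used as a black box in \cref{HH of H-smooth}, so there is nothing in the present paper to compare your argument against.

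For what it is worth, your outline is sound and is essentially the argument Larsen and Lindenstrauss give in the cited reference. The reduction to the local base via the ring epimorphism $A \to A_{\mathfrak{m}}$, the appeal to N\'eron--Popescu desingularization to express $\hat{A}_{\mathfrak{m}}$ as a filtered colimit of smooth $A_{\mathfrak{m}}$-algebras (which is exactly where the G-ring part of excellence is consumed), and the passage through the colimit using classical HKR for each smooth $B_i$ together with preservation of flatness under filtered colimits---this is precisely the structure of their proof. Your identification of N\'eron--Popescu as the one deep input is on the mark; everything else is formal.
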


\begin{remark}\label{Excellent rings}
   While the definition of an excellent ring is complicated, in practice most rings are excellent. Some examples of excellent rings include fields, Noetherian complete local rings, $\Z$, Dedekind domains with fraction field of characteristic zero, and finite type ring extensions of any of the above. In particular, $\mathbb{Q}$, $\mathbb{Q}_{p}$, $\mathbb{Z}_{p}[t]$, $\mathbb{F}_{p}$ and $\mathbb{F}_{p}[t]$ are excellent rings. 
\end{remark}

\begin{observation}\label{H-smooth pairs}\indent
\begin{enumerate}[leftmargin=*]
    \item\label{1} From \cref{Excellent rings}, \cref{Excellent rings and H-smooth} and the fact that smooth extensions are by definition $H$-smooth, we conclude that the following pairs are $H$-smooth: $(\mathbb{Q}, \mathbb{Q}_{p})$, $(\mathbb{Q}_{p}, \mathbb{Q}_{p}[t])$, $(\mathbb{Z}_{p}[t], \mathbb{Z}_{p}[[t]])$, $(\mathbb{F}_{p}, \mathbb{F}_{p}[t])$ and $(\mathbb{F}_{p}[t], \mathbb{F}_{p}[[t]])$.
    \item\label{2} Doing a base change on pair $(\mathbb{Z}_{p}[t], \mathbb{Z}_{p}[[t]])$ by tensoring with $\Q$ and using \cref{Base change for H-smooth} we get $(\mathbb{Q}_{p}[t], \mathbb{Z}_{p}[[t]] \otimes \Q)$ is $H$-smooth.
    \item\label{3} Applying \cref{Transitivity of H-smooth} on pairs $(\Q, \Q_p)$, $(\Q_p, \Q_p[t])$, $(\Q_p[t], \Z_{p}[[t]] \otimes \Q)$ we observe $(\Q, \Z_{p}[[t]] \otimes \Q)$ is $H$-smooth. Similarly from $H$-smoothness of $(\F_{p}, \F_{p}[t])$ and $(\F_{p}[t], \F_{p}[[t]])$, we conclude $(\F_{p}, \F_p[[t]])$ is $H$-smooth.
    \end{enumerate}
\end{observation}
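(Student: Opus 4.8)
The plan is to obtain every listed pair by combining the structural inputs already recorded---that smooth extensions are $H$-smooth, the completion theorem \cref{Excellent rings and H-smooth}, base change \cref{Base change for H-smooth}, and transitivity \cref{Transitivity of H-smooth}---together with the definition \cref{H-smooth} and the catalogue of excellent rings in \cref{Excellent rings}. There is no single hard step; the work lies in choosing the correct maximal ideals, identifying the completions, and computing the relevant tensor products.

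For \eqref{1} I would treat the five pairs in two groups. The pairs $(\Q_p,\Q_p[t])$ and $(\F_p,\F_p[t])$ are polynomial extensions of a field, hence smooth and therefore $H$-smooth. For the completion pairs I invoke \cref{Excellent rings and H-smooth}: since $\F_p[t]$ is excellent and $(t)$ is maximal with $\F_p[t]/(t)\cong \F_p$, its $(t)$-adic completion $\F_p[[t]]$ is $H$-smooth over it; and $\Z_p[t]$ is excellent, being of finite type over the complete Noetherian local ring $\Z_p$. Here I would be careful: $(t)$ is \emph{not} maximal in $\Z_p[t]$, as $\Z_p[t]/(t)\cong\Z_p$ is not a field, so I complete instead at the maximal ideal $\mathfrak{m}=(p,t)$. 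Because $\Z_p$ is already $p$-adically complete, the $\mathfrak{m}$-adic completion of $\Z_p[t]$ is still $\Z_p[[t]]$ (one checks $\Z_p[t]/\mathfrak{m}^n\cong\Z_p[[t]]/\mathfrak{m}^n$ for all $n$), so \cref{Excellent rings and H-smooth} gives $(\Z_p[t],\Z_p[[t]])$. Finally, $(\Q,\Q_p)$ cannot come from \cref{Excellent rings and H-smooth} directly, since $\Q$, being a field, has no completion at a maximal ideal other than itself; instead I would apply that theorem to the excellent ring $\Z$ at $(p)$, whose completion is $\Z_p$, and then base change along $\Z\to\Q$ via \cref{Base change for H-smooth}, using $\Z_p\otimes_{\Z}\Q\cong\Q_p$.

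Item \eqref{2} I would obtain as a single base change: applying \cref{Base change for H-smooth} to $(\Z_p[t],\Z_p[[t]])$ along the ground-ring map $\Z_p[t]\to\Z_p[t]\otimes_{\Z}\Q=\Q_p[t]$ produces the pair $(\Q_p[t],\,\Z_p[[t]]\otimes_{\Z_p[t]}\Q_p[t])$, and one identifies $\Z_p[[t]]\otimes_{\Z_p[t]}\Q_p[t]\cong\Z_p[[t]]\otimes\Q$. For \eqref{3} I would chain \cref{Transitivity of H-smooth} twice: along $\Q\to\Q_p\to\Q_p[t]\to\Z_p[[t]]\otimes\Q$, whose three steps are $H$-smooth by \eqref{1} and \eqref{2}, to get $(\Q,\Z_p[[t]]\otimes\Q)$; and along $\F_p\to\F_p[t]\to\F_p[[t]]$, both steps $H$-smooth by \eqref{1}, to get $(\F_p,\F_p[[t]])$.

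I do not expect a genuine obstacle, as the argument is bookkeeping built on the cited results; the step deserving the most care is the application of the completion theorem to $\Z_p[t]$, where the naive ideal $(t)$ fails to be maximal and one must verify that completing at $(p,t)$ nonetheless recovers exactly $\Z_p[[t]]$. The remaining points---confirming excellence of each ground ring from \cref{Excellent rings}, and checking the identifications $\Z_p\otimes_{\Z}\Q\cong\Q_p$ and $\Z_p[[t]]\otimes_{\Z_p[t]}\Q_p[t]\cong\Z_p[[t]]\otimes\Q$---are routine.
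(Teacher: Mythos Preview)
Your proposal is correct and follows the paper's approach, which treats this observation as essentially self-evident and embeds the reasoning in the statement itself. You are in fact more careful than the paper on two points in item~\eqref{1}: you note that $\Q$ has no nontrivial maximal ideal, so $(\Q,\Q_p)$ cannot come directly from \cref{Excellent rings and H-smooth} and must instead be obtained from $(\Z,\Z_p)$ by base change along $\Z\to\Q$; and you observe that $(t)$ is not maximal in $\Z_p[t]$, so one must complete at $(p,t)$ and check that this still produces $\Z_p[[t]]$. The paper glosses over both subtleties, citing only \cref{Excellent rings and H-smooth} and smoothness for all five pairs, whereas your treatment of $(\Q,\Q_p)$ invokes \cref{Base change for H-smooth} as well. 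Items~\eqref{2} and~\eqref{3} match the paper exactly.
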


The final result below tells us the Hochschild homology of power series rings in one variable.

\begin{proposition}\cite[Proposition 2.5, Corollary 2.3]{MR1853116}\label{HH of power series}
 Let $k$ be a field. Then there is a non-canonical isomorphism
 \[ HH_{\ast}^{k}k[[t]] \cong HH_{\ast}^{k}k[t] \otimes_{k[t]} HH_{\ast}^{k[t]}k[[t]] \cong HH_{\ast}^{k}k[t] \otimes_{k[t]} (k[[t]] \oplus \Lambda_{k((t))}^{\geq 1} V_{k}) \]
 where $V_{k} := HH_{1}^{k(t)}k((t))$ is an infinite dimensional vector space over $k((t))$. In the bracketed term, $k[[t]]$ is in homological degree $0$ and $V_{k}$ is in homological degree $1$.
\end{proposition}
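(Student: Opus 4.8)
The plan is to prove the two displayed isomorphisms separately: the first is an instance of the base-change (transitivity) formula for Hochschild homology applied to the tower $k \to k[t] \to k[[t]]$, while the second is an explicit computation of $HH^{k[t]}_\ast k[[t]]$ that exploits the $H$-smoothness of $k[[t]]$ over $k[t]$.

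For the first isomorphism I would begin by recording the two relevant smoothness facts. The polynomial ring $k[t]$ is smooth over $k$, so $HH^k_\ast k[t] = \Lambda^\ast_{k[t]}\Omega^1_{k[t]/k}$ is the exterior algebra on the free rank-one module $k[t]\,dt$, concentrated in homological degrees $0$ and $1$. Moreover $k[t]$ is excellent (it is of finite type over the field $k$), so by \cref{Excellent rings and H-smooth} its completion $k[[t]]$ at the maximal ideal $(t)$ is $H$-smooth over $k[t]$; in particular $HH^{k[t]}_\ast k[[t]]$ is flat over $k[t]$. With both flatness hypotheses in hand, the transitivity isomorphism underlying \cref{Transitivity of H-smooth} (the Hochschild analogue of the split conormal sequence $0 \to k[[t]]\otimes_{k[t]}\Omega^1_{k[t]/k}\to \Omega^1_{k[[t]]/k}\to \Omega^1_{k[[t]]/k[t]}\to 0$ together with its exterior powers) yields the isomorphism $HH^k_\ast k[[t]]\cong HH^k_\ast k[t]\otimes_{k[t]}HH^{k[t]}_\ast k[[t]]$. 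The splitting of that sequence is what forces the isomorphism to be non-canonical.

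For the second isomorphism, $H$-smoothness of $(k[t],k[[t]])$ gives, via \cref{H-smooth}, that $HH^{k[t]}_n k[[t]]\cong \Lambda^n_{k[[t]]}\Omega^1_{k[[t]]/k[t]}$ for all $n$, so everything is controlled by the single module $\Omega^1_{k[[t]]/k[t]} = HH_1^{k[t]}k[[t]]$. The crux is to show this module is in fact a $k((t))$-vector space. Surjectivity of multiplication by $t$ is immediate: writing $f = a_0 + t g$ with $g\in k[[t]]$ gives $df = t\,dg$ since $dt = 0$ relative to $k[t]$, so every generator $df$, and hence every element, is divisible by $t$. Granting that $t$ acts invertibly, $\Omega^1_{k[[t]]/k[t]}$ becomes a $k((t))$-module, and since $\Omega^1_{k(t)/k[t]}$ and $\Omega^1_{k((t))/k[[t]]}$ vanish, the conormal sequences for the two factorizations of $k[t]\to k((t))$ identify $\Omega^1_{k[[t]]/k[t]}\otimes_{k[[t]]}k((t))$ with $V_k = \Omega^1_{k((t))/k(t)} = HH_1^{k(t)}k((t))$; as $\Omega^1_{k[[t]]/k[t]}$ is already $t$-local this tensor factor does nothing, so $\Omega^1_{k[[t]]/k[t]}\cong V_k$. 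Because the $k[[t]]$-action on a $k((t))$-module factors through $k((t))$, one has $\Lambda^n_{k[[t]]}V_k = \Lambda^n_{k((t))}V_k$ for $n\geq 1$, which assembles the exterior powers into $k[[t]]\oplus \Lambda^{\geq 1}_{k((t))}V_k$ with $k[[t]] = HH_0$ in degree $0$; infinite-dimensionality of $V_k$ follows from the infinite transcendence degree of $k((t))$ over $k(t)$.

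The main obstacle is the claim that multiplication by $t$ is \emph{injective} (equivalently, that $\Omega^1_{k[[t]]/k[t]}$ is $t$-torsion-free, so that the divisibility above upgrades to a genuine $k((t))$-vector space structure). This is precisely the point where the wild behaviour of the \emph{algebraic} Kähler differentials of a complete ring enters: unlike the continuous differentials, $\Omega^1_{k[[t]]/k[t]}$ is an enormous divisible module, and pinning down its structure is the technical heart of the argument in \cite{MR1853116}. Once the $k((t))$-vector space structure is in place, the remaining bookkeeping with exterior powers and localization is routine.
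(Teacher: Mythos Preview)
The paper does not give an independent proof of this proposition: it is quoted from \cite{MR1853116}, and the only argument supplied is the paragraph immediately following the statement, which justifies the \emph{second} isomorphism by invoking \cite[Corollary~2.3]{MR1853116} in the form
\[
HH_i^{k[t]}k[[t]] \;\cong\; HH_i^{k(t)}k((t)) \qquad (i\geq 1),
\]
and then reading off $\Lambda_{k[[t]]}^{\geq 1}HH_1^{k[t]}k[[t]] \cong \Lambda_{k((t))}^{\geq 1}V_k$ from this together with $H$-smoothness of the field extension $k(t)\subset k((t))$. The first isomorphism is simply cited.

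Your proposal is therefore not so much a different proof as an attempt to unpack what \cite{MR1853116} actually does. Your treatment of the first isomorphism via the transitivity property underlying \cref{Transitivity of H-smooth} is exactly the intended mechanism. For the second isomorphism your route---showing directly that $\Omega^1_{k[[t]]/k[t]}$ is a $k((t))$-vector space by proving that $t$ acts bijectively---is equivalent to the cited Corollary~2.3, and you have correctly isolated the nontrivial step: $t$-divisibility is the easy half (your argument $df=t\,dg$ is fine), while $t$-torsion-freeness is the genuine content and is precisely what Larsen--Lindenstrauss establish. So your self-diagnosed ``main obstacle'' is real, not an artifact of your approach, and once it is granted the rest of your bookkeeping (the conormal identifications, $\Lambda^n_{k[[t]]}V_k=\Lambda^n_{k((t))}V_k$ for $n\geq 1$, and the transcendence-degree argument for infinite dimensionality) is correct. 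Compared with the paper, you are supplying strictly more detail; the paper simply outsources the hard step to the same reference you would ultimately need.
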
 

The latter isomorphism in \cref{HH of power series} holds because from \cite[Corollary 2.3]{MR1853116}
$$HH_{i}^{k[t]}k[[t]] \cong HH_{i}^{k(t)}k((t))$$ 
for $i \geq 1$ and thus 
\begin{equation}\label{power series vs field of fractions}
    \Lambda_{k[[t]]}^{\geq 1}HH_{1}^{k[t]}k[[t]] \cong \Lambda_{k((t))}^{\geq 1}HH_{1}^{k(t)}k((t)).
\end{equation}
We will be applying this result mostly in the specific case of $k = \F_{p}$.

\subsection{Hochschild homology calculations for $E(n)$}\label{AR calculations}
Our later computations rely on some of the calculations of Ausoni--Richter~\cite{MR4071375} on the Hochschild homology of the Morava $K$-theory of Johnson-Wilson spectra, $HH^{K(i)_\ast}_\ast(K(i)_\ast E(n))$. While a lot of \cite{MR4071375} requires the hypothesis that the Johnson-Wilson theories are commutative, these computations do not need this hypotheses and so hold unconditionally.  We state the results here in the slightly reformulated form using discussions in the proof of \cite[lemma 5.1]{MR4071375}.

\begin{theorem}\cite[Proposition 2.3, Theorem 3.4, Page 380 line 3]{MR4071375} \label{HH of E(2)} We have the following isomorphisms of $K(0)_{\ast}E(2)$, $K(1)_{\ast}E(2)$ and $K(2)_{\ast}E(2)$-algebras, respectively
\begin{align*}
    HH_{\ast, \ast}^{K(0)_{\ast}}K(0)_{\ast}E(2) &\cong K(0)_{\ast}E(2) \otimes_{\Q} \Lambda_{\Q}(dt_{1}, dt_{2}),\\
HH_{\ast, \ast}^{K(1)_{\ast}}K(1)_{\ast}E(2) &\cong K(1)_{\ast}E(2) \otimes_{\F_p} \Lambda_{\F_{p}}(dt_{1}),\\
HH_{\ast, \ast}^{K(2)_{\ast}}K(2)_{\ast}E(2) &\cong K(2)_{\ast}E(2).
\end{align*}
Here the classes $dt_{1}$ in $$HH_{(1, 2p-2)}^{K(0)_{\ast}}K(0)_{\ast}E(2) \text{ and } HH_{(1, 2p-2)}^{K(1)_{\ast}}K(1)_{\ast}E(2)$$ are induced from the class $t_{1} \in \pi_{2p-1}(E(2) \wedge E(2))$ and the class $$dt_{2} \in HH_{(1, 2p^{2}-2)}^{K(0)_{\ast}}K(0)_{\ast}E(2)$$ is induced from $t_{2} \in \pi_{2p^{2}-1}(E(2) \wedge E(2))$. Also, for $i > 2$, $K(i)_{\ast}E(2)$ is trivial. 
\end{theorem}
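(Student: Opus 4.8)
The plan is to prove the theorem in two stages: first identify each ring $K(i)_*E(2)$ explicitly as a graded $K(i)_*$-algebra, and then compute its Hochschild homology by recognizing it as a smooth $K(i)_*$-algebra and applying the Hochschild--Kostant--Rosenberg theorem \cite{HKR}. The vanishing $K(i)_*E(2)=0$ for $i>2$ I would dispose of immediately: $E(2)$ is an $E(2)$-module spectrum, and $K(i)$ is $E(2)$-acyclic for $i>2$ by the standard chromatic fact that $K(m)\wedge E(n)\simeq 0$ whenever $m>n$ \cite{MR860042}, so $K(i)\wedge E(2)\simeq 0$.

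For the ring computation I would use the Landweber exactness of $E(2)$, which gives $K(i)_*E(2)\cong E(2)_*K(i)\cong E(2)_*\otimes_{BP_*}BP_*K(i)$ with no $\mathrm{Tor}$ correction, reducing the problem to the Hopf algebroid $BP_*BP=BP_*[t_1,t_2,\dots]$ and the known $BP$-homology of $K(i)$ \cite{MR860042}. Concretely this presents $K(i)_*E(2)$ as a quotient of $K(i)_*[t_1,t_2,\dots]\otimes_{BP_*}E(2)_*$, where the middle $BP_*$ acts on the left factor through $\eta_R$, so that the tensor imposes the relations $\eta_R(v_j)=v_j^{E(2)}$, computed after setting $v_k=0$ for $k\neq i$ and inverting $v_i$ on the $K(i)$ side. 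Two features of these relations do all the work. First, they express the target classes $v_j^{E(2)}$, and in particular $v_1,v_2$, as functions of the $t_j$, so that $K(i)_*E(2)$ is generated over $K(i)_*$ by the $t_j$ alone; the height-two condition $v_j^{E(2)}=0$ for $j>2$ then eliminates $t_j$ for $j\geq 3$, expressing them algebraically in terms of $t_1,t_2$. Second, in positive characteristic the Hazewinkel relations \eqref{u,v,t} contribute $p$-th power (Frobenius) terms such as $v_i t_1^{p^i}$; once $v_i$ is inverted these force the ``extra'' generators to be separably algebraic over the remaining free one(s). Carrying this out should yield that $K(0)_*E(2)$ is smooth over $\Q$ of relative dimension two (free directions $t_1,t_2$, since there is no Frobenius in characteristic zero), that $K(1)_*E(2)$ is smooth over $K(1)_*$ of relative dimension one (free direction $t_1$, with $t_2$ rendered separable), and that $K(2)_*E(2)$ is étale over $K(2)_*$ (both $t_1,t_2$ separable).

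With the rings identified, the Hochschild homology is immediate from the module of Kähler differentials. Smoothness makes each pair $H$-smooth in the sense of \cref{H-smooth}, so the HKR map is an isomorphism and $HH_{*,*}^{K(i)_*}K(i)_*E(2)\cong \Lambda_{K(i)_*E(2)}\bigl(\Omega^1_{K(i)_*E(2)/K(i)_*}\bigr)$. Here $\Omega^1$ is free on $\{dt_1,dt_2\}$ for $i=0$, free on $\{dt_1\}$ for $i=1$, and zero for $i=2$, where each $dt_j$ is the image of $t_j\in\pi_*(E(2)\wedge E(2))$ as in \eqref{dt_1 class}. Rewriting the exterior algebra over the ground field, so as to pull the coefficients $K(i)_*E(2)$ out of the tensor, gives exactly the three displayed isomorphisms, the case $i=2$ collapsing to $HH_*=K(2)_*E(2)$ concentrated in homological degree zero.

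The main obstacle is the precise determination of the ring structure in positive height, namely tracking the Frobenius relations among $t_1,t_2$ imposed by $\eta_R(v_j)$ after the appropriate $v_k$ are set to zero and $v_i$ is inverted. This bookkeeping is exactly what decides which $dt_j$ survive in $\Omega^1$ and which vanish because their generator has become separable, and it must be done carefully enough to show not merely that $HH_1$ has the expected rank but that each $K(i)_*E(2)$ is genuinely smooth over $K(i)_*$ (étale, for $i=2$), so that the full HKR isomorphism holds in every homological degree. Establishing the separability underlying the étale claim for $K(2)_*E(2)$ --- hence $\Omega^1=0$ rather than a nonzero torsion module --- is the crux.
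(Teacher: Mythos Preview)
The paper does not give a proof of this statement at all: it is quoted verbatim from Ausoni--Richter~\cite{MR4071375} (their Proposition~2.3, Theorem~3.4, and the line on p.~380), with the remark that it is restated using the discussion in the proof of their Lemma~5.1. So there is no ``paper's proof'' to compare against.

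Your outline is essentially the strategy carried out in the cited reference: identify $K(i)_*E(2)$ via Landweber exactness as $K(i)_*[t_1,t_2,\dots]$ modulo the relations $\eta_R(v_j)=0$ for $j\ge 3$ with $\eta_R(v_2)$ inverted, verify smoothness/\'etaleness over $K(i)_*$, and apply HKR. The broad shape is correct, and you have correctly located the real work --- the careful analysis of which $t_j$ remain free and which become separable once the right-unit relations are imposed in each height.

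One small wobble to flag: your sentence ``the height-two condition $v_j^{E(2)}=0$ for $j>2$ then eliminates $t_j$ for $j\geq 3$'' is only literally correct rationally. Once $p=0$ the coefficient of $t_n$ in $\eta_R(v_n)$ vanishes, and it is the term $v_i t_{n-i}^{p^i}$ that survives; so in height $i\ge 1$ the relation $\eta_R(v_n)=0$ makes $t_{n-i}$ (not $t_n$) satisfy a separable equation. Your later summary (``$t_2$ rendered separable'' for $i=1$; ``both $t_1,t_2$ separable'' for $i=2$) is consistent with this shift, but the intermediate indexing is not. This is exactly the bookkeeping you call ``the main obstacle,'' and it is what Ausoni--Richter work out in detail; your plan would succeed once that is done.
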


\section{$K(i)$ homology groups of $\THH(E_2)$}\label{K(i) of THHE2}

In this section, we calculate $K(i)_\ast(THH(E_2))$ for all $i$ (\cref{K0 homology of THHE_2}, \cref{K(1) of THH(E2)}, \cref{K(2) of THH(E2)} and \cref{higher i}). First, we calculate the Hochschild homology $HH^{K(i)_\ast}(K(i)_\ast E_2)$ of $K(i)_{\ast}E_{2}$ relative to $K(i)_{\ast}$ using the $H$-smoothness results and Ausoni--Richter~\cite{MR4071375} calculations reviewed above.  We then use the Bökstedt spectral sequence \eqref{BSS Equation} to compute $K(i)_{\ast}THH(E_{2})$.

\bigskip
\subsection{The case $i=0$: {$K(0)_{\ast}THH(E_{2})$}}

Note that 
$K(0)_{\ast}E_{2} \cong \pi_{\ast}(E_2)_{\Q}$
and
\begin{equation}\label{rationalization}
    HH_{\ast}^{K(0)_{\ast}}(K(0)_{\ast}E_{2}) \cong HH_{\ast}^{\mathbb{Q}}(\pi_{\ast}(E_{2})_{\mathbb{Q}}).
\end{equation}
To compute \eqref{rationalization}, we need the homotopy groups of rationalized spectrum $E_2$. These are given by tensoring the homotopy groups of $E_2$ with $\Q$. We get
\begin{align*}
    \pi_{\ast}(E_{2})_{\mathbb{Q}} & \cong (\Z_{p}[\mu_{p^{2}-1}][u, u^{-1}][[u_{1}]])\otimes_{\Z} \Q \\
    & \cong (\Z[\mu_{p^{2}-1}]\otimes_{\Z} \Z[u, u^{-1}] \otimes_{\Z} \Z_p[[u_{1}]])\otimes \Q \\
    & \cong (\mathbb{Z}_{p}[[u_{1}]] \otimes_{\Z} \Q) \otimes_{\mathbb{Q}} \mathbb{Q}[\mu_{p^{2}-1}]\otimes_{\mathbb{Q}} \mathbb{Q}[u,u^{-1}].
\end{align*}
Here $\mu_{p^{2}-1}$ denotes the group of $(p^{2}-1)$ roots of unity. For brevity, we denote
\[\X:= \mathbb{Z}_{p}[[u_{1}]] \otimes_{\Z} \Q\]
in the rest of the paper. Next, taking the Hochschild homology with respect to $\Q$, we get
\begin{equation}\label{HH of rational E_2}
   HH_{\ast}^{\mathbb{Q}}(\pi_{\ast}(E_{2})_{\mathbb{Q}}) \cong HH_{\ast}^{\mathbb{Q}}(\X) \otimes_{\mathbb{Q}} HH_{\ast}^{\mathbb{Q}}\mathbb{Q}[\mu_{p^{2}-1}] \otimes_{\mathbb{Q}} HH_{\ast}^{\mathbb{Q}}\mathbb{Q}[u,u^{-1}]. 
\end{equation}

We now identify each of these Hochschild homology terms.

\begin{lemma}\label{HH of Q[u pm1]}
    $HH_{\ast}^{\mathbb{Q}}(\mathbb{Q}[u,u^{-1}]) \cong \mathbb{Q}[u,u^{-1}] \otimes_{\Q} \Lambda_{\mathbb{Q}}(du).$
\end{lemma}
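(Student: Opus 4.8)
The plan is to exploit that $\mathbb{Q}[u,u^{-1}]$ is smooth over $\mathbb{Q}$, which reduces the entire computation to the identification of first Hochschild homology via the Hochschild--Kostant--Rosenberg map.

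First I would observe that $\mathbb{Q}[u,u^{-1}]$ is the localization $\mathbb{Q}[u][u^{-1}]$ of the polynomial ring $\mathbb{Q}[u]$, and hence is smooth over $\mathbb{Q}$: the map $\mathbb{Q}[u]\to\mathbb{Q}[u,u^{-1}]$ is an open immersion, so it is \'etale, and the composite of the smooth map $\mathbb{Q}\to\mathbb{Q}[u]$ with an \'etale map is again smooth. Since smooth extensions are $H$-smooth by definition (as used in \cref{H-smooth pairs}), the map $\lambda_n^{A/k}$ of \cref{H-smooth} is an isomorphism for all $n$ with $A = \mathbb{Q}[u,u^{-1}]$ and $k = \mathbb{Q}$ (flatness over the field $\mathbb{Q}$ being automatic). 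Thus it suffices to compute $HH_1$ and take exterior powers.

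Next I would use that $HH_1^{\mathbb{Q}}(\mathbb{Q}[u,u^{-1}]) \cong \Omega^1_{\mathbb{Q}[u,u^{-1}]/\mathbb{Q}}$, the module of K\"ahler differentials, which holds for any commutative $\mathbb{Q}$-algebra. This module is free of rank one, generated by $du$ (subject to $d(u^{-1}) = -u^{-2}\,du$), so $HH_1^{\mathbb{Q}}(\mathbb{Q}[u,u^{-1}]) \cong \mathbb{Q}[u,u^{-1}]\,du$. Writing $R := \mathbb{Q}[u,u^{-1}]$, the exterior powers of this rank-one free $R$-module are $\Lambda^0 = R$, $\Lambda^1 = R\,du$, and $\Lambda^{\geq 2} = 0$, whence $HH_\ast^{\mathbb{Q}}(\mathbb{Q}[u,u^{-1}]) \cong R \oplus R\,du \cong \mathbb{Q}[u,u^{-1}] \otimes_{\mathbb{Q}} \Lambda_{\mathbb{Q}}(du)$, as claimed.

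There is no serious obstacle here; the only point deserving a word of care is the \emph{exterior} (rather than polynomial or divided-power) behavior of $du$. Because $|u| = 2$, the class $du$ lies in bidegree $(1,2)$ and so has odd total degree, so that graded-commutativity of $HH_\ast$ forces $du^2 = 0$ and the exterior algebra is indeed correct even in characteristic zero. As an alternative route that bypasses the smoothness/HKR input, one could instead invoke that Hochschild homology commutes with localization, giving $HH_\ast^{\mathbb{Q}}(\mathbb{Q}[u,u^{-1}]) \cong HH_\ast^{\mathbb{Q}}(\mathbb{Q}[u])[u^{-1}]$, and then feed in the standard computation $HH_\ast^{\mathbb{Q}}(\mathbb{Q}[u]) \cong \mathbb{Q}[u] \otimes_{\mathbb{Q}} \Lambda_{\mathbb{Q}}(du)$ for the polynomial ring.
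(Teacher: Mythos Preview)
Your proof is correct and rests on the same idea as the paper's: both arguments are applications of the Hochschild--Kostant--Rosenberg theorem. The only organizational difference is that you apply HKR directly to the smooth $\mathbb{Q}$-algebra $\mathbb{Q}[u,u^{-1}]$ and then compute its K\"ahler differentials, whereas the paper first invokes \'etale base change along $\mathbb{Q}[u]\to\mathbb{Q}[u,u^{-1}]$ and then applies HKR to the polynomial ring --- which is precisely the ``alternative route'' you mention at the end.
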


\begin{proof}
    This is due to étaleness of $\Q[u,u^{-1}]$ over $\mathbb{Q}[u]$ which from \cite[E.1.1.8]{MR1600246} implies
    \begin{equation}\label{étale}
       HH_{\ast}^{\mathbb{Q}}(\mathbb{Q}[u,u^{-1}]) \cong \mathbb{Q}[u,u^{-1}] \otimes_{\mathbb{Q}[u]} HH_{\ast}^{\mathbb{Q}}\mathbb{Q}[u], 
    \end{equation}
    and to smoothness of $\mathbb{Q}[u]$ over $\mathbb{Q}$ which from the theorem of Hochschild-Kostant-Rosenberg \cite{HKR}, \cite[Example 2.5]{krause2018lectures} implies
    \begin{equation}\label{smooth}
        HH_{\ast}^{\Q}\Q[u] \cong \Lambda_{\Q[u]}(du) \cong \Q[u] \otimes_{\Q} \Lambda_{\Q}(du).\qedhere
    \end{equation}
\end{proof}

\begin{lemma}\label{HH of Q[mu]}
$HH_{\ast}^{\mathbb{Q}}(\mathbb{Q}[\mu_{p^{2}-1}]) \cong \mathbb{Q}[\mu_{p^{2}-1}].$
\end{lemma}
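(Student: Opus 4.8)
The plan is to recognize $\Q[\mu_{p^{2}-1}]$ as an étale $\Q$-algebra and then invoke the vanishing of higher Hochschild homology in that case, exactly paralleling the proof of \cref{HH of Q[u pm1]}.

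First I would unwind the notation. Since the $(p^{2}-1)$-th roots of unity form a cyclic group, adjoining all of them to $\Q$ is the same as adjoining a single primitive root $\zeta_{p^{2}-1}$. Thus $\Q[\mu_{p^{2}-1}] \cong \Q(\zeta_{p^{2}-1})$ is the $(p^{2}-1)$-th cyclotomic field, a finite field extension of $\Q$. Concretely, $\Z[\mu_{p^{2}-1}]$ is the ring of integers of this field, and tensoring with $\Q$ inverts every nonzero integer, producing the fraction field.

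Next, because $\Q$ has characteristic zero, the extension $\Q(\zeta_{p^{2}-1})/\Q$ is separable, equivalently étale---that is, smooth of relative dimension zero---so the module of Kähler differentials $\Omega^{1}_{\Q(\zeta_{p^{2}-1})/\Q}$ vanishes. Applying the Hochschild--Kostant--Rosenberg isomorphism \cite{HKR}, \cite[Example 2.5]{krause2018lectures} in this smooth setting (just as in \cref{HH of Q[u pm1]}) gives $HH_{n}^{\Q}(\Q[\mu_{p^{2}-1}]) \cong \Omega^{n}_{\Q[\mu_{p^{2}-1}]/\Q}$, which equals $\Q[\mu_{p^{2}-1}]$ for $n=0$ and vanishes for $n \geq 1$ since all exterior powers of the zero module are zero. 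This yields the claimed concentration in homological degree zero.

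Alternatively, one can bypass HKR entirely: a finite separable field extension is a separable $\Q$-algebra, meaning $\Q[\mu_{p^{2}-1}]$ is projective as a module over its enveloping algebra $\Q[\mu_{p^{2}-1}] \otimes_{\Q} \Q[\mu_{p^{2}-1}]$. Since $HH_{n}^{\Q}(A) \cong \Tor_{n}^{A \otimes_{\Q} A}(A,A)$, projectivity forces the higher $\Tor$ groups to vanish and identifies $HH_{0}$ with $A$. There is no genuine obstacle here; the only point requiring a moment's care is confirming that $\Q[\mu_{p^{2}-1}]$ really is a field (hence étale over $\Q$) rather than a nontrivial product, which follows from the cyclicity of the group of roots of unity.
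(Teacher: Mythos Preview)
Your proof is correct and follows essentially the same approach as the paper: both arguments identify $\Q[\mu_{p^{2}-1}]$ as \'etale over $\Q$ and conclude immediately. The paper phrases the final step via the \'etale base-change formula $HH_{\ast}^{\Q}(\Q[\mu_{p^{2}-1}]) \cong \Q[\mu_{p^{2}-1}] \otimes_{\Q} HH_{\ast}^{\Q}(\Q)$ (as in the analogous step \eqref{étale} of \cref{HH of Q[u pm1]}), whereas you invoke HKR or separability directly; these are interchangeable one-liners. One minor remark: your closing parenthetical suggests that being a field is needed for \'etaleness, but even a nontrivial product of cyclotomic fields would be \'etale over $\Q$, so that verification is not actually required for the argument.
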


\begin{proof}
    This is from étaleness of $\mathbb{Q}[\mu_{p^{2}-1}]$ over $\mathbb{Q}$ and from \cite[E.1.1.8]{MR1600246}. We have
\begin{align*}
    HH_{\ast}^{\mathbb{Q}}(\mathbb{Q}[\mu_{p^{2}-1}]) & \cong
\mathbb{Q}[\mu_{p^{2}-1}] \otimes_{\Q} HH_{\ast}^{\mathbb{Q}} \mathbb{Q} \\
& \cong \mathbb{Q}[\mu_{p^{2}-1}].\qedhere
\end{align*}
\end{proof}

\begin{lemma}\label{HH of remaining term}
    $HH_{\ast}^{\Q}(\X) \cong \Lambda_{\X}HH_{1}^{\Q}(\X).$
\end{lemma}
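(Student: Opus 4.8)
The plan is to recognize this statement as essentially a restatement of the definition of $H$-smoothness applied to the pair $(\Q,\X)$, whose $H$-smoothness has already been established in the preliminaries. So no new computation is really needed; the work is all bookkeeping.

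First I would identify $\X = \Z_p[[u_1]]\otimes_\Z\Q$ with the ring $\Z_p[[t]]\otimes\Q$ appearing in \cref{H-smooth pairs}(3) (setting $t=u_1$). That observation, obtained by transitivity along the chain $(\Q,\Q_p)$, $(\Q_p,\Q_p[t])$, $(\Q_p[t],\Z_p[[t]]\otimes\Q)$, tells us exactly that the pair $(\Q,\X)$ is $H$-smooth. Next I would unwind \cref{H-smooth}: by definition, $H$-smoothness of $(\Q,\X)$ asserts that the Hochschild--Kostant--Rosenberg map
\[
\lambda_n^{\X/\Q}\colon \Lambda^n HH_1^{\Q}(\X) \longrightarrow HH_n^{\Q}(\X)
\]
is an isomorphism for every $n$ (the flatness clause of the definition is not needed for the present statement). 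Since $\lambda^{\X/\Q}$ is a map of graded-commutative $\X$-algebras and $HH_0^{\Q}(\X)=\X$, these degreewise isomorphisms assemble into an isomorphism of $\X$-algebras $HH_\ast^{\Q}(\X)\cong \Lambda_{\X}HH_1^{\Q}(\X)$, where the right-hand side is understood as $\bigoplus_n \Lambda^n_{\X}HH_1^{\Q}(\X)$. This is precisely the claim.

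There is essentially no obstacle here beyond this bookkeeping: all of the substance sits in the $H$-smoothness machinery of Larsen--Lindenstrauss recalled in \cref{HH of power series} and its consequences in \cref{H-smooth pairs}. The only points requiring a word of care are that the exterior algebra is formed over the ground ring $\X=HH_0^{\Q}(\X)$ rather than over $\Q$, and that it is the \emph{multiplicativity} of the HKR map that promotes the degreewise isomorphisms $HH_n^{\Q}(\X)\cong\Lambda^n_{\X}HH_1^{\Q}(\X)$ to an isomorphism of graded algebras.
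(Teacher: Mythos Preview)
Your proposal is correct and follows exactly the same route as the paper: the paper's proof is the single line ``This is from \cref{H-smooth pairs}(\ref{3}) and \cref{H-smooth},'' and your write-up simply unpacks that reference by identifying $\X$ with $\Z_p[[t]]\otimes\Q$, invoking the established $H$-smoothness of $(\Q,\X)$, and reading off the HKR isomorphism from the definition.
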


\begin{proof}
    This is from \cref{H-smooth pairs}(\ref{3}) and \cref{H-smooth}.
\end{proof}

Using \cref{HH of Q[mu]}, \cref{HH of Q[u pm1]} and \cref{HH of remaining term}, we rewrite \eqref{HH of rational E_2} as follows.

\begin{proposition}\label{HH of K(0)E2}
 \[HH_{\ast}^{K(0)_{\ast}}(K(0)_{\ast}E_{2})\\ 
 \cong K(0)_{\ast}E_{2} \otimes_{\X} \Lambda_{\X}HH_{1}^{\Q}(\X) \otimes_{\Q} \Lambda_{\Q}(du).\]
 Here, elements in $K(0)_{\ast}E_{2}$ have homological degree $0$, $du$ has homological degree $1$ and internal degree $2$ \eqref{du classes} and elements in $HH_{1}^{\Q}(\X)$ have homological degree $1$ and internal degree $0$.
\end{proposition}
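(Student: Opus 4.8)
The plan is to assemble the computation purely from the three Hochschild homology factors already identified, so that no new homological input is required. First I would substitute the conclusions of \cref{HH of Q[u pm1]}, \cref{HH of Q[mu]}, and \cref{HH of remaining term} into the tensor decomposition \eqref{HH of rational E_2}, yielding
$$HH_{\ast}^{\Q}(\pi_{\ast}(E_2)_{\Q}) \cong \Lambda_{\X}HH_1^{\Q}(\X) \otimes_{\Q} \Q[\mu_{p^2-1}] \otimes_{\Q} \bigl(\Q[u,u^{-1}] \otimes_{\Q} \Lambda_{\Q}(du)\bigr),$$
viewed as an isomorphism of bigraded commutative $K(0)_{\ast}E_2$-algebras, where I use \eqref{rationalization} to identify the left-hand side with $HH_{\ast}^{K(0)_{\ast}}(K(0)_{\ast}E_2)$.

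Next I would reorganize the tensor factors into the shape asserted in \cref{HH of K(0)E2}. The key observation is that $K(0)_{\ast}E_2 \cong \X \otimes_{\Q} \Q[\mu_{p^2-1}] \otimes_{\Q} \Q[u,u^{-1}]$ collects exactly the homological-degree-$0$ part of the right-hand side, except that the factor $\X$ has been enlarged to the $\X$-algebra $\Lambda_{\X}HH_1^{\Q}(\X)$. Concretely, regarding $\Lambda_{\X}HH_1^{\Q}(\X)$ as an $\X$-algebra and base-changing along $\X \to K(0)_{\ast}E_2$ gives
$$K(0)_{\ast}E_2 \otimes_{\X} \Lambda_{\X}HH_1^{\Q}(\X) \cong \Lambda_{\X}HH_1^{\Q}(\X) \otimes_{\Q} \Q[\mu_{p^2-1}] \otimes_{\Q} \Q[u,u^{-1}],$$
since $\X \otimes_{\X} \Lambda_{\X}HH_1^{\Q}(\X) = \Lambda_{\X}HH_1^{\Q}(\X)$ and the tensor product is associative. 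Tensoring both sides with $\Lambda_{\Q}(du)$ over $\Q$ then reproduces the first display, giving the claimed isomorphism.

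Finally I would record the grading. Homological degree is additive across the tensor factors, so the homological-degree-$1$ generators are precisely the classes of $HH_1^{\Q}(\X)$ (internal degree $0$, as $|u_1|=0$) together with $du$ (internal degree $2$, as $|u|=2$), matching \eqref{du classes}, while all of $K(0)_{\ast}E_2$ sits in homological degree $0$. The only point that needs care is not any single step but keeping the bookkeeping honest: one must check that the rearrangement is an isomorphism of $K(0)_{\ast}E_2$-\emph{algebras} and not merely of modules, and that the base change along $\X \to K(0)_{\ast}E_2$ leaves the exterior-algebra structure on $HH_1^{\Q}(\X)$ intact. Both hold because every map in sight is a map of commutative $\Q$-algebras and Hochschild homology of a tensor product of commutative algebras over a field splits multiplicatively, which is exactly the fact that produced \eqref{HH of rational E_2} to begin with.
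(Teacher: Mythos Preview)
Your proposal is correct and follows essentially the same approach as the paper: substitute the three lemmas into \eqref{HH of rational E_2}, then regroup the degree-zero factors into $K(0)_{\ast}E_2$ via the base change along $\X \to K(0)_{\ast}E_2$. The paper's proof is just the bare chain of isomorphisms you describe, without the additional commentary on gradings and algebra structure.
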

\begin{proof}
\begin{align*}
 &HH_{\ast}^{K(0)_{\ast}}(K(0)_{\ast}E_{2})\\ 
 &\cong \Lambda_{\X}HH_{1}^{\Q}(\X) \otimes_{\Q} \mathbb{Q}[\mu_{p^{2}-1}] \otimes_{\Q} \mathbb{Q}[u,u^{-1}] \otimes_{\Q} \Lambda_{\mathbb{Q}}(du)  \\
 & \cong (\X \otimes \Q[\mu_{p^{2}-1}] \otimes \Q[u,u^{-1}]) \otimes_{\X} \Lambda_{\X} HH_{1}^{\mathbb{Q}}(\X) \otimes_{\Q} \Lambda_{\Q}(du)\\
 & \cong K(0)_{\ast}E_{2} \otimes_{\X} \Lambda_{\X}HH_{1}^{\Q}(\X) \otimes_{\Q} \Lambda_{\Q}(du).
\end{align*}
\end{proof}

This then leads to the computation of $K(0)_{\ast}THH(E_{2})$.

\begin{theorem}\label{K0 homology of THHE_2}
We have the following isomorphism of $K(0)_{\ast}E_{2}$-algebras
\[K(0)_{\ast}THH(E_{2}) \cong K(0)_{\ast}E_{2} \otimes_{\X} \Lambda_{\X}HH_{1}^{\Q}(\X) \otimes_{\Q} \Lambda_{\Q}du.\]
\end{theorem}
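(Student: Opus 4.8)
The plan is to feed the computation of \cref{HH of K(0)E2} into the Bökstedt spectral sequence \eqref{BSS Equation} with $i=0$ and argue that it degenerates with no extension problems. By \cref{HH of K(0)E2}, the $\E^2$-page is
\[\E^2_{\ast,\ast} \cong K(0)_{\ast}E_{2} \otimes_{\X} \Lambda_{\X}HH_{1}^{\Q}(\X) \otimes_{\Q} \Lambda_{\Q}(du),\]
which is an exterior algebra over $K(0)_{\ast}E_{2}$ on the class $du$ together with a $\Q$-basis of $HH_{1}^{\Q}(\X)$. The key structural feature I would record at the outset is that all of these algebra generators sit in homological degree $r=1$ (with $du$ of internal degree $2$ and the elements of $HH_{1}^{\Q}(\X)$ of internal degree $0$), while the base ring $K(0)_{\ast}E_{2}$ sits in homological degree $0$.

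First I would prove collapse at $\E^2$. Since \eqref{BSS Equation} is a multiplicative spectral sequence of commutative $K(0)_{\ast}E_{2}$-algebras satisfying the Leibniz rule, each differential is determined by its restriction to the $K(0)_{\ast}E_{2}$-algebra generators. By \eqref{differentials of BSS} the differential lowers homological degree by $2$, so applied to any generator in homological degree $r=1$ it lands in $\E^2_{-1,\ast}=0$. Hence every generator is a permanent cycle, and by the Leibniz rule every product of generators is too; thus $d_2=0$ on the whole page and $\E^2=\E^\infty$.

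Next I would dispatch the extension problems, which is where one must be slightly careful but which is painless rationally. Because $K(0)=H\Q$, the whole spectral sequence lives in graded $\Q$-vector spaces, so every filtration quotient splits and $K(0)_{\ast}THH(E_{2})$ is determined as a $K(0)_{\ast}E_{2}$-module by $\E^\infty=\E^2$; the collapse at a finite stage also makes convergence strong, with no $\lim^1$ obstruction. To promote this to an isomorphism of $K(0)_{\ast}E_{2}$-\emph{algebras} I would lift the generators $du$ and the generators of $HH_{1}^{\Q}(\X)$ to $K(0)_{\ast}THH(E_{2})$ and observe that each has odd total degree ($du$ has total degree $3$ and the classes of $HH_{1}^{\Q}(\X)$ have total degree $1$); in a graded-commutative $\Q$-algebra an odd-degree element squares to zero, so the exterior relations are forced and there is no room for a multiplicative extension. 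This yields the claimed isomorphism of $K(0)_{\ast}E_{2}$-algebras. The only genuine content is the collapse argument, and it reduces entirely to the fact that the generators live in homological degree $1$ against a differential of homological degree $-2$; I do not expect any of the steps to present a real obstacle in the $i=0$ case, precisely because everything is rational.
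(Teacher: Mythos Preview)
Your proposal is correct and follows essentially the same route as the paper: plug \cref{HH of K(0)E2} into the B\"okstedt spectral sequence, observe that the algebra generators live in homological degree $1$ so all differentials vanish for degree reasons, and then use that the $\E^\infty$-page is free graded-commutative over $K(0)_*E_2$ (equivalently, your odd-degree argument over $\Q$) to rule out extensions. One cosmetic point: you phrase the collapse as ``$d_2=0$ hence $\E^2=\E^\infty$,'' but what you really used (and correctly stated just before) is that the generators are \emph{permanent cycles}, since the same degree argument kills every $d_r$ with $r\geq 2$; the paper simply says the spectral sequence collapses ``for degree reasons.''
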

\begin{proof}
\cref{HH of K(0)E2} gives us the $\E^2$ page of the Bökstedt spectral sequence \eqref{BSS Equation}. The multiplicative generators lie in homological degree (column) $1$. From the discussion in \eqref{differentials of BSS}, we conclude there are no non-zero differentials for degree reasons and the spectral sequence collapses on the $\E^2$ page.

Since the $\E^\infty$ page is free as a graded commutative $K(0)_{\ast}E_2$ algebra, there are no possible additive or multiplicative extensions. We get an isomorphism of $K(0)_{\ast}E_{2}$-algebras
\[ K(0)_{\ast}THH(E_{2}) \cong K(0)_{\ast}E_{2} \otimes_{\X} \Lambda_{\X}HH_{1}^{\Q}(\X) \otimes_{\Q} \Lambda_{\Q}du.\qedhere\] 
\end{proof}

\bigskip
\subsection{The case $i=1$ and $i=2$: $K(1)_{\ast}THH(E_{2})$ and $K(2)_{\ast}THH(E_2)$}

The calculations for $i=1$ and $i=2$ use similar techniques so we present them together.

Since
\begin{equation*}
    K(i)_{\ast}E_2 \cong E_{2 \ast}K(i)
    \end{equation*}
and from Landweber exactness of $E_2$ 
\begin{equation*}
    E_{2 \ast}K(i) \cong E_{2 \ast}\otimes_{BP_\ast}BP_\ast
 (K(i))
 \end{equation*}
we have $$K(i)_{\ast}E_2 \cong K(i)_{\ast}(BP) \otimes_{BP_{\ast}} (E_{2})_{\ast}.$$
Using the fact that $E_{2\ast}$ is flat over $E(2)_\ast$ and the fact that $p=0$ in $K(i)_*$, the second map in \eqref{BP maps} induces isomorphisms
\begin{equation}\label{K(i) of E2}
K(i)_{\ast}E_{2} \cong K(i)_{\ast}E(2) \otimes_{E(2)_{\ast}} E_{2{\ast}} \cong K(i)_{\ast}E(2) \otimes_{E(2)_{\ast}/p} E_{2{\ast}}/p
\end{equation}
for $i = 1, 2$. This gives 
\begin{equation}\label{HH of K(i)E2}
    HH_{\ast}^{K(i)_{\ast}}(K(i)_{\ast}E_{2}) \cong HH_{\ast}^{K(i)_{\ast}}(K(i)_{\ast}E(2) \otimes_{E(2)_{\ast}/p} E_{2\ast}/p).
\end{equation}

\begin{observation}\label{Steps E(2) to E2}
    The map $E(2)_{\ast}/p \to E_{2 \ast}/p$ induced from the second map in \eqref{BP maps} is flat. At the $\pi_{\ast}$-level, one gets $E_{2 \ast}/p$ from $E(2)_{\ast}/p$ by doing three things on the usual presentations \indent
\begin{enumerate}[leftmargin=*]
    \item adding a $(p^{2}-1)$th root of unity, $\mu_{p^2-1}$,
    \item adding $u$, the $(p^{2}-1)$th root of $v_{2}$ \eqref{Relation of u and v}, and using the generators  $u, u_1$,
    \item completing at the ideal $(u_{1})$.
\end{enumerate}
\end{observation}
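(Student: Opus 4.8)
The plan is to turn the three listed operations into an explicit factorization of the ring map on mod-$p$ homotopy and to read flatness off that factorization. First I would record the two presentations. Mod $p$ one has $E(2)_{\ast}/p \cong \F_p[v_1, v_2, v_2^{-1}]$, while $W\F_{p^2}/p \cong \F_{p^2}$ together with $\F_{p^2} \cong \F_p(\mu_{p^2-1})$ (valid because $\F_{p^2}^{\times}$ is cyclic of order $p^2-1$) gives $E_{2\ast}/p \cong \F_{p^2}[[u_1]][u, u^{-1}]$. By \eqref{Relation of u and v} the induced map sends $v_1 \mapsto u_1 u^{p-1}$ and $v_2 \mapsto u^{p^2-1}$, so the generators and completion visibly match the three operations; the task is to check that the stated factorization is correct and that each stage is flat.

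Next I would factor the map as the composite of the three steps and verify flatness stage by stage. Step $(1)$ is the scalar extension $\F_p[v_1, v_2^{\pm 1}] \hookrightarrow \F_{p^2}[v_1, v_2^{\pm 1}]$ adjoining $\mu_{p^2-1}$; it is the base change of the field extension $\F_p \subset \F_{p^2}$, hence free of rank $2$ and flat. Step $(2)$ is the map $\F_{p^2}[v_1, v_2^{\pm 1}] \to \F_{p^2}[u_1][u^{\pm 1}]$ determined by $v_2 \mapsto u^{p^2-1}$ and $v_1 \mapsto u_1 u^{p-1}$. Step $(3)$ is the $(u_1)$-adic completion $\F_{p^2}[u_1][u^{\pm 1}] \to \F_{p^2}[[u_1]][u^{\pm 1}]$, which is flat because completion of a Noetherian ring along an ideal is flat and flatness is preserved under the base change $\F_{p^2}[u_1] \to \F_{p^2}[u_1][u^{\pm 1}]$.

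The one step deserving care is $(2)$, which I expect to be the main (mild) obstacle. I would handle it as follows: tensoring the source along the free extension $\F_{p^2}[v_2^{\pm 1}] \to \F_{p^2}[u^{\pm 1}]$ (free with basis $1, u, \dots, u^{p^2-2}$, since $v_2 \mapsto u^{p^2-1}$) turns $\F_{p^2}[v_1, v_2^{\pm 1}]$ into $\F_{p^2}[u^{\pm 1}][v_1]$, and the remaining map $\F_{p^2}[u^{\pm 1}][v_1] \to \F_{p^2}[u^{\pm 1}][u_1]$ carries the polynomial generator $v_1$ to $u_1 u^{p-1}$, i.e.\ rescales it by the unit $u^{p-1}$, hence is an isomorphism. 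Thus step $(2)$ is free of rank $p^2-1$, and composing shows the whole map is flat while realizing exactly the operations $(1)$--$(3)$. The subtlety to flag is that one must use the full relation $v_1 = u_1 u^{p-1}$, not merely that $u$ is a $(p^2-1)$th root of $v_2$, so that replacing the generator $v_1$ by $u_1$ is an isomorphism of polynomial rings rather than a genuine change of module structure. As an independent confirmation of flatness one can also observe that $E_{2\ast}/p \cong E_{2\ast} \otimes_{E(2)_{\ast}} (E(2)_{\ast}/p)$ is flat over $E(2)_{\ast}/p$ by base change from the flatness of $E_{2\ast}$ over $E(2)_{\ast}$ already invoked above \eqref{K(i) of E2}.
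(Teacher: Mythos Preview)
Your proposal is correct and matches the paper's approach. The paper states this as an Observation without its own proof, but the three-step factorization you write down is exactly the one the paper spells out and uses in the proof of \cref{HH(E2) flat over HH(E(2))}: map~1 is declared \'etale (as a base change of $\F_p\subset\F_p[\mu_{p^2-1}]$), map~2 is \'etale (adjoining a $(p^2-1)$th root of the unit $v_2$ and replacing $v_1$ by the unit multiple $u_1 u^{p-1}$), and map~3 is the flat $(u_1)$-adic completion. Your freeness counts (rank~$2$ and rank~$p^2-1$) are a slightly sharper way of saying the same thing, and your independent base-change argument from the flatness of $E_{2\ast}$ over $E(2)_\ast$ is exactly the fact the paper invokes just above \eqref{K(i) of E2}.
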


Next, we need the computation of the right hand side of \eqref{HH of K(i)E2} in terms of the Hochschild homology of the $\F_p$-algebras $E(2)_{\ast}/p$ and $E_{2 \ast}/p$.  The following lemma is proved in several steps.

\begin{lemma} \label{HH split} For $i = 1,2$
    \[HH_{\ast}^{K(i)_{\ast}}(K(i)_{\ast}E(2) \otimes_{E(2)_{\ast}/p} E_{2 \ast}/p) \cong HH_{\ast}^{K(i)_{\ast}}(K(i)_{\ast}E(2)) \otimes_{HH_{\ast}^{\mathbb{F}_{p}}(E(2)_{\ast}/p)} HH_{\ast}^{\mathbb{F}_{p}}(E_{2 \ast}/p).\]
\end{lemma}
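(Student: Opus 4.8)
The plan is to prove this as an instance of flat base change for Hochschild homology. Reading \eqref{K(i) of E2} as the assertion that
\[
\begin{array}{ccc}
E(2)_{\ast}/p & \longrightarrow & K(i)_{\ast}E(2)\\
\downarrow & & \downarrow\\
E_{2\ast}/p & \longrightarrow & K(i)_{\ast}E_2
\end{array}
\]
is a cocartesian square of commutative $\F_p$-algebras, with $K(i)_{\ast}E_2\cong K(i)_{\ast}E(2)\otimes_{E(2)_{\ast}/p}E_{2\ast}/p$, the goal becomes a K\"unneth-type formula computing the Hochschild homology of the pushout from the Hochschild homologies of the three corners. The essential input will be the flatness of $E_{2\ast}/p$ over $E(2)_{\ast}/p$ recorded in \cref{Steps E(2) to E2}.

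First I would decompose the Hochschild complex levelwise. Writing $B=K(i)_{\ast}E(2)$, $A=E(2)_{\ast}/p$, and $C=E_{2\ast}/p$, a direct comparison of universal properties gives, in each simplicial degree $n$, a natural isomorphism of commutative rings
\[
(B\otimes_A C)^{\otimes_{K(i)_{\ast}}(n+1)}\;\cong\; B^{\otimes_{K(i)_{\ast}}(n+1)}\otimes_{A^{\otimes_{\F_p}(n+1)}}C^{\otimes_{\F_p}(n+1)};
\]
here one uses that $K(i)_{\ast}$ is an $\F_p$-algebra, so that both $B^{\otimes_{K(i)_{\ast}}(n+1)}$ and $C^{\otimes_{\F_p}(n+1)}$ are algebras over $A^{\otimes_{\F_p}(n+1)}$. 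Because the faces and degeneracies are all induced by multiplication and unit maps, these isomorphisms are compatible with the simplicial structure and assemble into an isomorphism of simplicial $K(i)_{\ast}$-modules
\[
N^{cy}_{K(i)_{\ast}}(B\otimes_A C)\;\cong\; N^{cy}_{K(i)_{\ast}}(B)\otimes_{N^{cy}_{\F_p}(A)}N^{cy}_{\F_p}(C).
\]
Notice that the base rings are handled automatically by this decomposition: the tensor powers of $B$ are taken over $K(i)_{\ast}$ while those of $A$ and $C$ are taken over $\F_p$, exactly matching the base rings appearing on the two sides of the lemma.

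Next I would pass to homotopy groups. Since $E_{2\ast}/p$ is flat over $E(2)_{\ast}/p$, each $C^{\otimes_{\F_p}(n+1)}$ is flat over $A^{\otimes_{\F_p}(n+1)}$, so the levelwise relative tensor product above already computes the derived relative tensor product of the simplicial $N^{cy}_{\F_p}(A)$-modules $N^{cy}_{K(i)_{\ast}}(B)$ and $N^{cy}_{\F_p}(C)$. Its homotopy is therefore governed by a K\"unneth spectral sequence
\[
\Tor_{s}^{HH_{\ast}^{\F_p}(A)}\!\big(HH_{\ast}^{K(i)_{\ast}}(B),\,HH_{\ast}^{\F_p}(C)\big)\;\Longrightarrow\; HH_{\ast}^{K(i)_{\ast}}(B\otimes_A C)
\]
with $E^2$-term built from $HH_{\ast}^{K(i)_{\ast}}(K(i)_{\ast}E(2))$, $HH_{\ast}^{\F_p}(E(2)_{\ast}/p)$, and $HH_{\ast}^{\F_p}(E_{2\ast}/p)$. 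To conclude I would show that this spectral sequence degenerates onto its $s=0$ line, leaving precisely the relative tensor product claimed in the lemma.

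The main obstacle is exactly this degeneration: I must promote the flatness of $E_{2\ast}/p$ over $E(2)_{\ast}/p$ to the statement that $HH_{\ast}^{\F_p}(E_{2\ast}/p)$ is flat over $HH_{\ast}^{\F_p}(E(2)_{\ast}/p)$, so that the higher $\Tor$-terms vanish. The most transparent route is to factor the extension $A\to C$ into the three operations of \cref{Steps E(2) to E2} and verify the base change one step at a time. Adjoining the root of unity $\mu_{p^2-1}$ and adjoining the generator $u$ with $u^{p^2-1}=v_2$ are both finite \'etale (the latter because $p^2-1$ is prime to $p$), and for such maps the corresponding Hochschild homology base change is standard and the $\Tor$-terms vanish for formal reasons. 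The genuinely delicate step is the $(u_1)$-completion, where flatness of the associated Hochschild homology is \emph{not} formal; here I would extract the needed flatness from the $H$-smoothness results of Larsen--Lindenstrauss (\cref{HH of power series} and \cref{H-smooth pairs}), which is precisely what controls the Hochschild homology of the power-series completion and makes the final collapse possible.
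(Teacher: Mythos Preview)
Your proposal is correct and follows essentially the same route as the paper: the paper packages the levelwise decomposition of the cyclic bar complex and the Tor/Eilenberg--Moore spectral sequence argument as a general statement (\cref{HH and flatness} together with \cref{EMSS}), and then verifies the needed flatness of $HH_\ast^{\F_p}(E_{2\ast}/p)$ over $HH_\ast^{\F_p}(E(2)_\ast/p)$ in \cref{HH(E2) flat over HH(E(2))} by exactly the three-step factorization through the two \'etale maps and the $(u_1)$-completion that you outline, invoking \cref{HH of power series} for the last step. The only cosmetic difference is that the paper isolates the general base-change principle as a standalone theorem before specializing.
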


The main step we need is the following theorem. \cref{HH(E2) flat over HH(E(2))} below proves that the hypotheses of \cref{HH and flatness} apply in our case.

\begin{theorem}\label{HH and flatness}
Let $k \rightarrow R \rightarrow S \rightarrow A$ and $k \rightarrow S \rightarrow B$ be maps of commutative rings. Regard $A \otimes_S B$ as an $R$ algebra via the map $R \cong R \otimes_S S\to A\otimes_S B$. If $B$ is flat over $S$ and $HH_{\ast}^{k}(B)$ is flat over $HH_{\ast}^{k}(S)$, then $$HH_{\ast}^{R}(A \otimes_{S} B) \cong HH_{\ast}^{R}(A) \otimes_{HH_{\ast}^{k}(S)} HH_{\ast}^{k}(B).$$
\end{theorem}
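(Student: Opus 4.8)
The plan is to prove the statement before passing to homotopy groups: I would first exhibit a natural isomorphism of simplicial commutative rings, and only then feed the two flatness hypotheses into a K\"unneth spectral sequence. Write $N^{cy}$ for the cyclic bar construction, so that $HH^R_\ast(A\otimes_S B)=\pi_\ast N^{cy}_R(A\otimes_S B)$, and similarly for the other terms. The commuting square with rows $k\to R$ and $S\to A$ (where $R\to A$ and $S\to A$ agree through $k\to R\to S\to A$) induces a map of simplicial rings $N^{cy}_k(S)\to N^{cy}_R(A)$, given in degree $n$ by $S^{\otimes_k(n+1)}\to A^{\otimes_R(n+1)}$, and the map $S\to B$ induces $N^{cy}_k(S)\to N^{cy}_k(B)$. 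These are exactly the $HH^k_\ast(S)$-module structures on $HH^R_\ast(A)$ and $HH^k_\ast(B)$ that appear in the target, so the right-hand side is well defined.

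First I would establish a natural isomorphism of simplicial commutative rings
\[ N^{cy}_R(A\otimes_S B)\;\cong\; N^{cy}_R(A)\otimes_{N^{cy}_k(S)} N^{cy}_k(B), \]
verified degreewise. In simplicial degree $n$ this is the purely algebraic claim that
\[ (A\otimes_S B)^{\otimes_R(n+1)}\;\cong\; A^{\otimes_R(n+1)}\otimes_{S^{\otimes_k(n+1)}} B^{\otimes_k(n+1)}, \]
with the forward map sending $\bigotimes_i (a_i\otimes b_i)$ to $(\bigotimes_i a_i)\otimes(\bigotimes_i b_i)$. The content is base-change bookkeeping: because $R$ acts on both $A$ and $B$ through $S$, the $R$-balancing on the left matches the $R$-balancing of $A^{\otimes_R(n+1)}$ on the right, while the internal $\otimes_S$ relations inside each factor are absorbed by balancing over $S^{\otimes_k(n+1)}$; one writes down the evident inverse and checks it respects these three families of relations. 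Since multiplication in $A\otimes_S B$ and the unit $1_A\otimes 1_B$ are componentwise, this isomorphism commutes with all face and degeneracy maps (including the cyclic last face), so it is an isomorphism of simplicial rings. No flatness is needed at this stage.

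Next I would pass to homotopy. Since $B$ is flat over $S$, the map $S^{\otimes_k(n+1)}\to B^{\otimes_k(n+1)}$ is flat (flatness of $S\to B$ is preserved under base change along each tensor factor, and one iterates), so $N^{cy}_k(B)$ is levelwise flat over $N^{cy}_k(S)$. This levelwise flatness guarantees that the levelwise tensor product computes the derived tensor product $N^{cy}_R(A)\otimes^{\bbL}_{N^{cy}_k(S)}N^{cy}_k(B)$, and it yields a convergent K\"unneth spectral sequence
\[ \E^2_{s,t}=\Tor^{HH^k_\ast(S)}_{s,t}\!\big(HH^R_\ast(A),\,HH^k_\ast(B)\big)\;\Longrightarrow\;HH^R_{s+t}(A\otimes_S B). \]
Finally, the second hypothesis, that $HH^k_\ast(B)$ is flat over $HH^k_\ast(S)$, forces $\Tor^{HH^k_\ast(S)}_{s}=0$ for $s>0$, so the spectral sequence collapses onto the line $s=0$ and its edge homomorphism gives the asserted isomorphism.

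I expect the main obstacle to be the degreewise isomorphism of the first step: getting it exactly right in the presence of two different base rings ($R$ for the $A$-factors, $k$ for the $S$- and $B$-factors) and confirming compatibility with the cyclic simplicial structure, rather than the spectral-sequence formalism, which is standard once the two flatness statements are in hand. It is worth emphasizing that the two hypotheses play cleanly separated roles: flatness of $B$ over $S$ makes the levelwise tensor product homotopically correct, while flatness of $HH^k_\ast(B)$ over $HH^k_\ast(S)$ collapses the spectral sequence.
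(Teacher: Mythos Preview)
Your proposal is correct and follows essentially the same route as the paper: a degreewise simplicial isomorphism $N^{cy}_R(A\otimes_S B)\cong N^{cy}_R(A)\otimes_{N^{cy}_k(S)}N^{cy}_k(B)$, then use flatness of $B$ over $S$ to identify this with the derived tensor product, then run a $\Tor$ spectral sequence which collapses by the second flatness hypothesis. The only cosmetic difference is that the paper isolates the spectral sequence as a separate lemma (an ``algebraic Eilenberg--Moore spectral sequence'') and supplies a proof via a resolution argument, remarking that it could not locate the statement in the literature, whereas you invoke it as a standard K\"unneth spectral sequence.
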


\begin{proof}
We use the notation of cyclic bar complex as mentioned in \cref{Notations},\eqref{HH,THH}. Note that as simplicial abelian groups
\begin{equation}\label{cyclic nerve of tensor}
   (N^{cy}_{\otimes_{R}})_{\bullet}(A \otimes_{S}B) \cong (N^{cy}_{\otimes_{R}})_{\bullet}(A) \otimes_{(N^{cy}_{\otimes_{k}})_{\bullet}(S)} (N^{cy}_{\otimes_{k}})_{\bullet}(B). 
\end{equation}
This is a quick check, one sees that the natural map at each simplicial level 
$$(A\otimes_S B)\otimes_R\ldots \otimes_R(A\otimes_S B) \to (A \otimes_R\ldots \otimes_R A)\otimes_{(S \otimes_k\ldots \otimes_k S)}(B \otimes_k\ldots \otimes_k B)$$
is an isomorphism and that these maps respect simplicial structure.

Since $B$ is flat over $S$, the right hand side in \eqref{cyclic nerve of tensor} represents the derived tensor product in simplicial modules $$(N^{cy}_{\otimes_{R}})_{\bullet} (A) \otimes^{\bbL}_{(N^{cy}_{\otimes_{k}})_{\bullet}(S)} (N^{cy}_{\otimes_{k}})_{\bullet} (B)$$ and hence also the derived tensor product in dg-modules 
\begin{equation}\label{cyclic nerve dg modules}
N^{cy}_{\otimes_{R}} (A) \otimes^{\bbL}_{N^{cy}_{\otimes_{k}} (S)} N^{cy}_{\otimes_{k}} (B).    
\end{equation}
Using this formula and taking homotopy groups on both sides of \eqref{cyclic nerve of tensor}, we see
\begin{equation}\label{HH of tensor}
HH_{\ast}^{R}(A\otimes_{S}B) \cong H_{\ast}(N^{cy}_{\otimes_{R}}(A) \otimes^{\bbL}_{N^{cy}_{\otimes_{k}}(S)} N^{cy}_{\otimes_{k}}(B)).
\end{equation}
Using \cref{EMSS} below with $M, N$ and $C$ being $N^{cy}_{\otimes_{R}}(A)$, $N^{cy}_{\otimes_{k}}(B)$ and $N^{cy}_{\otimes_{k}}(S)$ respectively, we have a spectral sequence
\[ \Tor^{HH_{\ast}^{k}S}_{\ast, \ast} (HH_{\ast}^{R}A, HH_{\ast}^{k}B) \Longrightarrow H_{\ast}(N^{cy}_{\otimes_{R}}(A) \otimes^{\bbL}_{N^{cy}_{\otimes_{k}}(S)} N^{cy}_{\otimes_{k}}(B)). \]
Since $HH_{\ast}^{k}(B)$ is flat over $HH_{\ast}^{k}(S)$, all the higher $\Tor$-terms vanish and the spectral sequence collapses, giving us
\[HH_{\ast}^{R}A \otimes_{HH_{\ast}^{k}S} HH_{\ast}^{k}B \cong H_{\ast}(N^{cy}_{\otimes_{R}}(A) \otimes_{N^{cy}_{\otimes_{k}}(S)} N^{cy}_{\otimes_{k}}(B)). \qedhere \]
\end{proof}

We need an algebraic analogue of the Eilenberg-Moore spectral sequence as in \cite[IV.6]{MR1417719} which is a consequence of Tor spectral sequence. We are using the same proof techniques as in \cite[IV.4.1]{MR1417719} and \cite[Section 6]{Lewis_2006}. We expect this spectral sequence is known to experts but we could not find it in literature.

\begin{lemma}\label{EMSS}
Let $C$ be a simplicial or dg-ring, $M$ a right $C$-module, and $N$ a left $C$-module. Then there exists an algebraic Eilenberg-Moore spectral sequence
\[Tor^{H_{\ast}C}_{p,q} (H_{\ast}M, H_{\ast}N) \Longrightarrow H_{p+q}(M \otimes_{C}^{\mathbb{L}}N).\]
\end{lemma}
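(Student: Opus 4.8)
The plan is to realize a free resolution of the graded $H_\ast C$-module $H_\ast M$ by a filtered \emph{cellular} dg-$C$-module and read off the resulting spectral sequence, following the template of the Künneth spectral sequence in \cite[IV.4.1]{MR1417719} and \cite[Section 6]{Lewis_2006}. In the simplicial case I would first pass to the associated normalized chain complexes via Dold--Kan, so that it suffices to treat a dg-ring $C$ with a right dg-module $M$ and a left dg-module $N$; the two modules play symmetric roles, and I will resolve $M$. Concretely, I would choose a free resolution
\[ \cdots \to F_1 \to F_0 \to H_\ast M \to 0 \]
of $H_\ast M$ as a graded $H_\ast C$-module, with each $F_s$ a free graded $H_\ast C$-module, and aim to lift it to the level of dg-$C$-modules.

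The heart of the argument is to construct a dg-$C$-module $\overline{M}$ together with a quasi-isomorphism $\overline{M} \xrightarrow{\ \sim\ } M$ and an exhaustive increasing filtration $0 = \mathcal{F}_{-1} \subseteq \mathcal{F}_0 \subseteq \mathcal{F}_1 \subseteq \cdots \subseteq \overline{M}$ whose successive quotients $\mathcal{F}_s/\mathcal{F}_{s-1}$ are free dg-$C$-modules, i.e. sums $\bigoplus_i \Sigma^{n_i} C$ of shifted copies of $C$, chosen so that on homology the filtration realizes the resolution $F_\bullet$. This is carried out by the standard cell-attachment procedure: I would map a free $C$-module $P_0$ to $M$ so as to hit a chosen set of $H_\ast C$-module generators of $H_\ast M$, form the fiber of $P_0 \to M$, and then iterate the construction on this fiber to install the higher syzygies $F_1, F_2, \dots$; assembling the $P_s$ produces the filtered module $\overline{M}$ with $\mathcal{F}_s/\mathcal{F}_{s-1}$ the free module realizing $F_s$. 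Because each $\mathcal{F}_s/\mathcal{F}_{s-1}$ is free over $C$, no flatness over the ground ring is needed at any point.

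Finally I would apply $-\otimes_C N$. Since $\overline{M}$ is built out of free $C$-modules and $\overline{M}\to M$ is a quasi-isomorphism, $\overline{M}\otimes_C N$ computes $M\otimes_C^{\bbL} N$, and the filtration on $\overline{M}$ induces a filtration on $\overline{M}\otimes_C N$ giving a spectral sequence. The $E^1$ page is identified using the crucial observation that a free module $\bigoplus_i \Sigma^{n_i} C$ satisfies $\bigl(\bigoplus_i \Sigma^{n_i} C\bigr)\otimes_C N \cong \bigoplus_i \Sigma^{n_i} N$, whose homology is $\bigl(\bigoplus_i \Sigma^{n_i} H_\ast C\bigr)\otimes_{H_\ast C} H_\ast N$; applied to the filtration quotients this yields $E^1_{\ast,\ast}\cong F_\bullet \otimes_{H_\ast C} H_\ast N$ with $d^1$ the differential of $F_\bullet$ tensored with $H_\ast N$, so that $E^2_{p,q}\cong \Tor^{H_\ast C}_{p,q}(H_\ast M, H_\ast N)$, converging to $H_{p+q}(M\otimes_C^{\bbL} N)$. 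The main obstacle is the cellular realization of the previous paragraph: one must carry it out so that the associated graded recovers $F_\bullet$ on homology while tracking both the resolution degree $p$ and the internal degree $q$, and one must check that the filtration is exhaustive and bounded enough for convergence. The latter holds in the intended applications because the cyclic bar complexes involved are connective, forcing the usual first-quadrant convergence; the $E^1$ and $E^2$ identifications are then formal.
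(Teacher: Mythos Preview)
Your proposal is correct and follows essentially the same approach as the paper: both realize a chosen free $H_\ast C$-resolution by a cellular filtration of dg-$C$-modules (the paper cites the same sources \cite[IV.4.1]{MR1417719} and \cite[\S6]{Lewis_2006}) and read off the filtration spectral sequence. The only cosmetic differences are that the paper resolves $N$ rather than $M$ and phrases the construction dually as an Adams-style tower of iterated cofibers $Y_{-1}\to Y_{0}\to\cdots$ with contractible colimit, rather than as your increasing cellular filtration on a model $\overline{M}$; these are equivalent packagings of the same argument.
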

\begin{proof}
Let $Y$ be a cofibrant approximation of $N$ so that $M \otimes_{C} Y$ represents the derived tensor product $M \otimes^{\bbL}_{C} N$. Choose a free $H_{\ast}C$ resolution of $H_{\ast}Y$
\[ \cdots \rightarrow F_{n} \xrightarrow{\overline{d_{n}}} \cdots \rightarrow F_{1} \xrightarrow{\overline{d_{1}}} F_{0} \xrightarrow{\overline{d_{0}}} H_{\ast}Y. \]
We can realize $\overline{d_{0}}$ as $H_{\ast}$ of a map of $C$-modules $$\bigoplus_{\text{gens of} \ F_{0}} \Sigma^{n_{\alpha_{0}}}C \xrightarrow{d_{0}} Y$$
where $F_{0}$ is graded, $n_{\alpha}$ is the internal degree, and the direct sum is over a chosen set of generators of $F_0$. Let $Y_{-1} = Y$ and $Y_{0}$ be the homotopy cofiber of the map $d_{0}$, where the cofiber map $Y_{-1} \to Y_0$ is denoted by $C_{d_0}$. We have
\begin{equation}\label{cofiber sequence}
   \bigoplus_{\text{gens of} \ F_{0}} \Sigma^{n_{\alpha_{0}}}C \xrightarrow{d_{0}} Y_{-1} \xrightarrow{C_{d_0}} Y_0 \to \Sigma (\bigoplus_{\text{gens of} \ F_{0}} \Sigma^{n_{\alpha_{0}}}C)
\end{equation}
and using the induced long exact sequence in homology, we have 
\begin{equation}\label{les}
    \cdots \to F_0 \xrightarrow[]{\overline{d_0}} H_{\ast}Y_{-1} \xrightarrow[]{\overline{C_{d_0}}} H_{\ast}Y_0 \to \Sigma F_0 \to \cdots.
\end{equation}
From the surjectivity of $\overline{d_{0}}$ on homology, we get 
$$\overline{C_{d_0}} = 0 \text{ and } H_{\ast}(Y_{0}) \cong \Sigma ker(\overline{d_{0}}),$$ the suspension of the kernel module. Next we realize $\Sigma \overline{d_{1}}$, similarly, as a map of $C$-modules $$\bigoplus_{\text{gens of} \ F_{1}} \Sigma^{n_{\alpha_{1}}+1}C \xrightarrow{\Sigma d_{1}} Y_{0}$$ which is surjective on homology. Let $Y_{1}$ be the cofiber of $\Sigma d_1$, and $C_{d_1}$ be the cofiber map from $Y_0$ to $Y_1$. Then, as above,
$$\overline{C_{d_1}} = 0 \text{ and } H_{\ast}(Y_{1}) \cong \Sigma^2 ker(\overline{d_{1}}).$$
Iterating this process gives a filtration
\[ Y_{-1} \xrightarrow[]{C_{d_0}} Y_{0} \xrightarrow[]{C_{d_1}} Y_{1} \to \cdots.\]
Let $Y_{\infty} = \hocolim \ Y_{i}$.  The maps in this system are zero on homology so $Y_{\infty}$ is quasi-isomorphic to $0$. Tensoring this filtration by $X$ over $N$, $X \otimes_{N} Y_{\infty}$ is also quasi-isomorphic to 0 and the filtration on $X \otimes_{N} Y_{\infty}$ gives a spectral sequence converging to 
$$H_{\ast}(X \otimes Y_{-1}) \cong M \otimes^{\bbL}_{C} N$$
where the associated graded is $X \otimes (\bigoplus \Sigma^{n_{\alpha}}N),$ the $\E^{1}$-term is $H_{\ast}X \otimes_{H_{\ast} N} F_{i}$ and the $\E^{2}$-term is the required $\Tor$ term.
\end{proof}

The next statement says that \cref{HH and flatness} can be applied on $$HH_{\ast}^{K(i)_{\ast}}(K(i)_{\ast}E(2) \otimes_{E(2)_{\ast}/p} E_{2 \ast}/p).$$

\begin{theorem}\label{HH(E2) flat over HH(E(2))}
$HH_{\ast}^{\mathbb{F}_{p}}(E_{2 \ast}/p)$ is flat over $HH_{\ast}^{\mathbb{F}_{p}}(E(2)_{\ast}/p)$. Moreover, $HH_{\ast}^{\mathbb{F}_{p}}(E_{2 \ast}/p)$ is isomorphic to
\[\bigg(HH_{\ast}^{\mathbb{F}_{p}}(E(2)_{\ast}/p) \otimes_{\mathbb{F}_{p}[v_{1}][v_{2},v_{2}^{-1}]} (\mathbb{F}_{p}[\mu_{p^{2}-1}][u_{1}][u, u^{-1}])\bigg) \otimes_{\F_{p}[u_{1}]} \bigg(\Lambda_{\mathbb{F}_{p}[[u_{1}]]}HH_{1}^{\F_{p}[u_{1}]}\F_{p}[[u_{1}]]\bigg).\]
\end{theorem}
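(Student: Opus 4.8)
The plan is to factor the map $E(2)_\ast/p \to E_{2\ast}/p$ through the \emph{uncompleted} intermediate ring of \cref{Steps E(2) to E2}, thereby separating the étale change of presentation from the $(u_1)$-completion. Set $A := E(2)_\ast/p = \F_p[v_1][v_2, v_2^{-1}]$ and $B := E_{2\ast}/p = \F_{p^2}[[u_1]][u, u^{-1}]$, and let $C := \F_p[\mu_{p^2-1}][u_1][u, u^{-1}]$ be the intermediate ring before completion. Because $\F_{p^2}$ is finite over $\F_p$, we have $C \cong \F_{p^2}[u, u^{-1}] \otimes_{\F_p} \F_p[u_1]$, and base-changing the last factor along $\F_p[u_1] \to \F_p[[u_1]]$ gives
\[
B \;\cong\; C \otimes_{\F_p[u_1]} \F_p[[u_1]].
\]

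First I would compute $HH_\ast^{\F_p}(B)$ by applying \cref{HH and flatness} with $k = R = \F_p$, $S = \F_p[u_1]$, $A' = C$ and $B' = \F_p[[u_1]]$. Its two hypotheses are that $\F_p[[u_1]]$ is flat over $\F_p[u_1]$ (true, as a completion of a Noetherian ring) and that $HH_\ast^{\F_p}(\F_p[[u_1]])$ is flat over $HH_\ast^{\F_p}(\F_p[u_1])$. For the latter, \cref{HH of power series} identifies
\[
HH_\ast^{\F_p}(\F_p[[u_1]]) \;\cong\; HH_\ast^{\F_p}(\F_p[u_1]) \otimes_{\F_p[u_1]} W, \qquad W := HH_\ast^{\F_p[u_1]}(\F_p[[u_1]]),
\]
a base change along $\F_p[u_1] \to HH_\ast^{\F_p}(\F_p[u_1])$ of the $\F_p[u_1]$-module $W$, which is flat over $\F_p[u_1]$ by the $H$-smoothness of $(\F_p[u_1], \F_p[[u_1]])$ from \cref{H-smooth pairs}(\ref{1}) and \cref{H-smooth}; flat base change then gives the needed flatness. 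Thus \cref{HH and flatness} yields $HH_\ast^{\F_p}(B) \cong HH_\ast^{\F_p}(C) \otimes_{HH_\ast^{\F_p}(\F_p[u_1])} HH_\ast^{\F_p}(\F_p[[u_1]])$, and substituting the previous display telescopes this to
\[
HH_\ast^{\F_p}(B) \;\cong\; HH_\ast^{\F_p}(C) \otimes_{\F_p[u_1]} W.
\]

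It remains to identify $HH_\ast^{\F_p}(C)$ with $HH_\ast^{\F_p}(A) \otimes_A C$, and I expect this to be the crux of the argument. By \eqref{Relation of u and v} the map $A \to C$ sends $v_2 \mapsto u^{p^2-1}$ and $v_1 \mapsto u_1 u^{p-1}$; since $u$ is invertible, $u_1 = v_1 u^{1-p}$ is redundant over $A[u]$ and $C \cong \F_{p^2} \otimes_{\F_p} A[u]/(u^{p^2-1} - v_2)$ is finite free over $A$ of rank $2(p^2-1)$. The characteristic-$p$ subtlety is separability: the defining polynomial $u^{p^2-1} - v_2$ has derivative $-u^{p^2-2}$, a unit because $u$ is invertible, so $A \to A[u]/(u^{p^2-1}-v_2)$ is étale; adjoining the separable extension $\F_{p^2}/\F_p$ keeps it étale, so $A \to C$ is finite étale. Étale base change for Hochschild homology (as in \cite[E.1.1.8]{MR1600246}) then gives $HH_\ast^{\F_p}(C) \cong HH_\ast^{\F_p}(A) \otimes_A C$. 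Combining this with the previous display and recalling $W \cong \Lambda_{\F_p[[u_1]]} HH_1^{\F_p[u_1]}(\F_p[[u_1]])$ from the $H$-smoothness of $(\F_p[u_1],\F_p[[u_1]])$ produces exactly the claimed isomorphism.

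For the flatness assertion, the same decomposition $HH_\ast^{\F_p}(B) \cong \bigl(HH_\ast^{\F_p}(A) \otimes_A C\bigr) \otimes_{\F_p[u_1]} W$ does the work: the factor $HH_\ast^{\F_p}(A) \otimes_A C$ is free over $HH_\ast^{\F_p}(A)$ since $C$ is free over $A$, and tensoring over $\F_p[u_1]$ with the $\F_p[u_1]$-flat module $W$ preserves flatness over $HH_\ast^{\F_p}(A)$ (a routine exactness check, using that $W$ is flat over $\F_p[u_1]$). Hence $HH_\ast^{\F_p}(E_{2\ast}/p)$ is flat over $HH_\ast^{\F_p}(E(2)_\ast/p)$. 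The main obstacle throughout is the étale-base-change step: one must make the change of generators from $(v_1, v_2)$ to $(\mu_{p^2-1}, u_1, u)$ explicit enough to verify étaleness in characteristic $p$, after which the remaining identifications assemble formally.
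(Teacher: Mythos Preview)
Your proof is correct and follows essentially the same strategy as the paper: factor $E(2)_\ast/p \to E_{2\ast}/p$ into an étale piece (your $A \to C$, which the paper splits into two étale steps) followed by the $(u_1)$-completion, handle the étale part by étale base change for $HH$, and handle the completion via \cref{HH of power series} together with the $H$-smoothness of $(\F_p[u_1],\F_p[[u_1]])$. The one organizational difference is that you invoke \cref{HH and flatness} to pass from $HH_\ast^{\F_p}(C)$ to $HH_\ast^{\F_p}(B)$, whereas the paper reaches the same formula by the Künneth splitting $HH_\ast^{\F_p}(B) \cong HH_\ast^{\F_p}(\F_{p^2}[u,u^{-1}]) \otimes_{\F_p} HH_\ast^{\F_p}(\F_p[[u_1]])$ over the field $\F_p$; since $C \cong \F_{p^2}[u,u^{-1}] \otimes_{\F_p} \F_p[u_1]$, these two routes collapse to the same computation.
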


\begin{proof}
Note
\begin{align}\label{Mod p homotopy}
   E(2)_{\ast}/p \cong \mathbb{F}_{p}[v_{1}][v_{2}, v_{2}^{-1}], \text{ and } E_{2 \ast}/p \cong \mathbb{F}_{p}[\mu_{p^{2}-1}][[u_{1}]][u, u^{-1}]. 
\end{align}
From \cref{Steps E(2) to E2} we have the following sequence of labelled maps
\[\mathbb{F}_{p}[v_{1}][v_{2}, v_{2}^{-1}] \xrightarrow{1} \mathbb{F}_{p}[\mu_{p^{2}-1}][v_{1}][v_{2}, v_{2}^{-1}] \xrightarrow{2} \mathbb{F}_{p}[\mu_{p^{2}-1}][u_{1}][u, u^{-1}] \xrightarrow{3} \mathbb{F}_{p}[\mu_{p^{2}-1}][[u_{1}]][u, u^{-1}].\]
Map $1$ is unramified and flat, hence étale. From \cite[E.1.1.8]{MR1600246}, we have
\begin{equation}\label{HH eqn1}
\begin{aligned}
& HH_{\ast}^{\F_{p}}(\mathbb{F}_{p}[\mu_{p^{2}-1}][v_{1}][v_{2}, v_{2}^{-1}])\\ 
& \quad \cong HH_{\ast}^{\F_{p}}(\mathbb{F}_{p}[v_{1}][v_{2}, v_{2}^{-1}]) \otimes_{\mathbb{F}_{p}[v_{1}][v_{2}, v_{2}^{-1}]} \mathbb{F}_{p}[\mu_{p^{2}-1}][v_{1}][v_{2}, v_{2}^{-1}]\\
& \quad \cong HH_{\ast}^{\F_{p}}(\mathbb{F}_{p}[v_{1}][v_{2}, v_{2}^{-1}]) \otimes_{\F_p} \mathbb{F}_{p}[\mu_{p^{2}-1}].
\end{aligned}
\end{equation}
We conclude 
\begin{equation}\label{map 1 flat}
    HH_{\ast}^{\F_{p}}(\mathbb{F}_{p}[v_{1}][v_{2}, v_{2}^{-1}]) \rightarrow HH_{\ast}^{\F_{p}}(\mathbb{F}_{p}[\mu_{p^{2}-1}][v_{1}][v_{2}, v_{2}^{-1}])
\end{equation}
is flat.

From \eqref{Relation of u and v}, map $2$ sends 
$$v_{1} \mapsto u_{1}u^{p-1} \text{ and } v_{2} \mapsto u^{p^{2}-1}.$$ 
In other words, its attaching a ($p^{2}-1$)-root of a unit and replacing a variable with a unit times that variable. Hence map $2$ is étale and gives
\begin{equation}\label{HH eqn2}
\begin{aligned}
& HH_{\ast}^{\F_{p}}(\mathbb{F}_{p}[\mu_{p^{2}-1}][u_{1}][u,u^{-1}]) \\
& \quad \cong HH_{\ast}^{\F_{p}}(\mathbb{F}_{p}[\mu_{p^{2}-1}][v_{1}][v_{2},v_{2}^{-1}]) \otimes_{\mathbb{F}_{p}[\mu_{p^{2}-1}][v_{1}][v_{2},v_{2}^{-1}]} \mathbb{F}_{p}[\mu_{p^{2}-1}][u_{1}][u,u^{-1}]\\
& \quad \cong HH_{\ast}^{\F_{p}}(\mathbb{F}_{p}[\mu_{p^{2}-1}][v_{1}][v_{2},v_{2}^{-1}]) \otimes_{\mathbb{F}_{p}[\mu_{p^{2}-1}][v_{1}][v_{2},v_{2}^{-1}]} \mathbb{F}_{p}[\mu_{p^{2}-1}][v_{1}][u, u^{-1}].
\end{aligned}
\end{equation}
Since $u$ is a $p^2-1$ root of the image of $v_2$, this is a free module extension of 
\begin{equation}\label{flat 2}
  HH_{\ast}^{\F_{p}}(\mathbb{F}_{p}[\mu_{p^{2}-1}][v_{1}][v_{2},v_{2}^{-1}])  
\end{equation}
and hence flat.

Map $3$ is the completion at ideal $(u_{1})$ and hence flat. Taking Hochschild homology relative to $\F_p$ on both sides of this map, we have 
\begin{equation}\label{HH on map3}
HH_{\ast}^{\F_{p}}(\mathbb{F}_{p}[u_{1}]) \otimes HH_{\ast}^{\F_{p}}(\mathbb{F}_{p}[\mu_{p^{2}-1}][u, u^{-1}]) \rightarrow HH_{\ast}^{\F_{p}}(\mathbb{F}_{p}[[u_{1}]]) \otimes HH_{\ast}^{\F_{p}}(\mathbb{F}_{p}[\mu_{p^{2}-1}][u, u^{-1}])
\end{equation}
where the tensors are over $\F_p$.
Applying \cref{HH of power series} 
\begin{equation}\label{HH of Fp power series}
   HH_{\ast}^{\mathbb{F}_{p}}\mathbb{F}_{p}[[u_{1}]] \cong (HH_{\ast}^{\mathbb{F}_{p}}\mathbb{F}_{p}[u_{1}]) \otimes_{\mathbb{F}_{p}[u_{1}]} (\F_{p}[[u_{1}]] \oplus \Lambda^{\geq 1}_{\mathbb{F}_{p}[[u_{1}]]}V_{\F_{p}}) 
\end{equation}
where $V_{\F_{p}}$ is a vector space over $\F_{p}((u_{1}))$ and $\F_{p}((u_{1}))$ is a localization of $\F_{p}[[u_{1}]]$, we conclude
\begin{equation}\label{flat 3}
    HH_{\ast}^{\mathbb{F}_{p}}\mathbb{F}_{p}[u_{1}] \rightarrow (HH_{\ast}^{\mathbb{F}_{p}}\mathbb{F}_{p}[u_{1}]) \otimes_{\mathbb{F}_{p}[u_{1}]} (\F_{p}[[u_{1}]] \oplus \Lambda^{\geq 1}_{\mathbb{F}_{p}[[u_{1}]]}V_{\F_{p}})
\end{equation}
is a map into an infinite vector space of a localization and thus flat. 

From \eqref{map 1 flat}, \eqref{flat 2} and \eqref{flat 3}, $HH_{\ast}^{\mathbb{F}_{p}}(E_{2 \ast}/p)$ is flat over $HH_{\ast}^{\mathbb{F}_{p}}(E(2)_{\ast}/p)$. In fact,

\begin{align*}
    & HH_{\ast}^{\mathbb{F}_{p}} \mathbb{F}_{p}[\mu_{p^{2}-1}][[u_{1}]][u, u^{-1}]\\
    & \qquad \cong HH_{\ast}^{\mathbb{F}_{p}} \mathbb{F}_{p}[\mu_{p^{2}-1}][u, u^{-1}] \otimes_{\F_p} HH_{\ast}^{\mathbb{F}_{p}}\mathbb{F}_{p}[[u_{1}]]\\
    & \qquad \overset{\eqref{HH of Fp power series}}{\cong} HH_{\ast}^{\mathbb{F}_{p}} \mathbb{F}_{p}[\mu_{p^{2}-1}][u_{1}][u, u^{-1}] \otimes_{{F}_{p}[u_{1}]} \Lambda_{\mathbb{F}_{p}[[u_{1}]]}HH_{1}^{\F_{p}[u_{1}]}\F_{p}[[u_{1}]]
\end{align*}
 Applying \eqref{HH eqn1} and \eqref{HH eqn2} on the last term, we have the result.
\end{proof}

From \cref{HH(E2) flat over HH(E(2))} and \cref{HH and flatness}, we see \cref{HH split} holds.

Finally, we compute $K(1)$ and $K(2)$ homologies of $THH(E_2)$ in the results below.
\begin{theorem}\label{K(1) of THH(E2)}
We have the following isomorphism of $K(1)_{\ast}E_{2}$-algebras
\[K(1)_{\ast}THH(E_{2}) \cong K(1)_{\ast}E_2 \otimes_{{\F}_{p}[[u_{1}]]} \Lambda_{\mathbb{F}_{p}[[u_{1}]]}HH_{1}^{\F_{p}[u_{1}]}\F_{p}[[u_{1}]] \otimes_{\F_{p}} \Lambda_{\F_{p}} dt_{1}.\]
\end{theorem}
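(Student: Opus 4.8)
The plan is to feed the Hochschild homology computations assembled above into the Bökstedt spectral sequence \eqref{BSS Equation} at $i=1$ and argue, exactly as in the proof of \cref{K0 homology of THHE_2}, that it collapses and carries no extensions. The first task is to identify the $\E^2$-page as an algebra. Combining \cref{HH split} with the Ausoni--Richter input $HH_{\ast}^{K(1)_{\ast}}(K(1)_{\ast}E(2)) \cong K(1)_{\ast}E(2)\otimes_{\F_p}\Lambda_{\F_p}(dt_1)$ from \cref{HH of E(2)} and the explicit description of $HH_{\ast}^{\F_p}(E_{2\ast}/p)$ from \cref{HH(E2) flat over HH(E(2))}, I would obtain
\[ \E^2 \cong \big(K(1)_{\ast}E(2)\otimes_{\F_p}\Lambda_{\F_p}(dt_1)\big)\otimes_{HH_{\ast}^{\F_p}(E(2)_{\ast}/p)} HH_{\ast}^{\F_p}(E_{2\ast}/p). \]

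The second task is to simplify this tensor product into the claimed shape. The factor $HH_{\ast}^{\F_p}(E(2)_{\ast}/p)$ displayed in \cref{HH(E2) flat over HH(E(2))} cancels against the relative tensor product above, so that, writing $R := K(1)_{\ast}E(2)\otimes_{\F_p[v_1][v_2,v_2^{-1}]}\F_p[\mu_{p^2-1}][u_1][u,u^{-1}]$, I am left with
\[ \E^2 \cong R \otimes_{\F_p[u_1]}\Lambda_{\F_p[[u_1]]}HH_1^{\F_p[u_1]}\F_p[[u_1]] \otimes_{\F_p}\Lambda_{\F_p}(dt_1). \]
Because the exterior factor is already a module over $\F_p[[u_1]]$, the base change $R\otimes_{\F_p[u_1]}(-)$ factors through $R\otimes_{\F_p[u_1]}\F_p[[u_1]]$, which by \eqref{K(i) of E2} is precisely the completion $K(1)_{\ast}E_2$. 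This pulls the tensor product over to $\F_p[[u_1]]$ and rewrites the $\E^2$-page as
\[ K(1)_{\ast}E_2 \otimes_{\F_p[[u_1]]}\Lambda_{\F_p[[u_1]]}HH_1^{\F_p[u_1]}\F_p[[u_1]] \otimes_{\F_p}\Lambda_{\F_p}dt_1, \]
which is the asserted answer.

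It then remains to show the spectral sequence degenerates to this. Every multiplicative generator, namely $dt_1$ and the generators of $HH_1^{\F_p[u_1]}\F_p[[u_1]]$, lies in homological degree $1$, while $K(1)_{\ast}E_2$ lies in homological degree $0$. Since the differential \eqref{differentials of BSS} strictly lowers homological degree (by two on the $\E^2$-page, and by more thereafter), it annihilates every generator for degree reasons, hence vanishes identically by the Leibniz rule, and the sequence collapses at $\E^2$. Finally, the $\E^\infty$-page is free as a graded-commutative $K(1)_{\ast}E_2$-algebra with all exterior generators in odd total degree ($dt_1$ in degree $2p-1$, the power-series classes in degree $1$), so, just as for $i=0$, freeness over $K(1)_{\ast}E_2$ forbids additive extensions while the odd-degree exterior generators forbid multiplicative ones.

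Given the heavy lifting already carried out in \cref{HH split} and \cref{HH(E2) flat over HH(E(2))}, the only genuinely delicate point I anticipate is the algebraic bookkeeping of the two middle paragraphs: correctly cancelling the relative tensor over $HH_{\ast}^{\F_p}(E(2)_{\ast}/p)$ and absorbing the completion at $(u_1)$ so that the first two tensor factors really do collapse to $K(1)_{\ast}E_2$. The collapse and the absence of extensions are then immediate, mirroring the $i=0$ argument.
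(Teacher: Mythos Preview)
Your proposal is correct and follows essentially the same route as the paper's proof: invoke \cref{HH split}, \cref{HH of E(2)}, and \cref{HH(E2) flat over HH(E(2))} to identify the $\E^2$-page, absorb the $(u_1)$-completion via \eqref{K(i) of E2} to reach the stated algebra, and then argue degeneration and absence of extensions exactly as in \cref{K0 homology of THHE_2}. The only cosmetic difference is that the paper substitutes the explicit formula of \cref{HH(E2) flat over HH(E(2))} directly rather than first writing the intermediate tensor over $HH_{\ast}^{\F_p}(E(2)_{\ast}/p)$, but the algebraic content is identical.
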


\begin{proof}
From \eqref{HH of K(i)E2}, \cref{HH split} and \cref{HH(E2) flat over HH(E(2))}, $HH_{\ast}^{K(1)_{\ast}}(K(1)_{\ast}E_{2})$ is isomorphic to
$$(HH_{\ast}^{K(1)_{\ast}}K(1)_{\ast}E(2)\otimes_{\mathbb{F}_{p}[v_{1}][v_{2},v_{2}^{-1}]} \mathbb{F}_{p}[\mu_{p^{2}-1}][u_{1}][u, u^{-1}]) \otimes_{{F}_{p}[u_{1}]} 
\Lambda_{\mathbb{F}_{p}[[u_{1}]]}HH_{1}^{\F_{p}[u_{1}]}\F_{p}[[u_{1}]],$$
which from \cref{HH of E(2)} is isomorphic to
$$((K(1)_{\ast}E(2) \otimes \Lambda dt_{1}) \otimes_{\mathbb{F}_{p}[v_{1}][v_{2},v_{2}^{-1}]} \mathbb{F}_{p}[\mu_{p^{2}-1}][u_{1}][u, u^{-1}]) \otimes_{{F}_{p}[u_{1}]} 
\Lambda_{\mathbb{F}_{p}[[u_{1}]]}HH_{1}^{\F_{p}[u_{1}]}\F_{p}[[u_{1}]].$$
To use \eqref{K(i) of E2}, we rewrite this as
$$((K(1)_{\ast}E(2) \otimes \Lambda dt_{1}) \otimes_{\mathbb{F}_{p}[v_{1}][v_{2},v_{2}^{-1}]} \mathbb{F}_{p}[\mu_{p^{2}-1}][[u_{1}]][u, u^{-1}]) \otimes_{{F}_{p}[[u_{1}]]} 
\Lambda_{\mathbb{F}_{p}[[u_{1}]]}HH_{1}^{\F_{p}[u_{1}]}\F_{p}[[u_{1}]]$$ and then use the equation to get
$$HH_{\ast}^{K(1)_{\ast}}(K(1)_{\ast}E_{2}) \cong K(1)_{\ast}E_2 \otimes_{{F}_{p}[[u_{1}]]} \Lambda_{\mathbb{F}_{p}[[u_{1}]]}HH_{1}^{\F_{p}[u_{1}]}\F_{p}[[u_{1}]] \otimes_{\F_p} \Lambda_{\F_p} dt_{1}.$$
Here, elements of $K(1)_{\ast}E_{2}$ have homological degree $0$, $dt_{1}$ is as in \eqref{dt_1 class} and has homological degree $1$ and internal degree $2p^{2}-2$ and the elements of $HH_{1}^{\mathbb{F}_{p}[u_{1}]}\mathbb{F}_{p}[[u_{1}]]$ have homological degree $1$ and internal degree $0$.

As in the proof of \cref{K0 homology of THHE_2}, the multiplicative generators live in homological degree (column) $1$ and so the Bökstedt spectral sequence collapses on $\E^{2}$-page due to degree reasons. This is again a free graded commutative $K(1)_{\ast}E_2$ algebra on the first column, so there are no possible additive or multiplicative extensions and we have an isomorphism of $K(1)_{\ast}E_2$-algebras as in the statement of the result.
\end{proof}

\begin{theorem}\label{K(2) of THH(E2)}
The unit map \eqref{unit map} is a $K(2)$-equivalence, i.e., it induces an isomorphism of $K(2)_{\ast}E_{2}$-algebras $$K(2)_{\ast}E_{2} \cong K(2)_{\ast}THH(E_{2}).$$
\end{theorem}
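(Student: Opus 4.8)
The plan is to follow the template of the $i=0,1$ cases: identify the $\E^2$-page of the Bökstedt spectral sequence \eqref{BSS Equation} with $HH_{\ast}^{K(2)_{\ast}}(K(2)_{\ast}E_2)$, show that this is concentrated in homological degree $0$, and then read off that the spectral sequence collapses onto its edge column, which is the map induced by the unit \eqref{unit map}. First I would assemble the $\E^2$-page exactly as in the proof of \cref{K(1) of THH(E2)}, combining \eqref{HH of K(i)E2}, \cref{HH split}, \cref{HH(E2) flat over HH(E(2))}, and the $i=2$ clause of \cref{HH of E(2)} (which says $HH_{\ast}^{K(2)_{\ast}}K(2)_{\ast}E(2)\cong K(2)_{\ast}E(2)$, concentrated in degree $0$), to get
\[
HH_{\ast}^{K(2)_{\ast}}(K(2)_{\ast}E_2)\cong\Bigl(K(2)_{\ast}E(2)\otimes_{\F_p[v_1][v_2,v_2^{-1}]}\F_p[\mu_{p^2-1}][u_1][u,u^{-1}]\Bigr)\otimes_{\F_p[u_1]}\Lambda_{\F_p[[u_1]]}HH_1^{\F_p[u_1]}\F_p[[u_1]].
\]

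The essential difference from the $i=0,1$ cases is that here $HH_{\ast}^{K(2)_{\ast}}K(2)_{\ast}E(2)$ contributes nothing above homological degree $0$ (there is no surviving $dt_1$ or $dt_2$), so the first tensor factor sits entirely in degree $0$ and the only positive-degree classes come from the power-series factor $\Lambda_{\F_p[[u_1]]}HH_1^{\F_p[u_1]}\F_p[[u_1]]$. For $i=1$ these classes survive into the answer, so the main obstacle is to show that here they are killed. The key point is that $u_1$ acts as $0$ on the first factor. Indeed, by \eqref{u,v,t} we have $u_1=u^{1-p}v_1=u^{1-p}pt_1$, and since $p=0$ in $K(2)_{\ast}$ the image of $v_1$ in $K(2)_{\ast}E_2$ vanishes (concretely, the image of $v_1\in E_{2\ast}$ is carried by the right unit $\eta_R(v_1)=v_1+pt_1$, which dies because $p=0$ and $v_1\mapsto 0$ in $K(2)_{\ast}=\F_p[v_2,v_2^{-1}]$); as $u$ is invertible this forces $u_1=0$ in the first factor.

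Now I would play off the two incompatible ways $u_1$ acts across the tensor product over $\F_p[u_1]$. On the degree-$0$ factor $u_1$ acts as $0$, whereas by \eqref{power series vs field of fractions} the positive-degree part of $\Lambda_{\F_p[[u_1]]}HH_1^{\F_p[u_1]}\F_p[[u_1]]$ is isomorphic to $\Lambda^{\geq 1}_{\F_p((u_1))}V_{\F_p}$, a module over $\F_p((u_1))$ on which $u_1$ acts \emph{invertibly}. Tensoring over $\F_p[u_1]$ a module on which $u_1=0$ against a $u_1$-divisible module annihilates the latter, so the entire positive-degree part vanishes and only the degree-$0$ summand $\F_p[[u_1]]/(u_1)=\F_p$ survives. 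Hence $HH_{\ast}^{K(2)_{\ast}}(K(2)_{\ast}E_2)$ is concentrated in homological degree $0$, where it is tautologically $HH_0^{K(2)_{\ast}}(K(2)_{\ast}E_2)=K(2)_{\ast}E_2$.

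Finally, with the $\E^2$-page concentrated in the column $r=0$, the Bökstedt spectral sequence collapses immediately, with no room for differentials \eqref{differentials of BSS} or extensions, so $K(2)_{\ast}THH(E_2)\cong K(2)_{\ast}E_2$. This isomorphism is realized by the edge homomorphism in the $r=0$ column, which is the map on $K(2)$-homology induced by the unit \eqref{unit map}; hence the unit map is a $K(2)$-equivalence. I expect the one delicate point to be the bookkeeping of which unit carries $v_1$ to $0$ in $K(2)_{\ast}E(2)$ and checking that this is the module structure actually entering \cref{HH(E2) flat over HH(E(2))}; once $u_1=0$ is established, the remainder is the vanishing $\F_p((u_1))/(u_1)=0$ together with the collapse of the spectral sequence.
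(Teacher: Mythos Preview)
Your proposal is correct and follows essentially the same approach as the paper: assemble the $\E^2$-page via \cref{HH split} and \cref{HH(E2) flat over HH(E(2))}, use that $u_1$ acts by zero on the $K(2)_{\ast}E_2$-side (via $\eta_R(v_1)=0$ in $K(2)_{\ast}$) while acting invertibly on $\Lambda^{\geq 1}_{\F_p[[u_1]]}HH_1^{\F_p[u_1]}\F_p[[u_1]]$, conclude the positive homological degrees vanish, and collapse the Bökstedt spectral sequence onto its zeroth column, which is the unit map. The only cosmetic difference is that the paper first extends scalars from $\F_p[u_1]$ to $\F_p[[u_1]]$ to rewrite the left factor as $K(2)_{\ast}E_2$ before invoking $u_1=0$, whereas you argue directly on the displayed tensor product; the substance is identical.
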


\begin{proof}
As above, we use \eqref{HH of K(i)E2}, \cref{HH split} and \cref{HH(E2) flat over HH(E(2))}, to get $HH_{\ast}^{K(2)_{\ast}}(K(2)_{\ast}E_{2})$ isomorphic to
\[
   (K(2)_{\ast}E(2) \otimes_{\mathbb{F}_{p}[v_{1}][v_{2},v_{2}^{-1}]} \mathbb{F}_{p}[\mu_{p^{2}-1}][u_{1}][u, u^{-1}]) \otimes_{{\F}_{p}[u_{1}]} 
\Lambda_{\mathbb{F}_{p}[[u_{1}]]}HH_{1}^{\F_{p}[u_{1}]}\F_{p}[[u_{1}]].
\]
Extending scalars in the middle tensor factor from $\mathbb{F}_p[u_1]$ to $\mathbb{F}_{p}[[u_{1}]]$, we see that this is isomorphic to 
\[(K(2)_{\ast}E(2) \otimes_{\mathbb{F}_{p}[v_{1}][v_{2},v_{2}^{-1}]} \mathbb{F}_{p}[\mu_{p^{2}-1}][[u_{1}]][u, u^{-1}]) \otimes_{{\F}_{p}[[u_{1}]]} 
\Lambda_{\mathbb{F}_{p}[[u_{1}]]}HH_{1}^{\F_{p}[u_{1}]}\F_{p}[[u_{1}]].\]
Using \eqref{K(i) of E2} and \eqref{Mod p homotopy}, we can rewrite this as
\[
K(2)_{\ast}E_2 \otimes_{{\F}_{p}[[u_{1}]]} 
\Lambda_{\mathbb{F}_{p}[[u_{1}]]}HH_{1}^{\F_{p}[u_{1}]}\F_{p}[[u_{1}]].
\]
Using the formulas in \cite[B.5, Pg 167-171]{MR1192553}, we see that $\eta_R(v_1) = 0$ in $K(2)_*E_2$, and it follows that $u_1$ acts by $0$ on $K(2)_*E_2$. Since $HH_{1}^{\F_{p}[u_{1}]}\F_{p}[[u_{1}]]$ is a $[u_1,u_{1}^{-1}]$-module (above \eqref{field of fractions}), we have 
\[
K(2)_{\ast}E_2 \otimes_{{\F}_{p}[[u_{1}]]} 
\Lambda^n_{\mathbb{F}_{p}[[u_{1}]]}HH_{1}^{\F_{p}[u_{1}]}\F_{p}[[u_{1}]]=0
\]
for $n\geq 1$, and so
\[
K(2)_{\ast}E_2 \otimes_{{\F}_{p}[[u_{1}]]} 
\Lambda_{\mathbb{F}_{p}[[u_{1}]]}HH_{1}^{\F_{p}[u_{1}]}\F_{p}[[u_{1}]]
\cong
K(2)_{\ast}E_2 \otimes_{{\F}_{p}[[u_{1}]]} 
{\F}_{p}[[u_{1}]]
\cong K(2)_\ast E_2.  
\]
We conclude
$$HH_{\ast}^{K(2)_{\ast}}(K(2)_{\ast}E_{2}) \cong K(2)_{\ast}E_{2}.$$
In the Bökstedt spectral sequence, only the zeroth column is non-zero, and therefore the sequence collapses to give
\[K(2)_{\ast}THH(E_{2})\cong HH_{\ast}^{K(2)_{\ast}}(K(2)_{\ast}E_{2}) \cong K(2)_{\ast}E_{2}.\]
Since this map is induced by the unit map $E_{2} \rightarrow THH(E_{2})$, we conclude that the unit map is a $K(2)$-equivalence.
\end{proof}

\subsection{The case $i>2$: $K(i)_*THH(E_2)=0$} Since $E_2$ is $L_2$-local, for $i>2$
\begin{equation*}
    K(i)_\ast E_2 = 0.
\end{equation*}
Therefore, for $i>2$,
\begin{equation*}
    HH_{\ast, \ast}^{K(i)_{\ast}}K(i)_\ast E_2 = 0.
\end{equation*}
And from the Bökstedt spectral sequence, we get the result:
\begin{lemma}\label{higher i}
 For $i>2$, $K(i)_*THH(E_2)=0$. The spectrum $THH(E_2)$ is $L_2$-local.
\end{lemma}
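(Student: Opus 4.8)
The plan is to treat the two assertions of the lemma in turn --- the vanishing $K(i)_{\ast}THH(E_2)=0$ for $i>2$, and the $L_2$-locality of $THH(E_2)$ --- and to note at the end that the second in fact subsumes the first. For the vanishing I would first record that $K(i)_{\ast}E_2=0$ for every $i>2$, which is exactly the input flagged in the text preceding the statement: it follows from $E_2$ being $L_2$-local together with the standard fact that an $E(n)$-local spectrum is $K(i)$-acyclic for $i>n$ (equivalently, from Landweber exactness $K(i)_{\ast}E_2\cong K(i)_{\ast}\otimes_{BP_{\ast}}BP_{\ast}E_2$, in which $v_i$ must be invertible but maps to $0$ for $i>2$). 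Once $K(i)_{\ast}E_2=0$, the $\E^2$-page of the B\"okstedt spectral sequence \eqref{BSS Equation}, namely $HH_{\ast,\ast}^{K(i)_{\ast}}(K(i)_{\ast}E_2)$, is the Hochschild homology of the zero $K(i)_{\ast}$-algebra and hence vanishes identically; convergence then forces $K(i)_{\ast}THH(E_2)=0$.

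For $L_2$-locality I would use the $E_2$-module structure on $THH(E_2)$ recorded in \cref{section unit map}, together with the Hopkins--Ravenel smashing theorem, which asserts that $L_2=L_{E(2)}$ is smashing, i.e.\ $L_2X\simeq X\wedge L_2\bbS$ for all $X$. Since $E_2$ is $L_2$-local and $L_2$ is smashing, $E_2\wedge L_2\bbS\simeq L_2E_2\simeq E_2$. Because $THH(E_2)$ is an $E_2$-module, associativity of the relative smash product (the projection formula) then gives
\[
L_2\,THH(E_2)\;\simeq\;THH(E_2)\wedge L_2\bbS\;\simeq\;THH(E_2)\wedge_{E_2}\big(E_2\wedge L_2\bbS\big)\;\simeq\;THH(E_2)\wedge_{E_2}E_2\;\simeq\;THH(E_2),
\]
so the localization map is an equivalence and $THH(E_2)$ is $L_2$-local. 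This argument is really just the statement that every module over an $L_2$-local ring spectrum is itself $L_2$-local, applied to $E_2$.

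I expect the only genuinely delicate point to be making the middle equivalence in the display precise --- justifying $THH(E_2)\wedge L_2\bbS\simeq THH(E_2)\wedge_{E_2}(E_2\wedge L_2\bbS)$ as an instance of associativity of the (relative) smash product, and invoking the smashing theorem at exactly the right spot. Everything else is formal. Finally, I would remark that the implication also runs the other way: once $L_2$-locality is established, the vanishing $K(i)_{\ast}THH(E_2)=0$ for $i>2$ is automatic, since an $L_2$-local spectrum $X$ satisfies $K(i)\wedge X\simeq (K(i)\wedge L_2\bbS)\wedge X\simeq 0$ for $i>2$. Thus the two claims are two faces of the same fact, and either the B\"okstedt computation or the module-theoretic argument suffices on its own.
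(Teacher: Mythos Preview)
Your argument is correct and aligns with the paper's. For the vanishing $K(i)_{\ast}THH(E_2)=0$ when $i>2$, you follow exactly the paper's route: $K(i)_{\ast}E_2=0$, hence the $\E^2$-page of the B\"okstedt spectral sequence vanishes, hence so does the target. For $L_2$-locality, the paper is terse --- it simply records the conclusion alongside the vanishing --- whereas you spell out the standard argument that any module over an $L_2$-local ring spectrum is $L_2$-local, via the smashing theorem and the projection formula $THH(E_2)\wedge L_2\bbS\simeq THH(E_2)\wedge_{E_2}(E_2\wedge L_2\bbS)$. This is almost certainly the justification the paper has in mind, so the difference is one of explicitness rather than strategy. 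Your closing observation that $L_2$-locality already implies the $K(i)$-acyclicity for $i>2$ is a nice coda, though the paper does not make this reduction explicit.
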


\section{Lifting $K(i)_{\ast}THH(E_{2})$ classes to $\pi_{\ast}THH(E_{2})$ classes}\label{Lifting classes}

The purpose of this section is to lift certain $K(i)$-homology classes of $THH(E_2)$ to homotopy classes along the Hurewicz map 
$$\pi_{\ast}THH(E_{2}) \to K(i)_{\ast}THH(E_{2}).$$
We lift the 
classes represented by elements of $K(i)_{\ast}E_{2}$ in \cref{lift of unit}, the 
elements of $$HH_{1}^{\mathbb{F}_{p}[u_{1}]}\mathbb{F}_{p}[[u_{1}]] \in K(1)_{\ast}THH(E_{2})$$
in \cref{lift of Fp[[u1]]} and the 
class $dt_{1} \in K(1)_{\ast}THH(E_{2})$ in \cref{lift of dt}. 

To do the lifting, we use the commutative diagram below for $i = 0,1,2$. 
\begin{equation}\label{Commutative square}
\begin{tikzcd}
\pi_{\ast}(E_{2}) \otimes \pi_{\ast}(E_{2}) \arrow{r}{\theta} \arrow[swap]{d}{\Psi_{E_2}\otimes\Psi_{E_2}} & \pi_{\ast}(E_{2} \wedge E_{2}) \arrow{d}{\Psi_{E_2\wedge E_2}} \\
K(i)_{\ast}E_{2} \otimes_{K(i)_{\ast}} K(i)_{\ast}E_{2} \arrow{r}{\cong} & K(i)_{\ast}(E_{2} \wedge E_{2})
\end{tikzcd}
\end{equation}
The horizontal maps are the usual homology pairing maps, and the vertical maps are the Hurewicz maps.  Henceforth, we write just $\Psi$ for the Hurewicz maps when the source and target are clear.  The isomorphism in the diagram above is from \eqref{Künneth} and these isomorphic terms are column $1$ of $\E^{1}$-page of Bökstedt spectral sequence \eqref{BSS Equation}. Note also, that $\pi_{\ast}(E_{2}\wedge E_{2})$ is column $1$ of $\E^{1}$-page of filtration spectral sequence 
$$\E^{1}_{s,t} := \pi_{t}(E_{2}^{\wedge s+1}) \Longrightarrow \pi_{s+t}THH(E_{2})$$
arising from the simplicial filtration of $THH(E_2)$.
$\Psi$ extends to a map 
\begin{equation}\label{extension of psi}
    \pi_{t}(E_{2}^{\wedge s+1}) \to K(i)_{t}(E_{2}^{\wedge s+1})
\end{equation}
for each $s,t$ and it commutes with the respective differentials of the spectral sequences associated to each term, thus making it a map of $\E^1$-onwards pages of spectral sequences. Due to naturality, this map converges to the Hurewicz map $$\pi_{\ast}THH(E_{2}) \to K(i)_{\ast}THH(E_{2}).$$

\subsection{Lifting $K(i)_{\ast}E_{2}$}\label{lift of unit}
From \cref{K0 homology of THHE_2}, \cref{K(1) of THH(E2)}, \cref{K(2) of THH(E2)} $K(i)_{\ast}E_{2}$ is the zeroth homological degree term in $HH_{\ast}^{K(i)_{\ast}}(K(i)_{\ast}E_{2}) \cong K(i)_{\ast}THH(E_{2})$ for $i = 0,1,2$. From \cref{section unit map}, this is induced from the unit map which splits. The homological classes $K(i)_\ast E_2$ correspond exactly to the homotopy classes of $E_2$ in $\pi_\ast THH(E_2)$.

\subsection{Lifting the class $dt_{1}$ from $K(1)_{\ast}THH(E_{2})$}
From \eqref{t classes 1}, \eqref{t_1 class}, \eqref{degenerate class}, \eqref{dt_1 class}, we have 
$$t_{1} \in \pi_{2p-2}(E_2 \wedge E_2)$$
which maps along $\Psi$ to $$1 \otimes t_1 \in K(i)_{\ast}E_2 \otimes_{K(i)_{\ast}} K(i)_{\ast}E_2$$ and is denoted by $$dt_{1} \in K(i)_{2p-1}THH(E_{2}).$$

We show

\begin{proposition}\label{lift of dt}
There exists a class $\lambda_{1} \in THH_{2p-1}(E_{2})$ such that under Hurewicz homomorphism
$$\lambda_{1} \mapsto dt_{1} \in K(1)_{2p-1}THH(E_{2}).$$ Further, $$\Psi(\lambda_1) \in K(0)_{2p-1}THH(E_{2})$$ is a $K(0)_{\ast}E_{2}$-linear combination of classes $du$ and $du_{1}$ as in \eqref{du classes}.
\end{proposition}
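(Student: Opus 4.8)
The plan is to realize $\lambda_1$ as a homotopy class detected in filtration $1$ of the simplicial filtration spectral sequence $\E^1_{s,t}=\pi_t(E_2^{\wedge s+1})\Longrightarrow \pi_{s+t}THH(E_2)$ by the class $t_1\in\pi_{2p-2}(E_2\wedge E_2)=\E^1_{1,2p-2}$ of \eqref{t classes 1}. The Hurewicz maps for $K(0)$ and $K(1)$ both refine to maps of spectral sequences out of this homotopy spectral sequence into the respective Bökstedt spectral sequences \eqref{BSS Equation}, converging to the two Hurewicz maps on $THH(E_2)$ via \eqref{extension of psi}. Once $\lambda_1$ is built, computing its two Hurewicz images reduces to identifying the image of $t_1$ in the first column of each Bökstedt spectral sequence, plus a check that no lower-filtration correction terms intervene.

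First I would check that $t_1$ is a permanent cycle in the homotopy spectral sequence. On the column $s=1$ the $d^1$-differential is the alternating sum $(d_0)_\ast-(d_1)_\ast$ of the two face maps $E_2\wedge E_2\to E_2$, and these multiplication maps agree on homotopy because $E_2$ is homotopy commutative, so $d^1=0$ on all of column $1$. For $r\ge 2$ the differential out of $\E^r_{1,2p-2}$ lands in column $1-r<0$ and so vanishes. To see that $t_1$ moreover survives nonzero to $\E^\infty$, and hence lifts to a class $\lambda_1\in F_1\pi_{2p-1}THH(E_2)\subseteq THH_{2p-1}(E_2)$, I would argue by naturality: an incoming boundary $t_1=d^r(x)$ would map under $\Psi$ to an expression exhibiting $dt_1=\Psi(t_1)$ as a boundary in the $K(1)$-Bökstedt spectral sequence; but that spectral sequence collapses at $\E^2$ by \cref{K(1) of THH(E2)}, where $dt_1$ is a nonzero exterior generator, a contradiction.

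Given such a $\lambda_1$, its $K(1)$-Hurewicz image is detected in filtration $1$ by $\Psi(t_1)=dt_1$, so $\Psi(\lambda_1)=dt_1$ up to a filtration-$0$ term in $K(1)_{2p-1}E_2$. Since every generator of $K(1)_\ast E_2$ lies in even internal degree, $K(1)_{2p-1}E_2=0$ and the correction vanishes, proving the first claim. The same parity argument gives $K(0)_{2p-1}E_2=\pi_{2p-1}(E_2)_\Q=0$ because $E_{2\ast}$ is concentrated in even degrees, so the $K(0)$-Hurewicz image of $\lambda_1$ equals the Hochschild class detected by $\Psi(t_1)=[1\otimes t_1]$. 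Using the relation \eqref{u,v,t}, which reads $t_1=\tfrac1p u^{p-1}u_1$ in the rationalization $K(0)_\ast E_2$, together with the fact that $a\mapsto[1\otimes a]$ is a derivation into $HH_1^{K(0)_\ast}(K(0)_\ast E_2)$, I would compute
\[
\Psi(\lambda_1)=dt_1=\tfrac1p\big((p-1)u^{p-2}u_1\,du+u^{p-1}\,du_1\big),
\]
which is the desired $K(0)_\ast E_2$-linear combination of $du$ and $du_1$ in the sense of \eqref{du classes}.

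The permanent-cycle and derivation computations are routine; the main obstacle is the filtration bookkeeping, namely upgrading the associated-graded identity $\Psi(t_1)=dt_1$ to an honest equality of Hurewicz images. This is exactly what the parity vanishing $K(i)_{2p-1}E_2=0$ for $i=0,1$ accomplishes, and it is the one input that must be verified rather than assumed. A secondary point to be careful about is that the relation \eqref{u,v,t} must be applied in the right-hand tensor factor, where $u,u_1$ are the images $1\otimes u,1\otimes u_1$ of \eqref{u classes 2}, so that the Leibniz computation produces genuine $HH_1$-classes rather than degenerate simplicial ones.
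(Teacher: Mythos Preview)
Your argument is correct and follows essentially the same route as the paper: both lift $t_1\in\pi_{2p-2}(E_2\wedge E_2)$ through the map of filtration spectral sequences \eqref{extension of psi} and then identify the $K(0)$-image via the relation \eqref{u,v,t}. The paper's proof is terser and leaves implicit two points you make explicit---that $t_1$ is a permanent cycle in the homotopy spectral sequence (your naturality argument using nonvanishing of $dt_1$ in the collapsed $K(1)$-B\"okstedt $\E^\infty$), and that the filtration-$0$ ambiguity vanishes by the parity observation $K(i)_{2p-1}E_2=0$---so your write-up is if anything more complete.
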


\begin{proof}
    Using \eqref{Commutative square} and the discussion in \eqref{extension of psi}, for $i=1$, we have a commutative diagram
\[\begin{tikzcd}[row sep=large, column sep=large]
t_{1} \in \pi_{2p-2}(E_2 \wedge E_2) \ar[r, dotted] \ar[d, "\Psi"] & THH_{2p-1}(E_{2})\ar[d, "\Psi"]\\
1 \otimes t_{1} \in K(1)_{\ast}E_2 \otimes_{K(1)_{\ast}} K(1)_{\ast}E_2 \ar[r, dotted] & dt_{1} \in K(1)_{2p-1}THH(E_{2})
\end{tikzcd}\]
where the horizontal dotted arrows depict how surviving classes of a spectral sequence map from the first page of spectral sequence to the final one and then to a class of the object the spectral sequence converges to.
Since the diagram commutes, the image of $t_1$ along the top horizontal arrow is the required lift. We denote this class as $\lambda_{1}$ following \cite{MR1209233}. By the same argument applied to $i=0$, $\lambda_{1}$ also maps to the class $$dt_{1} \in K(0)_{2p-1}THH(E_{2}).$$ This class is related to the classes $$du, du_{1} \in K(0)_{\ast}THH(E_{2})$$ via \eqref{u,v,t}. We get $$du_{1} = d(u^{1-p}pt_{1}) = (1-p)pt_{1}u^{-p}du + pu^{1-p}dt_{1}$$
in $K(0)_{\ast}THH(E_{2})$ giving us the claimed linear combination.
\end{proof}

\subsection{Lifting the classes $HH_{1}^{\F_{p}[u_{1}]}\F_{p}[[u_{1}]]$ from $K(1)_{\ast}THH(E_{2})$} 
Due to \eqref{power series vs field of fractions}, we work with $$HH_{1}^{\F_{p}(u_{1})}\F_{p}((u_{1})).$$ From \cite[Example 3.3]{MR1853116}, this is an infinite dimensional vector space over $\F_{p}((u_{1}))$ with dimension the cardinality of the continuum. Let $\{s\}_I$ be a basis set of $\F_{p}((u_{1}))$ over $\F_{p}(u_{1})$, then we can choose a basis set for $$\F_{p}((u_{1})) \otimes_{\F_{p}(u_{1})} \F_{p}((u_{1}))$$ 
over $\F_{p}((u_{1}))$ (where the latter acts on former the way it would act on each level of the Hochschild complex, in our case on the left tensor factor) from the set $\{1 \otimes s\}_I$. Without loss of generality, we choose $\{s\}_I$ so that each $s \in \{s\}_I$ satisfies $$s \in \F_{p}[[u_{1}]]$$ and hence has (non-unique) lifts to $\Z[[u_{1}]]$ and $\Z_{p}[[u_{1}]]$. We denote such a lift of $s$ by $s'$. Further, $HH_{1}^{\F_{p}(u_{1})}\F_{p}((u_{1}))$ is a quotient of $\F_{p}((u_{1}))\otimes_{\F_{p}(u_{1})} \F_{p}((u_{1})),$
so we have a basis 
\begin{equation}\label{notation B}
    \mathfrak{B} \subset \{1 \otimes s\}_I
\end{equation}
of $HH_{1}^{\F_{p}(u_{1})}\F_{p}((u_{1}))$ over $\F_{p}((u_{1}))$. Choose a lift $s'$ for each $s$ such that $(1\otimes s) \in \B$. Then there are classes 
$$1 \otimes s' \in \pi_{0}E_{2} \otimes \pi_{0}E_{2}$$
since $$\pi_{0}E_{2} \otimes \pi_{0}E_{2} \cong \mathbb{Z}_{p}[\mu_{p^{2}-1}][[u_{1}]] \otimes \mathbb{Z}_{p}[\mu_{p^{2}-1}][[u_{1}]].$$
These classes map to $$\Psi(1 \otimes s') \in K(1)_{0}E_{2} \otimes_{K(1)_0} K(1)_{0}E_{2}$$ on $\E^1$-page of the Bökstedt spectral sequence \eqref{E1 page BSS} and further down to $[1 \otimes s] \in HH_{(1,0)}^{K(1)_{\ast}}K(1)_{\ast}E_{2}$ on $\E^2$-page. We lift each element of $\B$.

\begin{proposition}\label{lift of Fp[[u1]]}
For each $(1 \otimes s) \in \mathfrak{B}$, there exists a class $\tilde{s} \in THH_{1}E_{2}$, such that under Hurewicz homomorphism each $\tilde{s}$ maps to the corresponding class $$(1 \otimes s) \in K(1)_{1}THH(E_{2}).$$ 
\end{proposition}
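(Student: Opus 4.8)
The plan is to repeat the diagram chase used in the proof of \cref{lift of dt}, but starting one internal degree lower, in bidegree $(s,t)=(1,0)$. For each $(1\otimes s)\in\mathfrak{B}$ the discussion above has already fixed a lift $s'\in\pi_0 E_2$ and the class $1\otimes s'\in\pi_0 E_2\otimes\pi_0 E_2$, whose Hurewicz image $\Psi(1\otimes s')=1\otimes s$ represents $[1\otimes s]\in HH_{(1,0)}^{K(1)_*}K(1)_*E_2$. First I would apply the pairing map $\theta$ of \eqref{Commutative square} to obtain
\[
\theta(1\otimes s')\in\pi_0(E_2\wedge E_2)=\E^1_{1,0}
\]
in the filtration spectral sequence $\E^1_{s,t}=\pi_t(E_2^{\wedge s+1})\Rightarrow\pi_{s+t}THH(E_2)$, and then take $\tilde s$ to be a class that $\theta(1\otimes s')$ survives to.

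The steps, in order, are as follows. First, $\theta(1\otimes s')$ is a $d^1$-cycle: the two face maps $\pi_0(E_2\wedge E_2)\to\pi_0 E_2$ are the two multiplications, both of which send $1\otimes s'$ to $s'$, so $d^1=d_0-d_1$ annihilates it. Second, there are no higher differentials out of $\E^r_{1,0}$, since the target $\E^r_{1-r,\,r-1}$ vanishes for $r\ge2$ (the first index is negative). Hence $\theta(1\otimes s')$ is a permanent cycle; moreover the only bidegrees contributing to $\pi_1 THH(E_2)$ are $(0,1)$ and $(1,0)$, so $F_1\pi_1=\pi_1$ and the class lifts to an honest element $\tilde s\in THH_1(E_2)$. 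Third, I would push the whole picture along the Hurewicz map, regarded as a map of spectral sequences as in \eqref{extension of psi}: by \eqref{Commutative square}, $\theta(1\otimes s')$ maps on the $\E^1$-page to $1\otimes s$, which represents $[1\otimes s]$ in the Bökstedt spectral sequence, and the latter is a permanent cycle because that sequence collapses at $\E^2$ (\cref{K(1) of THH(E2)}). By naturality of the map of spectral sequences and its convergence to $\Psi\colon\pi_*THH(E_2)\to K(1)_*THH(E_2)$, the images $\Psi(\tilde s)$ and $1\otimes s$ coincide in $\E^\infty_{1,0}$ of the Bökstedt spectral sequence.

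The one delicate point, and where I expect the genuine (if modest) content to lie, is upgrading this agreement in the associated graded to an equality in $K(1)_1 THH(E_2)$ itself. This is settled by a parity count: the filtration-zero piece $F_0 K(1)_1 THH(E_2)$ is a subquotient of $K(1)_1 E_2$, and by \eqref{K(i) of E2} and \eqref{Mod p homotopy} the ring $K(1)_*E_2$ is concentrated in even total degree, so $K(1)_1 E_2=0$. With the indeterminacy gone, $\Psi(\tilde s)=1\otimes s$ exactly, which is the assertion. The first two steps are routine degree bookkeeping; the real work is the careful matching of the filtration and Bökstedt spectral sequences in the third step, together with this final vanishing.
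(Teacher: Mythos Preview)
Your proof is correct and follows the same route as the paper: both use the commutative square \eqref{Commutative square} and the map of spectral sequences \eqref{extension of psi} to transport $\theta(1\otimes s')\in\pi_0(E_2\wedge E_2)$ to a class $\tilde s\in THH_1(E_2)$ whose Hurewicz image is $[1\otimes s]$. Your write-up is in fact more careful than the paper's: the paper simply invokes non-vanishing of the target class $[1\otimes s]$ to conclude that $\theta(1\otimes s')$ survives, whereas you verify the permanent-cycle property directly by degree reasons and supply the parity argument $K(1)_1E_2=0$ to pass from $\E^\infty$ to the abutment, a point the paper leaves implicit.
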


\begin{proof}
From \eqref{Commutative square} and the discussion in \eqref{extension of psi}, we have a commutative diagram
\[\begin{tikzcd}[row sep=large, column sep=large]
\theta(1 \otimes s') \in \pi_{0}(E_2 \wedge E_2) \ar[r, dotted] \ar[d, "\Psi"] & THH_{1}(E_{2})\ar[d, "\Psi"]\\
\Psi(1 \otimes s') \in K(1)_{0}E_2 \otimes_{K(1)_{0}} K(1)_{0}E_2 \ar[r, dotted] & [1 \otimes s] \in K(1)_{1}THH(E_{2})
\end{tikzcd}\]
where, as before, the horizontal dotted arrows depict how surviving classes of a spectral sequence map from the first page of spectral sequence to the final one and to a class of the object the spectral sequence converges to. Since the class $$[1 \otimes s] \in K(1)_{1}THH(E_{2})$$ is non-zero, $1 \otimes s'$ survives the filtration spectral sequence representing a non-zero class in $THH_{1}(E_{2})$. We denote this class as $\tilde{s}$.
\end{proof}

\section{$THH(E_{2})$ and $THH(E_{2})_{p}^{\wedge}$}\label{Main results}

In this section we prove our main statements \cref{MainThm}, \cref{MainCorr}. This essentially amounts to constructing multiple cofiber sequences using the lifted classes from \cref{lift of unit}, \cref{lift of dt} and \cref{lift of Fp[[u1]]}. We do this in Subsection 5.1, Subsection 5.2 and Subsection 5.3, respectively.

To construct the cofiber sequences, we need the following remarks.

\begin{remark}\label{Properties of cofibers}
For $$A \rightarrow B \rightarrow C$$ a cofiber sequence of spectra with $A, B$ both $L_n$-local (\cref{Notations},\eqref{localization}), we have
\begin{enumerate}[leftmargin=*]
    \item The cofiber, $C$, is $L_{n}$-local.
    \item If $A \rightarrow B$ is a $K(n)$-equivalence, then $C$ is $L_{n-1}$-local.
\end{enumerate}
\end{remark}

\begin{remark}\label{M[u inverted]}
    In places in this section, we work in terms of $E_2[u_1^{-1}]$-modules: by \cite[\S VIII.4]{MR1417719}, we can invert the element $u_1$ in $\pi_0E_2$ to produce an $E_\infty$ ring spectrum $E_2[u_1^{-1}]$ with 
\[\pi_*(E_2[u_1^{-1}])\cong E_{2\ast}[u_1^{-1}].\] 
Likewise we can invert the action of $u_1$ on an $E_2$-module $M$ to produce an $E_2[u_1^{-1}]$-module $M[u_1^{-1}]$ with $$\pi_*(M[u_1^{-1}])\cong M_*[u_1^{-1}].$$ More generally, for any homology theory on $E_2$-modules, $(-)[u_1^{-1}]$ has the effect of inverting the action of $u_1$, and so in particular, we have
\[
K(i)_*(E_2[u_1^{-1}])\cong (K(i)_*E_2)[u_1^{-1}]\quad\text{and}\quad
K(i)_*(M[u_1^{-1}])\cong (K(i)_*M)[u_1^{-1}].
\]
Homotopically, the underlying $E_2$-module of $M[u_1^{-1}]$ is modeled as a homotopy colimit by multiplication by $u_1$:
\begin{equation}
    M[u_1^{-1}] \simeq \hocolim(M\xrightarrow{u_{1}}M\xrightarrow{u_{1}}M\xrightarrow{u_{1}}\cdots).
\end{equation}
Note that if $M$ is an $E_2$-module on which $u_1$ is already invertible, then as $E_2$-modules $$M\simeq M[u_1^{-1}];$$ in such a case (when working in the homotopy category of $E_2$-modules), we will regard $M$ as an $E_2[u_1^{-1}]$-module by abuse of notation.
\end{remark}

\subsection{The cofiber of unit map}
We denote the unit map from \eqref{unit map} as $f_1$ and consider the associated cofiber sequence 
\begin{equation}\label{f1,C_f1}
E_2 \xrightarrow{f_1} THH(E_2)\xrightarrow{C_{f_1}} \overline{THH}(E_{2}).
\end{equation}
Here we are writing $\overline{THH}(E_{2})$ for the cofiber of $f_1$, which we study in this subsection. We have two immediate observations from \cref{Properties of cofibers}, \cref{M[u inverted]}.

\begin{observation}\label{C_f1 is L_1}
    $E_2$ is $L_2$-local and from \cref{higher i}, $THH(E_{2})$ is $L_{2}$-local. Using \cref{Properties of cofibers}, we conclude $\overline{THH}(E_{2})$ is $L_{1}$-local.
\end{observation}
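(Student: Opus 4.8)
The plan is to recognize the assertion as a direct application of \cref{Properties of cofibers}(2) to the cofiber sequence \eqref{f1,C_f1}, so that the work reduces to checking the three hypotheses of that remark with $n=2$. Concretely, for the cofiber sequence
\[ E_2 \xrightarrow{f_1} THH(E_2) \xrightarrow{C_{f_1}} \overline{THH}(E_2), \]
I need that the source and target of $f_1$ are both $L_2$-local and that $f_1$ is a $K(2)$-equivalence.

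First I would record the two locality inputs. That $E_2$ is $L_2$-local is built into the definition of Morava $E$-theory (\cref{Notations}\eqref{localization}), and that $THH(E_2)$ is $L_2$-local is exactly the content of \cref{higher i}, which follows from the vanishing $K(i)_\ast THH(E_2)=0$ for $i>2$. Thus both the source and target of $f_1$ are $L_2$-local, which already places us in the setting of \cref{Properties of cofibers} with $n=2$ and, via part (1), gives that $\overline{THH}(E_2)$ is at least $L_2$-local.

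The key additional input, which upgrades the conclusion from $L_2$-locality to $L_1$-locality, is that $f_1$ is a $K(2)$-equivalence. This is precisely \cref{K(2) of THH(E2)}, where the unit map is shown to induce an isomorphism $K(2)_\ast E_2 \cong K(2)_\ast THH(E_2)$. Feeding this into \cref{Properties of cofibers}(2) with $A=E_2$, $B=THH(E_2)$, and $C=\overline{THH}(E_2)$ immediately yields that $\overline{THH}(E_2)$ is $L_1$-local, completing the argument.

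There is no real obstacle here, since all of the substantive work has been carried out in the earlier $K(i)$-homology computations. The only point requiring care is to invoke part (2) rather than part (1) of \cref{Properties of cofibers}: merely knowing that $E_2$ and $THH(E_2)$ are $L_2$-local would give only $L_2$-locality of the cofiber, and it is the $K(2)$-equivalence from \cref{K(2) of THH(E2)} that is responsible for dropping the chromatic level by one.
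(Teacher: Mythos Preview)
Your proposal is correct and matches the paper's approach exactly: the observation in the paper is simply a terse inline invocation of \cref{Properties of cofibers}, and you have correctly unpacked that the relevant part is (2), with the $K(2)$-equivalence of the unit map supplied by \cref{K(2) of THH(E2)}. Your version is more explicit than the paper's one-sentence justification, but the logic is identical.
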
  

\begin{observation}\label{u_1 inv module st}
The following isomorphism
\[K(1)_{\ast}E_{2} \otimes_{\F_{p}[[u_{1}]]} HH_{1}^{\F_{p}(u_{1})}\F_{p}((u_{1})) \cong K(1)_{\ast}E_{2}[u_{1}^{-1}] \otimes_{\F_{p}((u_{1}))} HH_{1}^{\F_{p}(u_{1})}\F_{p}((u_{1}))\]
implies that $\mathfrak{B}$ \eqref{notation B} is a basis of $$K(1)_{\ast}E_{2}[u_{1}^{-1}] \otimes_{{F}_{p}((u_{1}))} HH_{1}^{\F_{p}(u_{1})}\F_{p}((u_{1}))$$ as a $K(1)_{\ast}E_{2}[u_{1}^{-1}]$-module.
\end{observation}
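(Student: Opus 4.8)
The plan is to deduce both assertions from the single fact that $u_1$ acts invertibly on $M := HH_{1}^{\F_p(u_1)}\F_p((u_1))$, which turns the whole statement into a base-change computation. First I would record that $M$ is a vector space over $\F_p((u_1))$, so its $\F_p[[u_1]]$-module structure (via the inclusion $\F_p[[u_1]]\hookrightarrow\F_p((u_1))$) is one on which $u_1$ is invertible. In the tensor product $K(1)_\ast E_2 \otimes_{\F_p[[u_1]]} M$, the right-hand factor therefore forces $u_1$ to act invertibly on the left, and the standard base-change identity $N\otimes_A L \cong (N\otimes_A B)\otimes_B L$ for a $B$-module $L$ (associativity of $\otimes$ together with $B\otimes_B L\cong L$), applied with $A=\F_p[[u_1]]$, $B=\F_p((u_1))$, $N=K(1)_\ast E_2$, $L=M$, gives
\[
K(1)_\ast E_2 \otimes_{\F_p[[u_1]]} M \;\cong\; \bigl(K(1)_\ast E_2 \otimes_{\F_p[[u_1]]} \F_p((u_1))\bigr) \otimes_{\F_p((u_1))} M.
\]
Since $\F_p((u_1))=\F_p[[u_1]][u_1^{-1}]$ is the localization inverting $u_1$, the inner factor is $(K(1)_\ast E_2)[u_1^{-1}]$, which by \cref{M[u inverted]} equals $K(1)_\ast E_2[u_1^{-1}]$. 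This produces the displayed isomorphism.

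For the basis statement I would then use that tensoring a free module along a ring map preserves a basis. By its definition \eqref{notation B}, $\mathfrak{B}$ is an $\F_p((u_1))$-basis of $M$, so $M\cong\bigoplus_{(1\otimes s)\in\mathfrak{B}}\F_p((u_1))$ as an $\F_p((u_1))$-module. Applying $K(1)_\ast E_2[u_1^{-1}]\otimes_{\F_p((u_1))}(-)$ and commuting the tensor past the direct sum yields
\[
K(1)_\ast E_2[u_1^{-1}]\otimes_{\F_p((u_1))} M \;\cong\; \bigoplus_{(1\otimes s)\in\mathfrak{B}} K(1)_\ast E_2[u_1^{-1}],
\]
where the summand indexed by $(1\otimes s)$ is generated by $1\otimes(1\otimes s)$. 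Hence the image of $\mathfrak{B}$ is a $K(1)_\ast E_2[u_1^{-1}]$-basis of the right-hand side, which is exactly the claim.

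Every step here is a formal manipulation of tensor products, localizations, and direct sums, so there is no genuine obstacle. The one point that genuinely needs care is the bookkeeping of the several $u_1$-actions: one must check that the $u_1$ inverted inside $K(1)_\ast E_2$ (the Hurewicz image of $u_1\in\pi_0 E_2$) really agrees with the $u_1$ of $\F_p[[u_1]]$ acting on $M$, and that the homotopy-level localization $K(1)_\ast E_2[u_1^{-1}]$ coincides with the algebraic one $(K(1)_\ast E_2)[u_1^{-1}]$. The first compatibility is built into the module structures of \cref{Classes} (in particular \eqref{u classes 1}--\eqref{u classes 2}), and the second is precisely the content of \cref{M[u inverted]}.
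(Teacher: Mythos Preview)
Your argument is correct and is exactly the natural justification; the paper itself treats this as a bare observation with no proof, so there is nothing to compare against beyond noting that your base-change and free-module reasoning is precisely what the author is leaving implicit.
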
 

\begin{remark}\label{homological degree}
Note that since
\[K(i)_{\ast}THH(E_{2}) \cong HH_{\ast, \ast}^{K(i)_{\ast}}K(i)_{\ast}E_{2},\]
we can give $K(i)_{\ast}THH(E_{2})$ a grading using the extra homological grading on the right hand side.  We refer to this grading on $K(i)_{\ast}THH(E_{2})$ as the \textit{homological grading} and the degree of elements in this grading as their \textit{homological degree}.  In the case $i=0$ and $i=1$, \cref{K0 homology of THHE_2} and \cref{K(1) of THH(E2)} write the right hand side as an exterior algebra in certain classes of homological degree $1$ (that depend on $i$).  In terms of these classes, the homological grading is therefore a homogeneous grading in the intrinsic multiplication. In the case $i=2$, \cref{K(2) of THH(E2)} asserts that the homological grading is concentrated in degree zero.  
\end{remark}

We can now reorganize the computation of $K(i)_*THH(E_2)$ in terms of the homological grading of the previous remark.  The following theorems are just restatements of \cref{K(1) of THH(E2)} and \cref{K0 homology of THHE_2}. $\mathfrak{B}$ is as in \eqref{notation B}. The first result uses \cref{u_1 inv module st} to write the homological classes as internal direct sum, no similar analogue exists for the second result.

\begin{theorem}[\cref{K(1) of THH(E2)}]\label{K(1) classes}
In terms of the homological grading of \cref{homological degree}, the homological degree $n$ part of $K(1)_{\ast}THH(E_{2})$ is the $K(1)_{\ast}E_{2}$-module described as follows:
\indent
\begin{itemize}[leftmargin=*]
\item $K(1)_{\ast}E_{2}$ in homological degree $0$
\item The internal direct sum of the $K(1)_{\ast}E_{2}$-modules $K(1)_{\ast}E_{2}(dt_{1})$ and $K(1)_{\ast}E_{2}[u_{1}^{-1}](1 \otimes s)$, for all $(1 \otimes s) \in \mathfrak{B}$,  in homological degree 1
\item The internal direct sum of the $K(1)_{\ast}E_{2}$-modules $$K(1)_{\ast}E_{2}[u_{1}^{-1}](1 \otimes s_{1}) (1 \otimes s_{2}) \cdots (1 \otimes s_{n})$$ $($for all choices of $n$ distinct elements $(1 \otimes s_i)\in \mathfrak{B})$ and $$K(1)_{\ast}E_{2}[u_{1}^{-1}](1 \otimes s_{1}) (1 \otimes s_{2}) \cdots (1 \otimes s_{n-1}) dt_{1}$$ $($for all choices of $n-1$ distinct elements $(1 \otimes s_i)\in \mathfrak{B})$, in homological degree $n$, $n \geq 2$.
\end{itemize}
\end{theorem}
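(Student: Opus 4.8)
The plan is to derive this statement purely by unpacking the isomorphism of \cref{K(1) of THH(E2)} according to the homological grading introduced in \cref{homological degree}. Recall that theorem identifies $K(1)_{\ast}THH(E_{2})$ with
\[K(1)_{\ast}E_2 \otimes_{\F_{p}[[u_{1}]]} \Lambda_{\F_{p}[[u_{1}]]}HH_{1}^{\F_{p}[u_{1}]}\F_{p}[[u_{1}]] \otimes_{\F_{p}} \Lambda_{\F_{p}} dt_{1},\]
where the two exterior factors carry the homological grading, with $dt_1$ and each element of $HH_{1}^{\F_{p}[u_{1}]}\F_{p}[[u_{1}]]$ sitting in homological degree $1$. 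First I would split this tensor product by total homological degree, using that $\Lambda_{\F_{p}[[u_{1}]]}HH_{1}^{\F_{p}[u_{1}]}\F_{p}[[u_{1}]]$ decomposes as $\F_p[[u_1]]$ in degree zero together with its positive-degree part $\Lambda^{\geq 1}$, and that $\Lambda_{\F_p}dt_1 = \F_p \oplus \F_p\langle dt_1\rangle$ with $dt_1^2 = 0$. Because $dt_1$ squares to zero, at most one $dt_1$-factor can appear in any monomial, which is exactly the source of the two families appearing in each homological degree $n$.

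Next I would handle the $u_1$-inversion, which is the one genuinely non-formal step. By \eqref{power series vs field of fractions} the positive-degree part $\Lambda^{\geq 1}_{\F_{p}[[u_1]]}HH_{1}^{\F_{p}[u_{1}]}\F_{p}[[u_{1}]]$ agrees with $\Lambda^{\geq 1}_{\F_p((u_1))}HH_{1}^{\F_{p}(u_{1})}\F_{p}((u_{1}))$, an exterior algebra over the field $\F_p((u_1))$; since $u_1$ already acts invertibly there, tensoring it over $\F_p[[u_1]]$ with $K(1)_\ast E_2$ has the effect of inverting $u_1$ in the first factor. This is precisely the content of \cref{u_1 inv module st}, and it lets me rewrite each positive-degree contribution as a module over $K(1)_\ast E_2[u_1^{-1}]$.

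Finally I would read off the decomposition degree by degree using the basis $\mathfrak{B}$ of $HH_{1}^{\F_{p}(u_{1})}\F_{p}((u_{1}))$ from \eqref{notation B}. The exterior algebra $\Lambda^{\geq 1}_{\F_p((u_1))}HH_{1}^{\F_{p}(u_{1})}\F_{p}((u_{1}))$ has $\F_p((u_1))$-basis the squarefree monomials $(1\otimes s_1)\cdots(1\otimes s_k)$ in distinct elements of $\mathfrak{B}$, so after the $u_1$-inversion each such monomial contributes a free rank-one summand $K(1)_\ast E_2[u_1^{-1}](1\otimes s_1)\cdots(1\otimes s_k)$, assembled as an internal direct sum. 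Combining with the $\F_p\langle dt_1\rangle$ factor then gives: homological degree $0$ is $K(1)_\ast E_2$; homological degree $1$ is $K(1)_\ast E_2(dt_1)$ together with $K(1)_\ast E_2[u_1^{-1}](1\otimes s)$ over all $(1\otimes s)\in\mathfrak{B}$; and homological degree $n\geq 2$ is the monomials $(1\otimes s_1)\cdots(1\otimes s_n)$ in $n$ distinct basis elements together with the monomials $(1\otimes s_1)\cdots(1\otimes s_{n-1})dt_1$ in $n-1$ distinct basis elements, exactly as asserted. The only point requiring care is bookkeeping the interaction of the two exterior factors so that the two families in each degree are neither merged nor double-counted; the $u_1$-inversion identity of \cref{u_1 inv module st} is what legitimizes the internal direct sum description and is the crux of the argument.
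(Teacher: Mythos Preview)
Your proposal is correct and follows exactly the approach the paper indicates: the paper states that this theorem is simply a restatement of \cref{K(1) of THH(E2)} using \cref{u_1 inv module st} to express the positive-degree exterior part as an internal direct sum over the basis $\mathfrak{B}$, and your write-up spells out precisely that unpacking. The only content beyond bookkeeping is the $u_1$-inversion via \eqref{power series vs field of fractions} and \cref{u_1 inv module st}, which you have identified correctly as the crux.
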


\begin{theorem}[\cref{K0 homology of THHE_2}]\label{K(0) classes}
In terms of the homological grading of \cref{homological degree}, the homological degree $n$ part of $K(0)_{\ast}THH(E_{2})$ is the $K(0)_{\ast}E_{2}$-module described as follows:
\indent
\begin{itemize}[leftmargin=*]
\item $K(0)_{\ast}E_{2}$ in homological degree $0$,
\item The internal direct sum of $K(0)_{\ast}E_{2}(du)$ and $K(0)_{\ast}E_{2} \otimes_{\X}HH_{1}^{\Q}\X$ in homological degree $1$,
\item $K(0)_{\ast}E_2$ module generated by products of $n$ distinct degree $1$ classes in homological degree $n$.
\end{itemize}
\end{theorem}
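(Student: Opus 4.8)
The plan is to read off the claimed graded description directly from the algebra isomorphism already established in \cref{K0 homology of THHE_2}, reorganized according to the homological grading of \cref{homological degree}. Recall that, because the Bökstedt spectral sequence \eqref{BSS Equation} for $i=0$ collapses at $\E^2$ with no additive or multiplicative extensions, the filtration grading on $K(0)_{\ast}THH(E_2)$ agrees with the homological (exterior-algebra) grading on
\[
HH_{\ast}^{K(0)_{\ast}}(K(0)_{\ast}E_2) \cong K(0)_{\ast}E_2 \otimes_{\X} \Lambda_{\X}HH_{1}^{\Q}(\X) \otimes_{\Q} \Lambda_{\Q}du.
\]
Thus the isomorphism of \cref{K0 homology of THHE_2} is an isomorphism of \emph{homologically graded} $K(0)_{\ast}E_2$-modules, and it suffices to identify the degree-$n$ piece of the right-hand side for each $n$.

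First I would record that the multiplicative degree-$1$ generators of the exterior algebra $\Lambda_{\X}HH_{1}^{\Q}(\X) \otimes_{\Q} \Lambda_{\Q}du$ are exactly the single class $du$ (of homological degree $1$ and internal degree $2$) together with the elements of the module $HH_{1}^{\Q}(\X)$ (of homological degree $1$ and internal degree $0$), as recorded in \cref{HH of K(0)E2}. Reading off degree $0$ gives $K(0)_{\ast}E_2$; reading off degree $1$ gives the internal direct sum $K(0)_{\ast}E_2(du) \oplus \big(K(0)_{\ast}E_2 \otimes_{\X} HH_{1}^{\Q}(\X)\big)$, where $K(0)_{\ast}E_2(du)$ denotes the free rank-one $K(0)_{\ast}E_2$-module generated by $du$ and the second summand is the extension of scalars of $HH_{1}^{\Q}(\X)$ along $\X \to K(0)_{\ast}E_2$. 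For $n \geq 2$ the degree-$n$ piece is spanned over $K(0)_{\ast}E_2$ by products of $n$ distinct degree-$1$ generators, which is precisely the assertion of the theorem.

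I expect the only genuine subtlety, rather than a real obstacle, to be that, unlike the $K(1)$ case, there is no analogue of \cref{u_1 inv module st} available here, so one should \emph{not} attempt to split $HH_{1}^{\Q}(\X)$ further into a direct sum of cyclic $K(0)_{\ast}E_2$-modules indexed by a basis. The reason is that the base ring $\X = \Z_p[[u_1]] \otimes_{\Z}\Q$ is not a field and $u_1$ cannot be inverted to turn it into one, so $HH_{1}^{\Q}(\X)$ does not acquire a canonical basis the way $HH_{1}^{\F_p(u_1)}\F_p((u_1))$ does over $\F_p((u_1))$ in \cref{K(1) classes}. Accordingly the statement keeps $HH_{1}^{\Q}(\X)$ as an abstract extended module, and this is the sense in which the theorem is a genuine restatement rather than a refinement of \cref{K0 homology of THHE_2}: the content is simply the translation of the exterior-algebra grading into the homological grading, with no spectral-sequence or extension problems remaining to resolve.
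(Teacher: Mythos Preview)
Your proposal is correct and matches the paper's treatment exactly: the paper presents this theorem explicitly as ``just [a] restatement'' of \cref{K0 homology of THHE_2} with no separate proof, and even notes (as you do) that no analogue of \cref{u_1 inv module st} exists here, so the $K(0)$ statement cannot be refined to an internal direct sum over a basis the way \cref{K(1) classes} is. Your explanation of why the homological grading agrees with the exterior-algebra grading and why $HH_1^{\Q}(\X)$ must remain unsplit is precisely the content the paper leaves implicit.
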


This allows us to know the homological classes of $\overline{THH}(E_{2})$.
\begin{observation}\label{homological classes of C_f1}
    From \cref{C_f1 is L_1}, $\overline{THH}(E_{2})$ has only $K(0)$ and $K(1)$-homology classes. $f_{1}$ maps on to homological degree $0$ classes in $K(i)_{\ast}THH(E_2)$ for $i=0,1$ from \cref{K(1) classes}, \cref{K(0) classes}. The remaining classes from \cref{K(0) classes} and \cref{K(1) classes} correspond to respective $K(0)$ and $K(1)$-homology classes of $\overline{THH}(E_{2})$, respectively. 
\end{observation}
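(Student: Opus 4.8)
The plan is to reduce the entire statement to the splitting \eqref{splitting of unit map}, $THH(E_2)\simeq E_2\vee\overline{THH}(E_2)$, combined with the $K(i)$-homology computations of \cref{K(i) of THHE2}. Because this splitting arises from a genuine section--retraction pair (the unit $f_1$ together with the multiplication map of \cref{section unit map}), applying any homology theory yields an honest direct sum; in particular, for every $i$,
\[
K(i)_\ast THH(E_2)\cong K(i)_\ast E_2\oplus K(i)_\ast\overline{THH}(E_2),
\]
where the first summand is the image of $f_1$ and the second is the image of $C_{f_1}$.

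First I would dispose of the vanishing claim. For $i>2$, \cref{higher i} gives $K(i)_\ast THH(E_2)=0$, and since $K(i)_\ast E_2=0$ as well, the decomposition forces $K(i)_\ast\overline{THH}(E_2)=0$. For $i=2$, \cref{K(2) of THH(E2)} asserts that $f_1$ is a $K(2)$-equivalence, so the $K(2)_\ast E_2$ summand already accounts for all of $K(2)_\ast THH(E_2)$, and again $K(2)_\ast\overline{THH}(E_2)=0$. This is consistent with \cref{C_f1 is L_1}, which tells us independently that $\overline{THH}(E_2)$ is $L_1$-local. Hence only $K(0)$ and $K(1)$ can be nonzero on $\overline{THH}(E_2)$.

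Next, for $i=0,1$ I would match the summand decomposition against the homological grading of \cref{homological degree}. Under the isomorphism $K(i)_\ast THH(E_2)\cong HH^{K(i)_\ast}_{\ast,\ast}K(i)_\ast E_2$ furnished by the collapsed Bökstedt spectral sequence, the coefficient ring $K(i)_\ast E_2$ is precisely the homological degree $0$ summand. Since $f_1$ is the inclusion of the $E_2$ wedge summand, its image is exactly this homological degree $0$ part (cf.\ \cref{lift of unit}), and therefore $K(i)_\ast\overline{THH}(E_2)$ is identified with the direct sum of all classes of homological degree $\geq 1$ listed in \cref{K(0) classes} and \cref{K(1) classes}.

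The step I expect to require the most care, rather than being pure bookkeeping, is verifying that the wedge splitting is compatible with the homological (Hochschild) grading: that the multiplicative retraction $THH(E_2)\to E_2$ projects onto homological degree $0$ and annihilates everything in positive homological degree. This holds because the retraction is induced by the multiplication maps $E_2^{\wedge (n+1)}\to E_2$, which on the cyclic bar construction collapse the simplicial direction; on Hochschild homology this is the canonical augmentation $HH_\ast^{K(i)_\ast}K(i)_\ast E_2\to HH_0=K(i)_\ast E_2$, i.e.\ the projection onto homological degree $0$. Granting this compatibility, the identification of $K(i)_\ast\overline{THH}(E_2)$ with the positive homological degree classes is immediate, and the observation follows.
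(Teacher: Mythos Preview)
Your proposal is correct and follows essentially the same approach as the paper: the paper's observation is terse and simply cites \cref{C_f1 is L_1} for the vanishing at heights $\geq 2$ and \cref{K(0) classes}, \cref{K(1) classes} for the identification of $K(i)_\ast f_1$ with the homological degree $0$ part, leaving the reader to unpack the splitting \eqref{splitting of unit map} exactly as you do. Your explicit verification that the multiplicative retraction $THH(E_2)\to E_2$ projects onto homological degree $0$ (via the augmentation of the cyclic bar complex) is a detail the paper leaves implicit, so if anything your argument is more complete than the paper's own justification.
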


\subsection{Second cofiber sequence}
Next we use the analysis of $\overline{THH}(E_{2})$ above to construct a second cofiber sequence of $E_2$-modules
\begin{equation*}\label{eq:second cofiber sequence}
    X_2 \xrightarrow[]{f_2} \overline{THH}(E_{2}) \xrightarrow[]{C_{f_2}} \bbTHH(E_{2})
\end{equation*}
where $C_{f_2}$ is the cofiber map and $\bbTHH(E_2)$ denotes the cofiber. Then, we analyze $\bbTHH(E_2)$.

From \cref{lift of dt}, we have 
\begin{equation*}
  \lambda_{1}: \bbS^{2p-1} \to THH(E_{2}).  
\end{equation*}
Smashing with $E_{2}$, using the $E_{2}$-module structure of $THH(E_{2})$, and composing with $C_{f_1}$, we get $E_2$-module map
\[ j_{1} : \Sigma^{2p-1}E_{2} \simeq E_{2} \wedge \bbS^{2p-1} \rightarrow E_{2} \wedge THH(E_{2}) \rightarrow THH(E_{2}) \rightarrow \overline{THH}(E_{2}).\]
Since $\overline{THH}(E_{2})$ is $L_{1}$-local, $j_{1}$ factors through the following map of $E_2$-modules
$$\overline{j_{1}}: \Sigma^{2p-1}L_{1}E_{2} \rightarrow \overline{THH}(E_{2}).$$
By construction of $j_{1}$, the induced map at $K(1)_{\ast}$ level sends the generator of $K(1)_{\ast}(\Sigma^{2p-1}E_{2})$ to the class 
\begin{equation}\label{dt_1 of C_f1}
    dt_{1} \in K(1)_{\ast}\overline{THH}(E_{2})
\end{equation}
and hence the same is true for the map $\overline{j_{1}}$. Let
\begin{equation}\label{Define X_2 and f_2}
    X_{2}:= \Sigma^{2p-1}L_{1}E_{2}, \ f_{2}:= \overline{j_{1}}: \Sigma^{2p-1}L_{1}E_{2} \rightarrow \overline{THH}(E_{2}).
\end{equation}

\begin{observation}\label{homological classes of C_f2}
    Similar to \cref{homological classes of C_f1}, from \eqref{Define X_2 and f_2} we conclude that $\bbTHH(E_2)$ is $L_1$-local and its $K(1)$-homology classes are the ones generated by all classes of $K(1)_{\ast}\bTHH(E_2)$ except \eqref{dt_1 of C_f1}. From \cref{u_1 inv module st} and \cref{K(1) classes}, $K(1)_{\ast}\bbTHH(E_2)$ is now a $K(1)_{\ast}E_{2}[u_{1}^{-1}]$-module. 
\end{observation}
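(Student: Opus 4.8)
The plan is to handle the cofiber sequence $X_2\xrightarrow{f_2}\bTHH(E_2)\xrightarrow{C_{f_2}}\bbTHH(E_2)$ of \eqref{Define X_2 and f_2} exactly as \cref{homological classes of C_f1} handled the first one. First I would settle $L_1$-locality: the source $X_2=\Sigma^{2p-1}L_1E_2$ is $L_1$-local, being a suspension of an $L_1$-localization, and $\bTHH(E_2)$ is $L_1$-local by \cref{C_f1 is L_1}. Applying \cref{Properties of cofibers}(1) to this sequence then forces the cofiber $\bbTHH(E_2)$ to be $L_1$-local, so that its only possibly nonzero Morava homologies sit in chromatic degrees $0$ and $1$.

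Next I would compute $K(1)_*\bbTHH(E_2)$ from the long exact sequence in $K(1)$-homology, for which the sole point is to understand $(f_2)_*$. The localization map $E_2\to L_1E_2$ is an $E(1)$-equivalence and hence a $K(1)$-equivalence, so $K(1)_*X_2\cong\Sigma^{2p-1}K(1)_*E_2$ is free of rank one over $K(1)_*E_2$ on a generator in total degree $2p-1$. By \eqref{dt_1 of C_f1} the $E_2$-module map $f_2$ carries this generator to $dt_1$, and by \cref{K(1) classes} the standalone class $dt_1$ spans a free rank-one $K(1)_*E_2$-summand in homological degree $1$. Thus $(f_2)_*$ is a $K(1)_*E_2$-linear isomorphism onto that summand, in particular injective in every degree; the connecting homomorphisms then vanish and the long exact sequence collapses to
\[
0\to K(1)_*X_2\xrightarrow{(f_2)_*}K(1)_*\bTHH(E_2)\to K(1)_*\bbTHH(E_2)\to 0 .
\]
Reading off the cokernel via \cref{K(1) classes} identifies $K(1)_*\bbTHH(E_2)$ with $K(1)_*\bTHH(E_2)$ after deleting the single homological degree $1$ summand carrying $dt_1$, which is exactly the asserted description in terms of the homological degree $1$ classes other than $dt_1$.

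For the module structure I would invoke \cref{u_1 inv module st} together with \cref{K(1) classes}: apart from the one summand $K(1)_*E_2(dt_1)$, every summand of $K(1)_*\bTHH(E_2)$ already has the form $K(1)_*E_2[u_1^{-1}](\cdots)$, on which $u_1$ acts invertibly. Since the cofiber removes precisely this single summand on which $u_1$ is not inverted, the cokernel $K(1)_*\bbTHH(E_2)$ is $u_1$-local, i.e.\ a $K(1)_*E_2[u_1^{-1}]$-module.

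The crux is the second paragraph: checking that $(f_2)_*$ is injective with image exactly the $dt_1$-summand. This rests on two things that must be pinned down — that $X_2$ contributes only a free rank-one $K(1)_*E_2$-module, which is where the $K(1)$-equivalence $E_2\to L_1E_2$ is used, and that $dt_1$ is a non-torsion free generator, so that $(f_2)_*$ has trivial kernel. A subtlety worth recording is that the resulting quotient still retains the products $\alpha\,dt_1$ for $\alpha$ a nonempty product of the $(1\otimes s)\in\mathfrak B$; these persist because each is already $u_1$-inverted, and it is only the lone degree-one class $dt_1$ that is removed.
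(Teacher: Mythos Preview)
Your proposal is correct and is essentially the detailed unpacking of what the paper leaves implicit: the paper states this as an observation with no separate proof, deferring to the analogy with \cref{homological classes of C_f1} and the references to \cref{u_1 inv module st} and \cref{K(1) classes}, and your argument via \cref{Properties of cofibers}(1) for $L_1$-locality together with the long exact sequence in $K(1)$-homology is exactly the intended justification.
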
 

\subsection{Third cofiber sequence}
There is a natural map
\begin{equation}\label{inverting u}
    \bbTHH(E_2) \rightarrow \bbTHH(E_2)[u_{1}^{-1}].
\end{equation}
From \cref{M[u inverted]}, \cref{homological classes of C_f2}
\begin{equation}\label{K(1)equiv}
    K(1)_{\ast}(\bbTHH(E_2)[u^{-1}_{1}]) \cong (K(1)_{\ast}\bbTHH(E_2))[u_{1}^{-1}] \cong K(1)_{\ast}\bbTHH(E_2).
\end{equation}
From $K(1)$-equivalence and $L_1$-locality of 
\begin{equation*}
  \bbTHH(E_2)[u_{1}^{-1}]  \text{ and } \bbTHH(E_2)
\end{equation*}
we conclude, the cofiber of \eqref{inverting u} is rational using \cref{Properties of cofibers}. Let $C_{\infty}$ denote this cofiber.

Consider $\lambda_{1}$ and $\tilde{s}$ from \cref{lift of dt} and \cref{lift of Fp[[u1]]}. Let $\tilde{\mathfrak{B}}$ denote the chosen lifts $\tilde{s}$ of elements $(1 \otimes s) \in \mathfrak{B}$ from \cref{lift of Fp[[u1]]}. We denote by
\begin{equation}\label{q_alpha}
 q_{\alpha}: \bbS^{|\alpha|} \rightarrow THH(E_{2}),   
\end{equation}
the elements of the form $$\Pi_{\alpha}\tilde{s} \in \pi_{|\alpha|}THH(E_{2}),$$ where $\tilde{s}$ are all distinct elements of set $\alpha$, $\alpha$ ranges over all non-empty finite subsets of $\tilde{\mathfrak{B}}$ and $|\alpha|$ denotes the cardinality of these subsets. Similarly, let
\begin{equation}\label{q_alpha,lambda}
 q_{\alpha, \lambda_{1}}: \bbS^{|\alpha|+2p-1} \rightarrow THH(E_{2})   
\end{equation}
denote elements 
$$\Pi_{\alpha}\tilde{s}\cdot\lambda_{1} \in \pi_{|\alpha|+2p-1}THH(E_{2})$$
for all $\alpha$ as above.

Composing \eqref{q_alpha}, \eqref{q_alpha,lambda} with $C_{f_{1}}$, $C_{f_{2}}$ and \eqref{inverting u} we get maps from suspensions of sphere spectrum to $\bbTHH(E_2)[u_{1}^{-1}]$ which is an $E_{2}[u_{1}^{-1}]$-module. Smashing these maps with $E_{2}[u_{1}^{-1}]$ and using the $E_{2}[u_{1}^{-1}]$-module structure of $\bbTHH(E_2)[u_{1}^{-1}]$, we get $E_2[u_{1}^{-1}]$-module maps
\begin{equation}
    \begin{aligned}
       &\overline{q_{\alpha}} : \Sigma^{|\alpha|}E_{2}[u_{1}^{-1}] \simeq E_{2}[u_{1}^{-1}] \wedge \bbS^{|\alpha|} \rightarrow \bbTHH(E_2)[u_{1}^{-1}] \\
       &\overline{q_{\alpha, \lambda_{1}}} : \Sigma^{|\alpha|+2p-1}E_{2}[u_{1}^{-1}] \simeq E_{2}[u_{1}^{-1}] \wedge \bbS^{|\alpha|+2p-1} \rightarrow \bbTHH(E_2)[u_{1}^{-1}].
    \end{aligned}
\end{equation}
If we repeat the above process without composing with \eqref{inverting u} and using the $E_{2}$-module structure of $\bbTHH(E_2)$, we have $E_2$-module maps
\begin{equation}
    \begin{aligned}
       &\widetilde{q_{\alpha}} : \Sigma^{|\alpha|}E_{2} \simeq E_{2} \wedge \bbS^{|\alpha|} \rightarrow E_{2} \wedge \bbTHH(E_2) \rightarrow \bbTHH(E_2)\\ 
       &\widetilde{q_{\alpha, \lambda_{1}}} : \Sigma^{|\alpha|+2p-1}E_{2} \simeq E_{2} \wedge \bbS^{|\alpha|+2p-1} \rightarrow E_{2} \wedge \bbTHH(E_2) \rightarrow \bbTHH(E_2).
    \end{aligned}
\end{equation}
Note $\overline{q_{\alpha}}$, $\overline{q_{\alpha, \lambda_{1}}}$ are $u_{1}$ localizations of $\widetilde{q_{\alpha}}$ and $\widetilde{q_{\alpha, \lambda_{1}}}$, i.e., $\overline{q_{\alpha}} = \widetilde{q_{\alpha}}[u_{1}^{-1}]$ and $\overline{q_{\alpha, \lambda_{1}}} = \widetilde{q_{\alpha, \lambda_{1}}}[u_{1}^{-1}]$. From $L_{1}$-locality of $\bbTHH(E_2)$ and $\bbTHH(E_2)[u_{1}^{-1}]$ these maps factor through the following $E_2$-module and $E_2[u_1^{-1}]$-module maps, respectively:
\begin{equation*}
    \begin{aligned}
        \widetilde{q_{\alpha}}': \Sigma^{|\alpha|}L_{1}E_{2} \rightarrow \bbTHH(E_2), & \ \widetilde{q_{\alpha, \lambda_{1}}}': \Sigma^{|\alpha|+2p-1}L_{1}E_{2} \rightarrow \bbTHH(E_2)\\
        \overline{q_{\alpha}}': \Sigma^{|\alpha|}L_{1}E_{2}[u_{1}^{-1}] \rightarrow \bbTHH(E_2)[u_{1}^{-1}], & \ \overline{q_{\alpha, \lambda_{1}}}': \Sigma^{|\alpha|+2p-1}L_{1}E_{2}[u_{1}^{-1}] \rightarrow \bbTHH(E_2)[u_{1}^{-1}].
    \end{aligned}
\end{equation*}
We are ready to define the third cofiber sequence.
\begin{construction}\label{Q,X_3,f_3}
  Let $\widetilde{Q}$ be the map of $E_2$-modules
\[\bigvee_{\alpha} (\widetilde{q_{\alpha}}' \bigvee \widetilde{q_{\alpha, \lambda_{1}}}'): \bigvee_{\alpha} (\Sigma^{|\alpha|}L_{1}E_{2} \bigvee \Sigma^{|\alpha|+2p-1}L_{1}E_{2}) \rightarrow \bbTHH(E_2)\]
and $\overline{Q}$ be the map of $E_2[u_1^{-1}]$-modules
\[\overline{Q}: \bigvee_{\alpha} (\overline{q_{\alpha}}' \bigvee \overline{q_{\alpha, \lambda_{1}}}'): \bigvee_{\alpha} (\Sigma^{|\alpha|}L_{1}E_{2}[u_{1}^{-1}] \bigvee \Sigma^{|\alpha|+2p-1}L_{1}E_{2}[u_{1}^{-1}]) \rightarrow \bbTHH(E_2)[u_{1}^{-1}].\]
Further, define $$X_{3} := \bigvee_{\alpha} (\Sigma^{|\alpha|}L_{1}E_{2} \bigvee \Sigma^{|\alpha|+2p-1}L_{1}E_{2}).$$
Note that this makes
\[\bigvee_{\alpha} (\Sigma^{|\alpha|}L_{1}E_{2}[u_{1}^{-1}] \bigvee \Sigma^{|\alpha|+2p-1}L_{1}E_{2}[u_{1}^{-1}]) = X_{3}[u_{1}^{-1}].\] Then our third cofiber sequence is given by
\begin{equation}\label{define X_3 and f_3}
    X_3 \xrightarrow[]{f_3:= \widetilde{Q}} \bbTHH(E_2).
\end{equation}
\end{construction}
We have the following comparision of $\widetilde{Q}$ and $\overline{Q}$:

\begin{theorem} \label{rational cofiber}
We have a commutative diagram as follows
\begin{equation}\label{eq:rational cofiber}
\begin{tikzcd}
X_{3} \arrow{d}{\widetilde{Q}} \arrow{r}
& X_{3}[u_{1}^{-1}] \arrow{d}{\overline{Q}}
& \\
\bbTHH(E_2) \arrow{r} & \bbTHH(E_2)[u_{1}^{-1}] \arrow{d} \arrow{r} & C_{\infty}
& \\
                                   & C_{\overline{Q}}
\end{tikzcd}
\end{equation}
where $C_{\overline{Q}}$ denotes the cofiber of $\overline{Q}$. Moreover, the map $\overline{Q}$ is a $K(1)_{\ast}$-isomorphism and both $C_{\infty}$ and $C_{\overline{Q}}$ are rational.
\end{theorem}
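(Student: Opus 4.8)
The plan is to prove the three assertions in turn: commutativity of the top square is formal, the $K(1)_*$-isomorphism is the main computation, and the two rationality statements then follow from \cref{Properties of cofibers}. For commutativity, I would use that by \cref{Q,X_3,f_3} the map $\overline{Q}$ is by definition the $u_1$-localization $\widetilde{Q}[u_1^{-1}]$, while the two horizontal maps are the canonical localization maps of \cref{M[u inverted]}; naturality of the localization unit $\mathrm{id}\to(-)[u_1^{-1}]$ at $\widetilde{Q}$ then gives exactly the commutativity of the square. The remaining arrows are the defining cofiber maps of $C_\infty$ (the cofiber of \eqref{inverting u}) and of $C_{\overline{Q}}$, so nothing further is required there.

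The heart of the proof is that $\overline{Q}$ is a $K(1)_*$-isomorphism. By \cref{M[u inverted]} both sides compute $K(1)$-homology by inverting $u_1$, and by \eqref{K(1)equiv} the target satisfies $K(1)_*(\bbTHH(E_2)[u_1^{-1}])\cong K(1)_*\bbTHH(E_2)$, so it suffices to match free bases over $K(1)_*E_2[u_1^{-1}]$. I would first pin down $K(1)_*\bbTHH(E_2)$ from the two preceding cofiber sequences: the splitting \eqref{splitting of unit map} identifies $K(1)_*\bTHH(E_2)$ with the positive homological-degree part of $K(1)_*THH(E_2)$ from \cref{K(1) classes}, and the long exact sequence of $f_2$---which by \eqref{dt_1 of C_f1} maps isomorphically onto the free $K(1)_*E_2$-summand on $dt_1$ in homological degree one---identifies $K(1)_*\bbTHH(E_2)$ with the quotient deleting precisely that lone $dt_1$. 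Using \cref{u_1 inv module st}, this is a free $K(1)_*E_2[u_1^{-1}]$-module on the monomials $(1\otimes s_1)\cdots(1\otimes s_n)$ and $(1\otimes s_1)\cdots(1\otimes s_{n-1})\,dt_1$ indexed by nonempty finite subsets of $\mathfrak{B}$. Multiplicativity of the Hurewicz map together with \cref{lift of dt} and \cref{lift of Fp[[u1]]} then shows that $\overline{q_\alpha}'$ carries the free generator of $\Sigma^{|\alpha|}L_1E_2[u_1^{-1}]$ to $\prod_{s\in\alpha}(1\otimes s)$ and $\overline{q_{\alpha,\lambda_1}}'$ carries that of $\Sigma^{|\alpha|+2p-1}L_1E_2[u_1^{-1}]$ to $\bigl(\prod_{s\in\alpha}(1\otimes s)\bigr)dt_1$; these are exactly the free basis just described, matched bijectively, so $\overline{Q}_*$ is an isomorphism.

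Both rationality claims then follow from \cref{Properties of cofibers}(2). For $C_\infty$ this is already recorded just above the statement: \eqref{K(1)equiv} makes \eqref{inverting u} a $K(1)$-equivalence between the $L_1$-local spectra $\bbTHH(E_2)$ and $\bbTHH(E_2)[u_1^{-1}]$, so its cofiber is $L_0$-local, i.e.\ rational. For $C_{\overline{Q}}$ the $K(1)_*$-isomorphism just proved is in particular a $K(1)$-equivalence, and it only remains to check that $X_3[u_1^{-1}]$ and $\bbTHH(E_2)[u_1^{-1}]$ are $L_1$-local; since $L_1$ is smashing, $L_1$-local spectra are closed under wedges and under the sequential homotopy colimits defining $(-)[u_1^{-1}]$, and both spectra are assembled from the $L_1$-local pieces $L_1E_2$ and $\bbTHH(E_2)$ in exactly this way, so \cref{Properties of cofibers}(2) applies and $C_{\overline{Q}}$ is rational. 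I expect the main obstacle to be the cokernel computation in the second paragraph: one must verify that deleting only the single $dt_1$ class leaves precisely the free $K(1)_*E_2[u_1^{-1}]$-basis hit by the generators $\overline{q_\alpha}'$ and $\overline{q_{\alpha,\lambda_1}}'$, with no missing or extra monomials; once this bijection is established, the rest is either formal or a direct application of \cref{Properties of cofibers}.
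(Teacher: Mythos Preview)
Your proposal is correct and follows essentially the same route as the paper's proof: commutativity by construction of $\overline{Q}$ as $\widetilde{Q}[u_1^{-1}]$, the $K(1)_*$-isomorphism by matching the free $K(1)_*E_2[u_1^{-1}]$-basis of $K(1)_*\bbTHH(E_2)[u_1^{-1}]$ coming from \cref{K(1) classes} and \cref{homological classes of C_f2} against the images of the generators under $\overline{q_\alpha}'$ and $\overline{q_{\alpha,\lambda_1}}'$, and then rationality of $C_\infty$ and $C_{\overline{Q}}$ via \cref{Properties of cofibers}. Your argument is in fact slightly more explicit than the paper's in justifying $L_1$-locality of $X_3[u_1^{-1}]$ and $\bbTHH(E_2)[u_1^{-1}]$ (via $L_1$ being smashing) and in tracking the long exact sequence for $f_2$, but these are elaborations of the same proof rather than a different approach.
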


\begin{proof}
The commutativity of the diagram and rationality of $C_{\infty}$ follows from their construction in this section. All the terms in the square part of the diagram are $L_1$-local and hence rationality of $C_{\overline{Q}}$ follows from \cref{Properties of cofibers} once we show $\overline{Q}$ induces a $K(1)_{\ast}$-isomorphism.

By construction, $q_{\alpha}$ and $q_{\alpha, \lambda_{1}}$ are the lifts of $K(1)$-homology classes of $THH(E_2)$ to $\pi_{\ast}THH(E_{2})$. Using \cref{homological classes of C_f2} and \eqref{K(1)equiv}, they can be extended to lifts of $K(1)$-homology classes of $\bbTHH(E_2)[u_1^{-1}]$ to $\pi_{\ast}\bbTHH(E_{2})[u_1^{-1}]$. Thus after taking $K(1)_{\ast}$ on each side of $\overline{Q}$ we have a wedge of 
\begin{align*}
& \Sigma^{|\alpha|}K(1)_{\ast}(L_{1}E_{2}[u_{1}^{-1}]) \cong K(1)_{\ast}(L_{1}E_{2}[u_{1}^{-1}]) \wedge \bbS^{|\alpha|} \cong K(1)_{\ast}E_{2}[u_{1}^{-1}] \wedge \bbS^{|\alpha|} \text{ and }\\
& \Sigma^{|\alpha|+2p-1}K(1)_{\ast}(L_{1}E_{2}[u_{1}^{-1}]) \cong K(1)_{\ast}(L_{1}E_{2}[u_{1}^{-1}]) \wedge \bbS^{|\alpha|+2p-1} \cong K(1)_{\ast}E_{2}[u_{1}^{-1}] \wedge \bbS^{|\alpha|+2p-1} 
\end{align*}
on the source side and under $K(1)_{\ast}\overline{Q}$ they map to the wedge of
$$K(1)_{\ast}E_{2}[u_{1}^{-1}]\Pi_{\alpha}(1 \otimes s) \text{ and } K(1)_{\ast}E_{2}[u_{1}^{-1}]\Pi_{\alpha}(1 \otimes s) dt_{1}$$
which is the target, giving us the desired equivalence.
\end{proof}

\subsection{Proofs of the main results} 
From \eqref{f1,C_f1}, \eqref{Define X_2 and f_2}, \ref{Q,X_3,f_3} and \cref{rational cofiber} we have the following result.
\begin{theorem}\label{MainThm}
There is a diagram of $E_2$-modules
\begin{equation}\label{THH(E2)}
\begin{tikzcd}
E_{2} \arrow{r}{f_{1}} & THH(E_{2}) \arrow{d}{C_{f_{1}}}\\ 
X_{2} \arrow{r}{f_{2}} & \overline{THH}(E_{2}) \arrow{d}{C_{f_{2}}}\\
X_{3} \arrow{r}{f_{3}} & \bbTHH(E_2)
\end{tikzcd}
\end{equation}
such that
\begin{align*}
    E_2 \xrightarrow[]{f_1} & THH(E_2) \xrightarrow[]{C_{f_1}} \bTHH(E_2)\\
    X_{2} \xrightarrow{f_2} & \overline{THH}(E_{2}) \xrightarrow{C_{f_{2}}} \bbTHH(E_2)
\end{align*}
are cofiber sequences, and the map induced by $f_{3}$ $$X_{3}[u_{1}^{-1}] \rightarrow \bbTHH(E_2)[u_{1}^{-1}]$$ is a $K(1)_{\ast}$-isomorphism with a rational cofiber. Further, $X_{2}$ and $X_{3}$ are explicitly identifiable in terms of suspensions and localizations of $E_{2}$, and $X_{2}$, $X_{3}$, $\bTHH(E_2)$ and $\bbTHH(E_2)$ are all $L_{1}$-local. 
\end{theorem}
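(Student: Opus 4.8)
The plan is to assemble the statement from the constructions and results established earlier in this section; the substantive work is already done, so what remains is to record how the pieces fit into the diagram \eqref{THH(E2)} and to verify $L_1$-locality of the four auxiliary spectra.

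First I would identify the two cofiber sequences. The top row
\[
E_2 \xrightarrow{f_1} THH(E_2) \xrightarrow{C_{f_1}} \bTHH(E_2)
\]
is precisely the defining cofiber sequence \eqref{f1,C_f1} of $\bTHH(E_2)$. The middle row
\[
X_2 \xrightarrow{f_2} \bTHH(E_2) \xrightarrow{C_{f_2}} \bbTHH(E_2)
\]
is the cofiber sequence built earlier in this section, with $X_2=\Sigma^{2p-1}L_1E_2$ and $f_2=\overline{j_1}$ as in \eqref{Define X_2 and f_2}, and with $\bbTHH(E_2)$ defined as the cofiber of $f_2$. These two sequences, together with the cofiber maps $C_{f_1}$ and $C_{f_2}$ as the vertical arrows, are exactly the top two rows of \eqref{THH(E2)}, and the explicit descriptions of $X_2$ and $X_3=\bigvee_\alpha(\Sigma^{|\alpha|}L_1E_2\vee\Sigma^{|\alpha|+2p-1}L_1E_2)$ in terms of suspensions and localizations of $E_2$ are read off directly from \eqref{Define X_2 and f_2} and \cref{Q,X_3,f_3}.

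Next I would dispatch the claim about $f_3$. Since $C_{f_2}=\bbTHH(E_2)$ and $f_3=\widetilde{Q}$, the induced map on $u_1$-localizations is $f_3[u_1^{-1}]=\overline{Q}\colon X_3[u_1^{-1}]\to\bbTHH(E_2)[u_1^{-1}]=C_{f_2}[u_1^{-1}]$, and \cref{rational cofiber} shows precisely that $\overline{Q}$ is a $K(1)_\ast$-isomorphism whose cofiber $C_{\overline{Q}}$ is rational. This is the required assertion verbatim, so no further computation is needed.

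Finally I would verify $L_1$-locality. For the cofibers, $\bTHH(E_2)$ is $L_1$-local by \cref{C_f1 is L_1} and $\bbTHH(E_2)$ by \cref{homological classes of C_f2}. Since $L_1E_2$ is $L_1$-local and suspension preserves $L_1$-locality, $X_2$ is $L_1$-local. The one point that requires genuine care is $X_3$: it is an \emph{infinite} wedge of suspensions of $L_1E_2$, and $L_1$-locality is a property not detected by the homology groups, while arbitrary coproducts of local spectra need not be local for a non-smashing localization. The clean resolution is that $L_1$ (Bousfield localization at $E(1)$) is smashing by the Hopkins--Ravenel smashing theorem, so that the $L_1$-local spectra are closed under arbitrary coproducts and hence $X_3$ is $L_1$-local; equivalently, $X_3$ is a coproduct of $L_1E_2$-modules and every $L_1E_2$-module is $L_1$-local. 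This smashing input is the main (and essentially only) obstacle in what is otherwise a purely bookkeeping argument collecting the results of \cref{C_f1 is L_1}, \cref{homological classes of C_f2}, \cref{Q,X_3,f_3}, and \cref{rational cofiber}.
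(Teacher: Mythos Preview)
Your proposal is correct and matches the paper's approach exactly: the paper simply states that the theorem follows from \eqref{f1,C_f1}, \eqref{Define X_2 and f_2}, \cref{Q,X_3,f_3}, and \cref{rational cofiber}, without further argument. Your write-up is in fact more careful than the paper on the $L_1$-locality of the infinite wedge $X_3$, where you correctly invoke the smashing of $L_1$ (or the equivalent module-theoretic statement); the paper leaves this point implicit.
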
 
 
Upon $p$-completion, the rational cofiber vanishes. We show for $p$-complete $THH$ of $E_{2}$ spectrum there is a complete description in terms of cofiber sequences made of terms that are suspensions and localizations of $E_{2}$ (up to $p$-completion):

\begin{theorem}\label{MainCorr}
We have the following diagram of $(E_2)^{\wedge}_p$-modules for $THH(E_{2})_{p}^{\wedge}$, where $(C_{f_{i}})_{p}^{\wedge}$ are the cofiber maps of $(f_{i})_{p}^{\wedge}$
\[\begin{tikzcd}
(E_{2})_{p}^{\wedge} \arrow{r}{(f_{1})_{p}^{\wedge}:= \text{p-completed unit map}} & THH(E_{2})_{p}^{\wedge} \arrow{d}{(C_{f_{1}})_{p}^{\wedge}}\\
\Sigma^{2p-1}L_{1}(E_{2})_{p}^{\wedge} \arrow{r}{(f_{2})_{p}^{\wedge}:=(\overline{\jmath_{1}})_{p}^{\wedge}} & \overline{THH}(E_{2})_{p}^{\wedge} \arrow{d}{(C_{f_{2}})_{p}^{\wedge}}\\
(\bigvee_{\alpha} (\Sigma^{|\alpha|}L_{1}E_{2}[u_{1}^{-1}] \bigvee \Sigma^{|\alpha|+2p-1}L_{1}E_{2}[u_{1}^{-1}]))_{p}^{\wedge}\arrow[r, phantom, sloped, "\simeq"]&\bbTHH(E_2)_{p}^{\wedge}
\end{tikzcd}
\]
\end{theorem}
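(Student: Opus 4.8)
The plan is to deduce \cref{MainCorr} from \cref{MainThm} and \cref{rational cofiber} by applying $p$-completion, using that $p$-completion is exact and annihilates rational spectra. I would first record the two facts needed: $p$-completion carries cofiber sequences of spectra to cofiber sequences, and for any rational spectrum $R$ one has $R^{\wedge}_p \simeq 0$ (since $\pi_\ast R$ is a $\Q$-vector space, its derived $p$-completion vanishes).

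\emph{Top two rows.} Applying $(-)^{\wedge}_p$ to the two cofiber sequences
\[
E_2 \xrightarrow{f_1} THH(E_2) \xrightarrow{C_{f_1}} \bTHH(E_2), \qquad X_2 \xrightarrow{f_2} \bTHH(E_2) \xrightarrow{C_{f_2}} \bbTHH(E_2)
\]
of \cref{MainThm} and using exactness yields cofiber sequences of $(E_2)^{\wedge}_p$-modules with cofiber maps $(C_{f_1})^{\wedge}_p$ and $(C_{f_2})^{\wedge}_p$. As suspension commutes with $p$-completion, the source of the second sequence is $(X_2)^{\wedge}_p \simeq \Sigma^{2p-1}(L_1 E_2)^{\wedge}_p$, matching the left column of the diagram. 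These are exactly the two vertical cofiber sequences displayed.

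\emph{Bottom equivalence.} Here I use \cref{rational cofiber}, which supplies the $K(1)_\ast$-isomorphism $\overline{Q}\colon X_3[u_1^{-1}] \to \bbTHH(E_2)[u_1^{-1}]$ with rational cofiber $C_{\overline{Q}}$, together with the localization map $\bbTHH(E_2) \to \bbTHH(E_2)[u_1^{-1}]$ whose cofiber $C_{\infty}$ is also rational. Since $(C_{\overline{Q}})^{\wedge}_p \simeq 0$ and $(C_{\infty})^{\wedge}_p \simeq 0$, both maps become equivalences after $p$-completion, and composing the zigzag
\[
(X_3[u_1^{-1}])^{\wedge}_p \xrightarrow[\ \simeq\ ]{(\overline{Q})^{\wedge}_p} (\bbTHH(E_2)[u_1^{-1}])^{\wedge}_p \xleftarrow{\ \simeq\ } \bbTHH(E_2)^{\wedge}_p
\]
gives an equivalence $(X_3[u_1^{-1}])^{\wedge}_p \simeq \bbTHH(E_2)^{\wedge}_p$. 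Since by construction
\[
X_3[u_1^{-1}] = \bigvee_{\alpha}\left(\Sigma^{|\alpha|}L_1E_2[u_1^{-1}] \bigvee \Sigma^{|\alpha|+2p-1}L_1E_2[u_1^{-1}]\right),
\]
this is precisely the bottom left term of the diagram, completing the proof.

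\emph{Expected obstacle.} I do not anticipate a serious obstacle, as the essential work is already done in \cref{rational cofiber}. The one point needing care is that $p$-completion need not commute with the infinite wedge or with the colimit defining $(-)[u_1^{-1}]$. This is harmless here: I never move $p$-completion past these operations, and instead identify the single spectrum $(X_3[u_1^{-1}])^{\wedge}_p$ as a whole via the vanishing of the two rational cofibers, so no termwise compatibility is required.
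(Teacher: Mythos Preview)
Your proposal is correct and follows essentially the same route as the paper: $p$-complete the diagram of \cref{MainThm} and \cref{rational cofiber}, use that $p$-completion preserves cofiber sequences and kills the rational cofibers $C_{\infty}$ and $C_{\overline{Q}}$, and then read off the bottom equivalence from the resulting zigzag. Your explicit remark about not needing $p$-completion to commute with the infinite wedge or with $(-)[u_1^{-1}]$ is a worthwhile clarification that the paper leaves implicit.
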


\begin{proof}
This is mostly just the $p$-completion of the diagram in \cref{MainThm}. The only thing different is the equivalence. We get this by taking the $p$-completion of \eqref{eq:rational cofiber}. Since $C_\infty$ and $C_{\overline{Q}}$ are rational, their $p$-completion is a point. We get a commutative square
\[\begin{tikzcd}
(\bigvee_{\alpha}(\Sigma^{|\alpha|}L_{1}E_{2} \vee \Sigma^{|\alpha|+2p-1}L_{1}E_{2}))_{p}^{\wedge} \arrow{d}{\widetilde{Q}_{p}^{\wedge}} \arrow{r}
& (\bigvee_{\alpha}(\Sigma^{|\alpha|}L_{1}E_{2}[u_{1}^{-1}] \vee \Sigma^{|\alpha|+2p-1}L_{1}E_{2}[u_{1}^{-1}]))_{p}^{\wedge} \arrow{d}{\overline{Q}_{p}^{\wedge}}[swap]{\simeq}\\
\bbTHH(E_2)_{p}^{\wedge} \arrow{r}{\simeq} & \bbTHH(E_2)[u_{1}^{-1}]_{p}^{\wedge}
\end{tikzcd}\]
giving us the required equivalence.
\end{proof}


\bibliographystyle{alpha}
\bibliography{mybib}

\end{document}